\documentclass[a4paper,10pt]{amsart}
\usepackage{graphicx}
\usepackage{amsmath,amssymb,xy,array}
\usepackage[T1]{fontenc}
\usepackage{amsfonts}
\usepackage{amsmath}
\usepackage{amssymb}
\usepackage{amsthm}
\usepackage{xcolor}
\usepackage[utf8]{inputenc}
\usepackage{hyperref}
\usepackage{fancyhdr}
\usepackage{tikz}
\usepackage{tikz-cd}
\usepackage{longtable}
\usepackage{geometry}
\usepackage{textcomp}
\usetikzlibrary{trees}
\usetikzlibrary{arrows}
\usepackage{multirow}
\usepackage{youngtab}
\usepackage{mathdots}

\setcounter{MaxMatrixCols}{30}

\input{xy}
\xyoption{all}
\setlength{\textheight}{210mm} 
\setlength{\topmargin}{0.46cm}
\setlength{\textwidth}{152mm} 
\setlength{\evensidemargin}{0.60cm}
\setlength{\oddsidemargin}{0.60cm}

\definecolor{gray0}{gray}{0.40}
\definecolor{gray1}{gray}{0.60}
\definecolor{gray2}{gray}{0.70}

\newcommand{\C}{\mathbb C}
\newcommand{\G}{\mathbb G}
\renewcommand{\P}{\mathbb P}
\newcommand{\R}{\mathbb R}

\newcommand{\Z}{\mathbb Z}
\DeclareMathOperator{\codim}{codim}
\newcommand{\rddots}{\reflectbox{$\ddots$}}

\newcommand{\Aut}{\operatorname{Aut}}
\DeclareMathOperator{\Bl}{Bl}

\DeclareMathOperator{\NE}{NE}

\DeclareMathOperator{\mult}{mult}

\DeclareMathOperator{\Exc}{Exc}

\DeclareMathOperator{\Eff}{Eff}
\DeclareMathOperator{\Nef}{Nef}
\DeclareMathOperator{\Mov}{Mov}
\DeclareMathOperator{\Pic}{Pic}

\DeclareMathOperator{\mov}{mov}

\renewcommand{\P}{\mathbb{P}}

\newtheorem{thm}{Theorem}[section]

\newtheorem{Question}[thm]{Question}
\newtheorem{Lemma}[thm]{Lemma}
\newtheorem{Proposition}[thm]{Proposition}

\newtheorem{Corollary}[thm]{Corollary}

\theoremstyle{definition}

\newtheorem{Definition}[thm]{Definition}
\newtheorem{Remark}[thm]{Remark}

\newtheorem{Example}[thm]{Example}
\newtheorem{Notation}[thm]{Notation}

\hypersetup{pdfpagemode=UseNone}
\hypersetup{pdfstartview=FitH}

\begin{document}

\title{Spherical blow-ups of Grassmannians and Mori Dream Spaces}

\author[Alex Massarenti]{Alex Massarenti}
\address{\sc Alex Massarenti\\
Universidade Federal Fluminense (UFF)\\
Rua M\'ario Santos Braga\\
24020-140, Niter\'oi, Rio de Janeiro\\ 
Brazil}
\email{alexmassarenti@id.uff.br}

\author[Rick Rischter]{Rick Rischter}
\address{\sc Rick Rischter\\
Universidade Federal de Itajub\'a (UNIFEI)\\ 
Av. BPS 1303, Bairro Pinheirinho\\ 
37500-903, Itajub\'a, Minas Gerais\\ 
Brazil}
\email{rischter@unifei.edu.br}

\date{\today}
\subjclass[2010]{Primary 14E05, 14L10, 14M15; Secondary 14J45, 14MXX}
\keywords{Mori dream spaces, weak Fano varieties, Spherical varieties, blow-ups, Grassmannians}

\begin{abstract}
In this paper we classify weak Fano varieties that can be obtained by blowing-up general points in prime Fano varieties. We also classify spherical blow-ups of Grassmannians in general points, and we compute their effective cone. These blow-ups are, in particular, Mori dream spaces. Furthermore, we compute the stable base locus decomposition of the blow-up of a Grassmannian in one point, and we show how it is determined by linear systems of hyperplanes containing the osculating spaces of the Grassmannian at the blown-up point, and by the rational normal curves in the Grassmannian passing through the blown-up point.
\end{abstract}

\maketitle 

\setcounter{tocdepth}{1}

\tableofcontents

\section{Introduction}
\textit{Mori dream spaces} were introduced by Y. Hu and S. Keel in \cite{HK00}. The birational geometry of a Mori dream space $X$ can be encoded in some finite data, namely its cone of effective divisors $\Eff(X)$ together with a chamber decomposition on it, called the \textit{Mori chamber decomposition} of $\Eff(X)$. We refer to Section \ref{sec1} and the references therein for the rigorous definition and special properties of Mori dream spaces.

Mori dream spaces can be algebraically characterized as varieties whose total coordinate ring, called the \textit{Cox ring}, is finitely generated. Cox rings of projective varieties have been studied in various contexts \cite{AHL10}, \cite{AL11}, \cite{AGL16}, \cite{CT06}, \cite{DHHKL15}, \cite{HKL16}, \cite{Mu01}. 

In addition to this algebraic characterization there are several algebraic varieties characterized by positivity properties of the anti-canonical divisor that turn out to be Mori dream spaces \cite[Corollary 1.3.2]{BCHM10}.

Mori dream spaces obtained by blowing-up points in projective spaces have been studied in a series of papers \cite{CT06}, \cite{Mu01}, \cite{AM16}, and their relationships with moduli spaces of pointed rational curves and of parabolic vector bundles on $\mathbb{P}^1$ have been investigated as well \cite{Ba91}, \cite{AC16}, \cite{BM17}. In this paper we consider more general prime Fano varieties.

Recall that a smooth non degenerate projective variety $X\subset \P^N$ is a \textit{prime Fano variety} of index $\iota_X$ if its Picard group is generated by the class $H$ of a hyperplane section and $-K_{X}=\iota_{X}H$ for some positive integer $\iota_{X}$. By the work of Mori \cite{Mo79} if $\iota_X > \frac{\dim(X)+1}{2}$ then $X$ is covered by lines.

\begin{Notation}
Given a variety $X\subset\mathbb{P}^N$ we denote by $\pi:X_k\rightarrow X$ its blow-up at $k$ general points $p_1,\dots,p_k\in X$, by $E_1,\dots,E_k$ the exceptional divisors, and by $e_i$ the class of a general line contained in $E_i,$ for $i=1,\dots,k$. Therefore, $\Pic(X_k)=\pi^*(\Pic(X))\oplus\Z[E_1]\oplus \cdots
\oplus \Z[E_k]$, and similarly the space of curves $N_1(X_k)$ is generated by the strict transform of curves in $X$ and by $e_1,\dots,e_k$. When $k=1$ we use $E$ and $e$ instead of $E_1$ and $e_1$.

Furthermore, we denote by $H$ the class of a general hyperplane section in $X$, and also the pull-back in $\Pic(X_k)$ of a general hyperplane section. Similarly, we denote by $h$ both the class of a general line in $X$ and its strict transform in $X_k$.
\end{Notation}

In Section \ref{wFpF}, under suitable hypothesis, we manage to describe the Mori cone of effective curves in $X_k$, and this leads us to a classification of weak Fano varieties that can be obtained by blowing-up general points in a prime Fano variety. The following theorem should be viewed as a generalization of the analogous result for projective spaces proved in \cite[Proposition 2.9]{BL12}.  

\begin{thm}\label{main1}
Let $X\subset\mathbb{P}^N$ be an $n$-dimensional prime Fano variety covered by lines, and let $X_k$ be the blow-up of $X$ at $k\geq 1$ general points. Then $X_k$ is Fano if and only if
\begin{itemize}
\item[-] $X\cong \P^n$ and either $n=2$ and $k\leq 8$, or $n\geq 3$ and $k = 1$;
\item[-] $X\cong Q^n\subset\mathbb{P}^{n+1}$ is a smooth quadric and either $n=2$ and $k\leq 7$, or $n\geq 3$ and $k\leq 2$. 
\end{itemize}
Furthermore, $X_k$ is weak Fano but not Fano if and only if 
\begin{itemize}
\item[-] $X\cong \P^3$ and $2 \leq k\leq 7$;
\item[-] $X\cong Q^3\subset\mathbb{P}^{4}$ is a smooth quadric and $3\leq k\leq 6$;
\item[-] $X\cong Y_3\subset\mathbb{P}^{n+1}$, with $n\geq 3$, is a smooth cubic hypersurface and $k\leq 2$;
\item[-] $X\cong Y_{2,2}\subset\mathbb{P}^{n+2}$, with $n\geq 3$, is a smooth complete intersection of two quadrics and $k\leq 3$;
\item[-] $X$ is a linear section of codimension $c\leq 3$ of $\mathbb{G}(1,4)\subset\mathbb{P}^9$, the Grassmannian of lines in $\mathbb{P}^4$, and $k\leq 4$.
\end{itemize}
\end{thm}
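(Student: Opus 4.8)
The plan is to turn "Fano'' and "weak Fano'' for $X_k$ into numerical conditions and read off from them both the admissible $X$ and the admissible $k$. Blowing up $k$ general points of the $n$-fold $X$, with $-K_X=\iota_X H$, gives
\[
-K_{X_k}=\iota_X H-(n-1)\sum_{i=1}^k E_i,
\]
with $H=\pi^*H$. As $X$ is covered by lines, a general point $p_1$ lies on a line of $X$ whose strict transform has effective class $h-e_1$, and $-K_{X_k}\cdot(h-e_1)=\iota_X-(n-1)$. Thus if $X_k$ is weak Fano then $\iota_X\ge n-1$, and if $X_k$ is Fano then $\iota_X\ge n$: $X$ has coindex $\le 2$, resp.\ $\le 1$. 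By the Kobayashi--Ochiai and Fujita classifications of Fano manifolds of coindex $\le 2$, a smooth nondegenerate $X\subset\P^N$ with $\Pic(X)=\Z H$, $H$ very ample, coindex $\le 2$ and covered by lines is exactly $\P^n$, a smooth quadric $Q^n$, a smooth cubic $Y_3$, a smooth $(2,2)$-complete intersection $Y_{2,2}$, or a smooth linear section of $\mathbb{G}(1,4)$ of codimension $\le 3$ (for $n=2$ this reduces to $\P^2$ and the degree-$8$ del Pezzo surface); the degree $1,2$ del Pezzo manifolds are excluded because there $H$ is not very ample, and the Mukai varieties because there $\iota_X=n-2<n-1$. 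This isolates the possible $X$.

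Next I would bound $k$ by a bigness computation. From $H^n=\deg X$, $H^a\cdot E_i^{\,n-a}=0$ for $0<n-a<n$, $E_i\cdot E_j=0$ for $i\ne j$ and $E_i^{\,n}=(-1)^{n-1}$ one gets
\[
(-K_{X_k})^n=\iota_X^{\,n}\deg(X)-k\,(n-1)^n.
\]
A nef divisor is big exactly when its top self-intersection is positive, so weak Fano forces $k<\iota_X^{\,n}\deg(X)/(n-1)^n$; substituting the five types gives precisely the upper bounds in the statement ($k\le 7$ for $\P^3$, $\le 6$ for $Q^3$, $\le 2$ for $Y_3$, $\le 3$ for $Y_{2,2}$, $\le 4$ for sections of $\mathbb{G}(1,4)$), while for $\P^n,Q^n$ with $n\ge 4$ it is complemented by the nefness analysis below, which restricts them to $k=1$, resp.\ $k\le 2$.

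The core of the argument is to decide, for each admissible pair, whether $-K_{X_k}$ is ample, merely nef and big, or neither. Every irreducible curve of $X_k$ is either contained in some $E_i$ (class a positive multiple of $e_i$, with $-K_{X_k}\cdot e_i=n-1\ge 0$) or the strict transform of a curve of $X$ of degree $d$ with multiplicities $a_i$ at the $p_i$ (class $dh-\sum a_ie_i$, with $-K_{X_k}\cdot(dh-\sum a_ie_i)=d\iota_X-(n-1)\sum a_i$). A dimension count on the Hilbert scheme of such curves through general points — equivalently a Cayley--Bacharach/interpolation estimate, using that the relevant curves have geometric genus zero — shows that an effective class of the second type must satisfy $d\iota_X\ge 2+\big(\sum a_i-1\big)(n-1)$, hence $-K_{X_k}\cdot(dh-\sum a_ie_i)\ge 3-n$. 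Therefore $-K_{X_k}$ is nef when $n\le 3$ and in all the del Pezzo-manifold cases (where $\iota_X=n-1$ forces $d\ge\sum a_i$), and it fails to be nef for $\P^n,Q^n$ with $n\ge 4$ once two, resp.\ three, points are blown up, since then the classes $h-e_1-e_2$ (for $\P^n$) and $2h-e_1-e_2-e_3$ (for $Q^n$) become effective with negative intersection $3-n$ against $-K_{X_k}$. When $-K_{X_k}$ is nef it is big by the previous step, so $\NEbar(X_k)$ is rational polyhedral; one then checks that it is spanned by the $e_i$ together with the finitely many strict transforms of minimal-degree rational curves through subsets of the points (for $\P^n$ this is \cite[Proposition~2.9]{BL12}, for $n=2$ it is the classical theory of del Pezzo surfaces, and for the remaining cases one argues analogously). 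Consequently $-K_{X_k}$ is ample exactly when all the corresponding intersection numbers are strictly positive; evaluating the critical ones — $\iota_X-2(n-1)$ on a line through two points of $\P^n$, $2\iota_X-3(n-1)$ on a conic through three points of $Q^n$, and $\iota_X-(n-1)=0$ on a line through one point whenever $\iota_X=n-1$ — reproduces precisely the two lists.

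The main obstacle is the last assertion: that $\NEbar(X_k)$ is generated by these "obvious'' low-degree curves. For $\P^n$ this is the cited result \cite[Proposition~2.9]{BL12}; for quadrics and for $Y_3,Y_{2,2}$ one needs the analogue, and the technically hardest case is the linear sections of $\mathbb{G}(1,4)$, where the geometry of lines and conics through general points of $X$ is less standard and must be analyzed directly — in particular one must show that two general points of such an $X$ are never collinear on $X$ and that no conic of $X$ passes through three general points, which is exactly what keeps the potentially negative classes $h-e_i-e_j$ and $2h-e_i-e_j-e_\ell$ out of $\Eff(X_k)$. The anticanonical arithmetic, the bigness bound and the surface cases are routine.
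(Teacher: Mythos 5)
Your outline matches the paper's strategy at the level of skeleton (the index bound $\iota_X\ge n-1$ from lines through a blown-up point, the Kobayashi--Ochiai/Fujita classification of the possible $X$, bigness via $(-K_{X_k})^n=\iota_X^n\deg(X)-k(n-1)^n$), but the decisive step -- nefness, i.e.\ control of \emph{all} effective curve classes through the general points -- rests on a claim that is both unproved and, as stated, false. You assert that any effective class $dh-\sum a_ie_i$ must satisfy $d\iota_X\ge 2+(\sum a_i-1)(n-1)$, justified by a ``dimension count on the Hilbert scheme \dots using that the relevant curves have geometric genus zero''. Nefness has to be tested against every irreducible curve, not only rational ones, and no reduction to genus zero is available here (the cone theorem only produces rational curves on the $K$-negative side). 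For curves of positive genus the inequality fails outright: a plane quartic through $13$ general points gives $d\iota_X=12<14=2+12(n-1)$ for $X=\P^2$; and even for rational curves the statement (especially with multiplicities $a_i\ge 2$) is a nontrivial interpolation bound whose proof you do not indicate -- deformation theory gives \emph{lower} bounds on dimensions of families, whereas you need that general points impose the expected number of conditions. Moreover your final paragraph expects to rule out conics through three general points, but on a quadric such conics exist (class $2h-e_i-e_j-e_l$); they are exactly the extra generators the paper must include in the Mori cone of $Q^n_k$.

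The paper closes precisely this gap by a different, elementary mechanism: if an irreducible curve of degree $d$ had $\sum m_i>d$ at the $k$ points, it would lie in the linear span $\Pi=\langle p_1,\dots,p_k\rangle$, and for $k\le\codim(X)+1$ the generalized trisecant lemma forces $\dim(\Pi\cap X)=0$, a contradiction; hence $\NE(X_k)=\langle e_i,\,h-e_i\rangle$ for curves of arbitrary genus (Lemma \ref{conecurvesgeneral}), with an explicit line-and-conic decomposition extending this for quadrics up to $k\approx 3n/2$ (Lemma \ref{conecurvesquadrics}). Since each del Pezzo manifold in the list has $\deg(X)=\codim(X)+2$, the range $k\le\codim(X)+1$ covered by this lemma coincides exactly with the range permitted by the bigness inequality, which is why no further case analysis of lines and conics on $Y_3$, $Y_{2,2}$ or the linear sections of $\G(1,4)$ is needed. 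Your proposal identifies this as ``the main obstacle'' but does not resolve it, so as written the proof of nefness (and hence of both lists in the theorem) is incomplete.
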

Most of the Fano varieties appearing in Theorem \ref{main1} are well-known. For instance, since the blow-up of $Q^2$ at $k$ general points is isomorphic to the blow-up of $\mathbb{P}^2$ at $k+1$ general points the cases $X \cong \mathbb{P}^2$ and $X\cong Q^2$ follow from the classical construction of del Pezzo surfaces as blow-ups of $\mathbb{P}^2$. We did not find the case $X\cong Q^n$ with $n\geq 3$ and $k\leq 2$ in the literature but we believe it is well-known to the experts. At the best of our knowledge, with the exception of the case $X\cong\mathbb{P}^3$ which has been treated in \cite[Proposition 2.9]{BL12}, all the other varieties in Theorem \ref{main1} are new examples of weak Fano varieties.   

In particular, Theorem \ref{main1} provides new examples of Mori dream spaces. We would like to mention that the nef cones of some of the blow-ups appearing in Theorem \ref{main1} have been computed in \cite[Theorem 4.1]{LTU15}.

Next we turn our attention to another class of projective varieties, namely spherical varieties. Given a reductive algebraic group $G$ and a Borel subgroup $B$, a \textit{spherical variety} is a $G$-variety with an open dense $B$-orbit. For instance flag varieties and toric varieties are spherical varieties. 
Spherical varieties are Mori dream spaces. This follows from the work of M. Brion \cite{Br93}, \cite{Br07}, and a more explicit proof can be found in \cite[Section 4]{Pe14}.

In Sections \ref{sGbu} and \ref{nsGbu} we concentrate on a particular prime Fano variety, namely the Grassmannian $\G(r,n)\subset\mathbb{P}^N$, parametrizing $r$-planes in $\mathbb{P}^n$, under its Pl\"ucker embedding. Exploiting the spherical nature of $\G(r,n)$ we manage to classify the blow-ups $\G(r,n)_k$ that are spherical. Furthermore, we compute their effective cones. 

Let us recall that the pseudo-effective cone $\overline{\Eff}(X)$ of a projective variety $X$ with $h^1(X,\mathcal{O}_X)=0$ can be decomposed into chambers depending on the stable
base locus of the corresponding linear series, see Section \ref{mcdsbl} for details. Such decomposition called \textit{stable base locus decomposition} in general is coarser than the Mori chamber decomposition. In the case $k=1$ we determine the stable base locus decomposition of $\Eff(\G(r,n)_1)$.

\begin{thm}\label{main2}
Let $\G(r,n)_k$ be the blow-up of the Grassmannian $\G(r,n)$ at $k$ general points, with $n\geq 2r+1$. The variety $\G(r,n)_k$ is spherical if and only if 
\begin{itemize}
\item[-] either $r = 0$ and $k\leq n+1$; or
\item[-] $k = 1$ and $r,n$ are arbitrary; or
\item[-] $k = 2$ and $r =1$, or $n\in\{2r+1,2r+2\}$; or
\item[-] $k = 3$ and $(r,n) = (1,5)$.
\end{itemize}
Furthermore, for the effective cones of the spherical blow-ups we have
\begin{itemize}
\item[-] $\Eff(\G(r,n)_1) = \left\langle E, H-(r+1)E\right\rangle$;
\item[-] $\Eff(\G(1,n)_2) = \left\langle E_1, E_2, H-2(E_1+E_2)\right\rangle$;
\item[-] $\Eff(\G(r,2r+1)_2) = \left\langle E_1, E_2, H-(r+1)E_1, H-(r+1)E_2\right\rangle$;
\item[-] $\Eff(\G(r,2r+2)_2) = \left\langle E_1, E_2, H-(r+1)E_1-E_2,H-E_1-(r+1)E_2\right\rangle$;
\item[-] $\Eff(\G(1,5)_3) = \left\langle E_1, E_2,E_3, H-2(E_1+E_2), H-2(E_1+E_3), H-2(E_2+E_3)\right\rangle$.
\end{itemize}
Moreover, the movable cone of $\G(r,n)_1$ is given by
$$\Mov(\G(r,n)_1)=\begin{cases}
\left\langle H,H-rE\right\rangle &\mbox{ if } n=2r+1\\
\left\langle H,H-(r+1)E\right\rangle &\mbox{ if } n>2r+1
\end{cases}.$$

The Mori cone of $\G(r,n)_1$ is given by $\NE(\G(r,n)_1)=\left\langle e,h-e\right\rangle$, while its cone of moving curve is $\mov(\G(r,n)_1) = \left\langle h,(r+1)h-e \right\rangle$. 

Finally, $\Nef(\G(r,n)_1)  =\left\langle H, H-E\right\rangle$ and the divisors $E,H,H-E,\dots,H-(r+1)E$ give the walls of the stable base locus decomposition of $\Eff(\G(r,n)_1)$ as represented in the following picture
\vspace{-0.2cm}
\begin{figure}[htb!]
\centering			
\resizebox{0.72\textwidth}{0.36\textwidth}{%
\begin{tikzpicture}[>=Stealth,scale=1.4]
\fill[gray0] (0,0) -- (1.0,0)
arc [start angle=0, end angle=90,
radius=1.0];
\fill[gray] (0,0) -- (1.2,0)
arc [start angle=0, end angle=-20, radius=1.2];
\fill[gray1] (0,0) -- (0.93*1.3,-0.34*1.3)
arc [start angle=-20, end angle=-40, radius=1.3];
\fill[gray2] (0,0) -- (0.5*1.5,-0.86*1.5)
arc [start angle=-60, end angle=-80, radius=1.5];
\draw[->][very thick] (0,0) -- (0,1.5)   node[left,very near end]{$E$};
\draw[->][very thick] (0,0) -- (3,0)   node[above,very near end]{$H$};
\draw (0.9,0.9) node[very thick,gray0]{$\mathcal{C}_{-1}$};
\draw[->][thick] (0,0) -- (0.93*2,-0.34*2) 
node[right,very thick]{$H-E$}
node[above right, pos=0.6, very thick,gray]{$\mathcal C_0$}; 
\draw[->][thick] (0,0) -- (0.76*2,-0.64*2) 
node[right,very thick]{$H-2E$}
node[above right, pos=0.7, very thick,gray1]{$\mathcal C_1$}; \draw (0.9,-1) node[very thick]{$\rddots$};
\draw[->][thick] (0,0) -- (0.5*2,-0.86*2)
node[right,very thick]{$H-rE$}
node[below left, pos=0.8, very thick,gray2]{$\mathcal C_r$};  
\draw[->][thick] (0,0) -- (0.17*2,-0.98*2)
node[below,very thick]{$H-(r+1)E$};
\draw (2.85,1.3) node[very thick]{$\mathcal{C}_{-1} =  [E,H)$,};
\draw (3.95,1) node[very thick]{$\mathcal{C}_0 = \Nef(\G(r,n)_1) = \left\langle H,H-E\right\rangle$,};
\draw (4.55,0.7) node[very thick]{$\mathcal{C}_i = (H-iE,H-(i+1)E]$ for $i = 1,\dots,r$,};
\end{tikzpicture}}
\end{figure}%
\vspace{-0.2cm}
where with the notation $\mathcal{C}_i = (H-iE,H-(i+1)E]$ we mean that the ray spanned by $H-(i+1)E$ belongs to $\mathcal{C}_i$ but the ray spanned by $H-iE$ does not, and similarly with the notation $\mathcal{C}_{-1} = [E,H)$ we mean that the ray spanned by $E$ belongs to $\mathcal{C}_{-1}$ but the ray spanned by $H$ does not.
\end{thm}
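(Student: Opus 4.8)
The plan is to split Theorem~\ref{main2} into four blocks---the classification of spherical blow-ups, the five effective cones, the Mori/nef/moving-curve cones of $\G(r,n)_1$, and the stable base locus decomposition of $\Eff(\G(r,n)_1)$---treated in this order, since sphericity gives finite generation of $\Eff$ and the numerical cones for $\G(r,n)_1$ drive the wall analysis. For the classification, the ``if'' direction is a sequence of explicit constructions of a connected reductive group $G'$ and a Borel $B'\subseteq G'$ acting on $\G(r,n)_k$ with a dense orbit (equivalently, with finitely many orbits). When $r=0$ one moves the $k\le n+1$ general points of $\P^n$ to coordinate points, so the blow-up is toric; when $k=1$ one blows up a single point $p$ of the rational homogeneous space $\G(r,n)=SL_{n+1}/P$ and takes $G'$ to be the Levi factor $L\cong S(GL_{r+1}\times GL_{n-r})$ of $P=\Stab(p)$, checking that the dense $L$-orbit on $\G(r,n)$ avoids $p$ (so survives the blow-up) and that a Borel subgroup of $L$ still has a dense orbit on $\G(r,n)_1$; the cases $k=2$ with $r=1$ or $n\in\{2r+1,2r+2\}$, and $k=3$ with $(r,n)=(1,5)$, are handled by taking $G'$ to be the reductive part of $\bigcap_i\Stab_{PGL_{n+1}}(p_i)$ and exploiting the special position of the relevant $r$-planes (two general lines, two general $r$-planes in $\P^{2r+1}$ or $\P^{2r+2}$, three general lines in $\P^5$) to verify finiteness of the $B'$-orbits. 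For the ``only if'' direction one uses that $\Aut^0(\G(r,n)_k)$, hence any admissible $G'$, is contained in $\bigcap_i\Stab_{PGL_{n+1}}(p_i)$, since a connected group fixes each of the rigid exceptional divisors $E_i$ and so descends to an automorphism of $\G(r,n)$ fixing all the $p_i$; computing the dimension of this stabilizer, and of a Borel of its reductive part, for $k$ general $r$-planes and comparing with $\dim\G(r,n)$ rules out most triples $(r,n,k)$, while the borderline ones are excluded by exhibiting a positive-dimensional family of $B'$-orbits. I expect this last step to be the main obstacle, since there the dimension count alone is inconclusive.

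For the effective cones I would argue by a two-sided estimate. For the lower bound one writes down explicit effective divisors hitting the listed generators: the $E_i$ are effective, and a class such as $H-(r+1)E$ is realized by the linear system of hyperplane sections of $\G(r,n)\subset\P^N$ singular to order $r+1$ at $p$, i.e.\ of hyperplanes containing the $r$-th osculating space $T^r_p\G(r,n)$; this rests on the identification of $T^i_p\G(r,n)$ with the linear span $\operatorname{span}(\sigma_i(p))$ of the Schubert variety $\sigma_i(p)=\{\Lambda:\dim(\Lambda\cap\Lambda_0)\ge r+1-i\}$, in particular on $T^r_p\G(r,n)$ being a proper subspace. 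The mixed generators---for instance $H-(r+1)E_1-E_2$ for $n=2r+2$, but only $H-(r+1)E_1$ and $H-(r+1)E_2$ for $n=2r+1$---arise similarly from hyperplanes containing $T^r_{p_1}$ and, depending on the dimension of $\operatorname{span}(T^r_{p_1},\Lambda_2)$ or $\operatorname{span}(T^r_{p_1},T^1_{p_2})$, also passing (or not) through the second point; this is where the numerical differences between $n=2r+1$, $n=2r+2$ and $(r,n)=(1,5)$ enter. For the upper bound one dualizes against movable curves: a dimension count on $\overline{M}_{0,2}(\G(r,n),r+1)$ shows that through two general points of $\G(r,n)$ there is a rational curve of degree $r+1$ with multiplicity one at the base point, so $(r+1)h-e$ (and its analogues with several $E_i$) is a movable curve class, and by the duality between the pseudo-effective cone and the cone of movable curves this confines $\overline{\Eff}(\G(r,n)_k)$ inside the claimed cone; together with finite generation from sphericity the two bounds coincide. (Alternatively, once $G'$ is chosen, $\overline{\Eff}$ is spanned by the finitely many $B'$-stable prime divisors, which can be listed directly.)

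For $\G(r,n)_1$ the Mori cone is $\NE(\G(r,n)_1)=\langle e,\,h-e\rangle$: $e$ is the class contracted by $\pi$, $h-e$ is the strict transform of a line of $\G(r,n)$ through $p$, and any other irreducible curve has degree $d\ge 1$ and multiplicity $m\le d$ at $p$, hence class $dh-me\in\langle h,h-e\rangle\subseteq\langle e,h-e\rangle$. Dualizing gives $\Nef(\G(r,n)_1)=\langle H,H-E\rangle$, and dualizing the effective cone $\langle E,H-(r+1)E\rangle$ gives $\mov(\G(r,n)_1)=\langle h,(r+1)h-e\rangle$.

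It remains to carry out the stable base locus decomposition of $\Eff(\G(r,n)_1)=\langle E,H-(r+1)E\rangle$, which is the master computation from which the movable cone also follows. Since $H-iE=(r+1-i)E+(H-(r+1)E)$, the rays $E,H,H-E,\dots,H-(r+1)E$ occur in this order along $\Eff(\G(r,n)_1)$, and one computes the stable base locus $\mathbf B(D)$ chamber by chamber. On $\mathcal C_0=\Nef$ the stable base locus is empty, since a nef divisor on a Mori dream space is semiample. On $\mathcal C_{-1}=[E,H)$ every effective representative contains $E$ while $|\pi^*H|$ is base-point free, so $\mathbf B=E$. On $\mathcal C_i=(H-iE,H-(i+1)E]$ with $1\le i\le r$, the system $|H-(i+1)E|$ is cut by the hyperplanes containing $T^i_p\G(r,n)=\operatorname{span}(\sigma_i(p))$; each such hyperplane contains $\sigma_i(p)$---which is precisely the union of the degree-$\le i$ rational normal curves of $\G(r,n)$ through $p$, namely the curves $C$ with $(H-(i+1)E)\cdot\widetilde{C}<0$---so $\mathbf B(H-(i+1)E)$ is the strict transform of $\sigma_i(p)$, the reverse inclusion coming from the fact that a point off $\sigma_i(p)$ does not lie in $T^i_p\G(r,n)$ and is therefore missed by some member of the system; and for $i=r+1$ the locus $\sigma_{r+1}(p)$ fills $\G(r,n)$, re-proving that $H-(r+2)E$ is not effective. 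Then $\Mov(\G(r,n)_1)$ is the union of the chambers on which $\mathbf B$ has codimension $\ge 2$: as $\codim\sigma_i(p)=(r+1-i)(n-r-i)$ first drops to $1$ at $i=r$ exactly when $n=2r+1$, this gives $\langle H,H-rE\rangle$ for $n=2r+1$ and $\langle H,H-(r+1)E\rangle$ for $n>2r+1$. Besides the sphericity step, the point here needing the most care---shared with the lower bound for $\Eff$---is the precise geometry of the osculating spaces of the Grassmannian and of the loci swept out by the rational normal curves through $p$.
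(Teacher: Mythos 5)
Your overall architecture tracks the paper's quite closely: the ``if'' direction of the classification by exhibiting dense Borel orbits for the reductive part of the stabilizer of the blown-up flags, the ``only if'' direction via descent of automorphisms to $\G(r,n)$ (Blanchard-type argument) followed by Borel-dimension counts and explicit stabilizer/orbit computations in the borderline cases (which you rightly flag as the heavy part -- in the paper this is all of Section 5), the Mori cone of $\G(r,n)_1$ via the multiplicity bound $m\le d$, $\Nef$ by duality, and the stable base locus decomposition via osculating projections, the identification of $\G(r,n)\cap T^m_p\G(r,n)$ with the Schubert variety $\sigma_m(p)$ swept out by degree $\le m$ rational normal curves through $p$, and the codimension formula giving the two shapes of $\Mov(\G(r,n)_1)$. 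All of that is essentially the paper's proof, modulo carrying out the deferred computations.

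The genuine gap is in your primary route to the effective cones. The two-sided bound works for $k=1$, where $\Eff(\G(r,n)_1)=\langle E,H-(r+1)E\rangle$ is exactly the dual of $\langle h,(r+1)h-e\rangle$ and both curve classes are visibly movable. But for the multi-point cases the extremal rays of the dual of the claimed cone are \emph{not} the ``analogues with several $E_i$'' of $(r+1)h-e$: for $\G(r,2r+1)_2$ you also need $(r+1)h-e_1-e_2$ to be (a limit of) movable classes, i.e.\ a covering family of degree-$(r+1)$ rational curves through \emph{both} points (this can be done via the Segre scroll $\P^1\times\P^r$ through three pairwise skew $r$-planes, but it must be proved); for $\G(r,2r+2)_2$ the needed ray is $(r+2)h-e_1-e_2$; and for $\G(1,5)_3$ the needed ray is $4h-e_1-e_2-e_3$, while the naive analogue $2h-e_i-e_j$ is not even movable there (conics through two of the points sweep out only a $\G(1,3)$, and $(2h-e_1-e_2)\cdot(H-2E_1-2E_2)=-2<0$). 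Your $\overline{M}_{0,2}$ dimension count only produces the one-point class, so as written the upper bound is unproven precisely in the cases where the cone shapes differ -- which is the content of the theorem. (``Finite generation from sphericity'' does not rescue this: it bounds nothing about the slopes.) The paper avoids the issue entirely: the same dense-orbit computations that prove sphericity exhibit the complement of the open $B$-orbit as an explicit union of $B$-invariant divisors, and by the spherical-variety fact (Proposition 4.5.4.4 of ADHL) these generate $\Eff$; the only extra input is the class computation $H-iE_1-jE_2$, etc., of those Schubert-type divisors on the blow-up. This is exactly your parenthetical fallback, and it is the route that actually closes the argument; if you keep the duality route instead, you must construct the covering curve families listed above (and, for the lower bounds, justify the existence of hyperplanes containing $T^r_{p_1}$ and passing through, or osculating at, the second point, which requires the dimension of $T^m_p\G(r,n)$ from the osculating-space literature).
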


Note that in the toric case $r = 0$ we recover the well known fact that $\Eff(\mathbb{P}^n_1) = \left\langle E,H-E\right\rangle$ with stable base locus decomposition given by $E,H,H-E$.

Furthermore, we show how the stable base locus decomposition of $\Eff(\G(r,n)_1)$ can be described in terms of linear systems of hyperplanes containing the osculating spaces of $\G(r,n)$ at the blown-up point. Recall that given a smooth point $p\in X\subset\mathbb{P}^N$, the \textit{$m$-osculating space} $T_p^{m}X$ of $X$ at $p$ is roughly speaking the smallest linear subspace locally approximating $X$ up to order $m$ at $x$, see Definition \ref{oscdef}. 

While doing this we show in Lemma \ref{lemmaBS} a result of independent interest.
We interpret the intersection $R_m:=\G(r,n)\cap T^m_p\G(r,n)$ as a Schubert variety and we show that $R_m$ is the subvariety of $\G(r,n)$ swept out by the degree $m$ rational normal curves in $\G(r,n)$ passing through $p\in\G(r,n)$. This generalizes the well-known fact that the tangent space $T_p\G(r,n)$ intersects $\G(r,n)$ exactly in the cone of lines in $\G(r,n)$ with vertex $p \in \G(r,n)$.

We would like to mention that J. Kopper, while describing the effective cycles on blow-ups of Grassmannians, independently computed the effective cone of $\G(r,n)_1$ and $\G(r,n)_2$ in \cite{Ko16}. 

Finally, we have the following result on Mori dream spaces obtained by blowing-up general points in a Grassmannian.

\begin{thm}\label{main3}
Let $\G(r,n)_k$ be the blow-up of the Grassmannian $\G(r,n)$ at $k$ general points. If one of the following occurs:
\begin{itemize}
\item[-] $r=0$ and $k\leq n+3$; 
\item[-] $k = 1$ and $r,n$ are arbitrary; 
\item[-] $k=2$, and one of the following conditions is satisfied:
\begin{itemize}
\item[-] $r = 1$ and $n$ is arbitrary,
\item[-] $r = 2$ and $n$ is arbitrary,
\item[-] $n = 2r+1$ and $r$ is arbitrary,
\item[-] $n = 2r+2$ and $r$ is arbitrary,
\end{itemize}
\item[-] $k=3$, and $r=1, n\geq 4$, or  $(r,n)=(2,8)$;
\item[-] $k=4$, and $(r,n)\in\{(1,4),(1,7)\}$,
\end{itemize}
then $\G(r,n)_k$ is a Mori dream space.
\end{thm}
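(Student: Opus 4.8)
The plan is to reduce every entry of the list to one of three known criteria and then to treat a handful of remaining cases by an explicit geometric reduction. The criteria are: (i) spherical varieties are Mori dream spaces, by Brion \cite{Br93,Br07} (see also \cite[Section~4]{Pe14}); (ii) weak Fano varieties are Mori dream spaces, by \cite[Corollary~1.3.2]{BCHM10}; (iii) $\P^n_k$ is a Mori dream space for all $k\le n+3$, by Mukai \cite{Mu01} and Castravet--Tevelev \cite{CT06}. Criterion (iii) is exactly the row $r=0$. By Theorem~\ref{main2} the following entries are spherical, hence settled by (i): $k=1$ for all $r,n$; $k=2$ with $r=1$, or with $n\in\{2r+1,2r+2\}$; and $(r,n)=(1,5)$, $k=3$. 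For $k=2$, $r=2$ the remaining small cases $n=3$ and $n=4$ reduce, through the isomorphisms $\G(2,3)\cong\P^3$ and $\G(2,4)\cong\G(1,4)$, to cases already covered, while $\G(1,4)_k$ is weak Fano for every $k\le 4$ by Theorem~\ref{main1} (it is the codimension-zero linear section of $\G(1,4)$), so (ii) applies to $\G(1,4)_3$ and $\G(1,4)_4$ as well. This leaves exactly the four families $\G(2,n)_2$ with $n\ge 7$, $\G(1,n)_3$ with $n\ge 6$, $\G(2,8)_3$, and $\G(1,7)_4$.

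For these I would exploit that the blown-up linear spaces all lie in a small coordinate subspace of $\P^n$. Two general $2$-planes, and likewise three general lines, span a fixed $\P^5$; so for $\G(2,n)_2$ ($n\ge 7$) and $\G(1,n)_3$ ($n\ge 6$) one projects $\P^n$ from a general linear subspace complementary to that $\P^5$. The induced dominant rational map $\G(r,n)_k\rat\G(r,5)_k$ fixes the blown-up planes, so its target is precisely the spherical blow-up ($\G(2,5)_2$, resp.\ $\G(1,5)_3$), and its general fibre is an affine space — the space of $r$-planes in the span of the projection centre and a fixed plane that are complementary to the centre, a torsor under $\Hom(\mathbb{C}^{r+1},\mathbb{C}^{n-5})$. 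For $\G(2,8)_3$ and $\G(1,7)_4$ the blown-up planes already span all of $\P^n$, so instead one projects from one of the blown-up planes themselves; this drops both the ambient dimension and the number of points by the expected amount, again landing on $\G(2,5)_2$, resp.\ $\G(1,5)_3$, but now with general fibre the Grassmannian $\G(2,5)$, resp.\ $\G(1,3)\cong Q^4$, and with an extra exceptional divisor over the plane used as projection centre. In every case the base is a known spherical Mori dream space, the generic fibre is a Grassmannian, a quadric, or an affine space — all Mori dream spaces — and the indeterminacy locus is of codimension at least two. The conclusion one aims for is that $\G(r,n)_k$ is the image of a surjective morphism from a relative Grassmannian (equivalently, from a tower of projective bundles) over the spherical base: such a source is a Mori dream space, since projective bundles over Mori dream spaces are, and then $\G(r,n)_k$ is a Mori dream space because, by a theorem of Okawa, the image of a Mori dream space under a morphism is again one.

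The main obstacle is this last step. The reductions above are \emph{not} isomorphisms in codimension one, and the blow-ups on the source and on the target of $\G(r,n)_k\rat\G(r,5)_k$ do not correspond under a small $\Q$-factorial modification — blowing up a point of the base spreads out over a positive-dimensional locus of the total space. So one cannot simply invoke $\Q$-factorial-modification invariance of the Mori dream property; one must genuinely construct, family by family, an \emph{honest} surjective morphism onto $\G(r,n)_k$ from an explicitly described projective-bundle tower over the spherical blow-up $\G(2,5)_2$ or $\G(1,5)_3$, correctly incorporating the exceptional divisor over the projection centre in the cases $\G(2,8)_3$ and $\G(1,7)_4$. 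These two — the only entries that are neither spherical, nor weak Fano, nor reducible in the ambient dimension — are where this hands-on construction is indispensable, and controlling it is the real content of the proof.
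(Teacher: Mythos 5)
Your reductions for the easy entries are fine and agree with the paper: $r=0$ via \cite{CT06}, the spherical cases of Theorem \ref{main2} via Brion, and $\G(1,4)_3$, $\G(1,4)_4$ via the weak Fano criterion of Theorem \ref{main1} together with \cite{BCHM10}; the identifications $\G(2,3)\cong\P^3$ and $\G(2,4)\cong\G(1,4)$ for the small $n$ in the row $k=2$, $r=2$ are also correct. The problem is that the four remaining families you isolate --- $\G(2,n)_2$ for $n\geq 7$, $\G(1,n)_3$ for $n\geq 6$, $\G(2,8)_3$ and $\G(1,7)_4$ --- are exactly the content of the theorem beyond what is already in Theorems \ref{main1} and \ref{main2}, and for these your argument is a plan, not a proof, as you yourself concede in the last paragraph. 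Two concrete obstructions remain open in your sketch. First, to apply Okawa's theorem you need a surjective \emph{morphism} from a Mori dream space onto the blow-up $\G(r,n)_k$; the natural incidence variety dominating $\G(r,n)$ is a Grassmann bundle of a bundle involving the tautological subbundle on $\G(r,5)$, which is not split, and the blanket statement that projective (or Grassmann) bundles over Mori dream spaces are again Mori dream spaces is false in general --- already projectivizations of toric vector bundles over toric varieties can fail to be Mori dream spaces --- so the MDS-ness of your proposed source is not automatic. Second, even granting that, the morphism must be lifted to the blow-ups: the preimage of a blown-up point is positive-dimensional, so the source you actually need is a blow-up of the bundle along a positive-dimensional centre, and its Mori dream property is again unestablished. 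So the cases that carry the real weight of Theorem \ref{main3} are not proved.

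The idea you are missing is the one the paper uses: Mori-dreamness here does not come from sphericity alone but from the stronger fact, recorded in Remark \ref{rem1} (after \cite{Br93} and \cite[Section 4]{Pe14}), that any normal $\mathbb{Q}$-factorial projective $G$-variety of \emph{complexity at most one} is a Mori dream space. The blow-up $\G(r,n)_k$ carries the action of the reductive part $G_k^{red}$ of the stabilizer of the $k$ points (Proposition \ref{automorphism prop} and Lemma \ref{Bkisenough}), and Section \ref{nsGbu} computes the codimension of a general Borel orbit by an explicit stabilizer calculation, showing it equals one precisely for $\G(2,n)_2$ with $n\geq 7$, $\G(1,n)_3$ with $n\geq 6$, $\G(2,8)_3$ and $\G(1,7)_4$ (Corollaries \ref{corollary k=2 c=1} and \ref{corollary k>=3 c=1}). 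With that in hand the proof of Theorem \ref{main3} is immediate, with no need for the projection-and-bundle construction you propose.
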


It would be interesting to have a classification of Mori dream spaces that can be obtained by blowing-up points in Grassmannians in the spirit of the main results for blow-ups of projective spaces in \cite{Mu01,CT06,AM16}. In this direction we would like to mention that J. Kopper proved in \cite[Theorem 6.5]{Ko16} that if the Segre–Harbourne–Gimigliano–Hirschowitz conjecture holds for ten very general points in $\mathbb{P}^2$, then $\Eff(\G(1,4)_6)$ is not finitely generated. Therefore, in particular $\G(1,4)_k$ is not a Mori dream space for $k\geq 6$.

\subsection*{Organization of the paper}
This paper is organized as follows. In Section \ref{sec1}, we introduce spherical varieties, weak Fano varieties, Mori dream spaces and collect some of their basic properties. In Section \ref{wFpF} we compute the Mori cones of effective curves of blow-ups of prime Fano varieties at general points and we prove Theorem \ref{main1}. In Sections \ref{sGbu} and \ref{nsGbu} we classify spherical blow-ups of Grassmannians at general points, compute their effective cones, and prove Theorem \ref{main3}. Finally, in Section \ref{mcdsbl} we compute the stable base locus decomposition of the effective cone of the blow-up of $\G(r,n)$ at a point completing the proof of Theorem \ref{main2}.

\subsection*{Acknowledgments}
We thank Carolina Araujo for useful discussion on the stable base locus decomposition of blow-ups of Grassmannians, and Antonio Laface for helpful comments on a preliminary version of the paper. Finally, we thank the referee for pointing out a gap in the first version of the paper, and for the careful reading that helped us to improve the exposition.

The first named author is a member of the Gruppo Nazionale per le Strutture Algebriche, Geometriche e le loro Applicazioni of the Istituto Nazionale di Alta Matematica "F. Severi" (GNSAGA-INDAM). The second named author would like to thank CNPq for the financial support.

\section{Spherical varieties, Mori dream spaces and weak Fano varieties}\label{sec1}
Throughout the paper $X$ will be a normal projective variety over an algebraically closed field of characteristic zero. We denote by $N^1(X)$ the real vector space of $\mathbb{R}$-Cartier divisors modulo numerical equivalence. 
The \emph{nef cone} of $X$ is the closed convex cone $\Nef(X)\subset N^1(X)$ generated by classes of 
nef divisors. 
The \emph{movable cone} of $X$ is the convex cone $\Mov(X)\subset N^1(X)$ generated by classes of 
\emph{movable divisors}. These are Cartier divisors whose stable base locus has codimension at least two in $X$.
The \emph{effective cone} of $X$ is the convex cone $\Eff(X)\subset N^1(X)$ generated by classes of 
\emph{effective divisors}. We have inclusions $\Nef(X)\ \subset \ \overline{\Mov(X)}\ \subset \ \overline{\Eff(X)}$. We refer to \cite[Chapter 1]{De01} for a comprehensive treatment of these topics. 

\subsection{Spherical varieties}
Recall that an algebraic group $G$ is solvable when it is solvable as an abstract group. A Borel subgroup $B$ of an algebraic group $G$ is a subgroup which is maximal among the connected solvable algebraic subgroups of $G$. The radical $R(G)$ of an algebraic group is the identity component of the intersection
of all Borel subgroups of $G$. We say that $G$ is semi-simple if $R(G)$ is trivial. We say that $G$ is reductive if the unipotent part of $R(G)$, i.e. the subgroup of unipotent elements of $R(G)$, is trivial.

Given an algebraic group $G$ there is a single conjugacy class of Borel subgroups. For instance, in the group $GL_n$ of $n\times n$ invertible matrices, the subgroup of invertible upper triangular matrices is a Borel subgroup. The radical of $GL_n$ is the subgroup of scalar matrices, therefore $GL_n$ is reductive but not semi-simple. On other hand, $SL_n$ is semi-simple.

\begin{Definition}
A \textit{spherical variety} is a normal variety $X$ together with an action of a connected reductive affine algebraic group $G$, a Borel subgroup $B\subset G$, and a base point $p_0\in X$ such that the $B$-orbit of $p_0$ in $X$ is a dense open subset of $X$. 

The \textit{complexity} $c(X)$ of a normal variety $X$ with an action of a connected reductive affine algebraic group $G$ is the minimal codimension in $X$ of an orbit of a Borel subgroup $B\subset G$. Therefore, a spherical variety is a normal $G$-variety of complexity zero.
\end{Definition}

\begin{Notation}
Throughout the paper we will always view the Grassmannian $\G(r,n)\subset\mathbb{P}^N$ of $r$-planes in $\mathbb{P}^n$, with $N = \binom{n+1}{r+1}-1$, as a projective variety in its Pl\"ucker embedding.
\end{Notation}

\begin{Example}\label{G(r,n) spherical}
Any toric variety is a spherical variety with $B=G$ equal to the torus. Consider $X:=\G(r,n), \ G:=SL_{n+1}$. Choose a complete flag $\{0\}= V_0\subset V_1\subset \cdots \subset V_{n+1}=\C^{n+1}$
of linear spaces in $\C^{n+1}$, with $V_{r+1}$ corresponding to a point $p\in \G(r,n).$ 
Let $B$ be the only Borel subgroup of $G$ that stabilizes this flag, and choose a basis 
$e_0,\dots, e_n$ of $\C^{n+1}$ such that $B$ is the subgroup of upper triangular matrices in this basis.
Consider the divisor $D=(p_{n-r,n-r+1,\dots,n}=0)$ and the point $p_0=[0:\dots :0:1]\in \G(r,n)\backslash D$. We have that $B\cdot p_0=\G(r,n)\backslash D$, and hence $(X,G,B,p_0)$ is a spherical variety.
\end{Example}

Next, we recall that the effective cone of a spherical variety can be described in terms of divisors which are invariant under the action of the Borel subgroup.

\begin{Definition}
Let $(X,G,B,p)$ be a spherical variety. We distinguish two types of $B$-invariant prime divisors:
\begin{itemize}
	\item[-] A \textit{boundary divisor} of $X$ is a $G$-invariant prime divisor on $X.$
	\item[-] A \textit{color} of $X$ is a $B$-invariant prime divisor that is not $G$-invariant.
\end{itemize}
\end{Definition}

For instance, for a toric variety there are no colors, and the boundary divisors are the usual toric invariant divisors. For a spherical variety we have to take into account the colors as well.

\begin{Proposition}\cite[Proposition 4.5.4.4]{ADHL15}\label{effcone spherical}
Let $(X,G,B,p_0)$ be a spherical variety.
\begin{itemize}
	\item[-] There are finitely many boundary divisors $E_1,\dots,E_r$ and finitely many colors $D_1,\dots,D_s$ on $X$. Furthermore, $X\backslash \ B\cdot p_0=E_1\cup\dots\cup E_r \cup D_1\cup\dots\cup D_s$.
	\item[-] The classes of the $E_k$'s and of the $D_i$'s generate $\Eff(X)\subset N^1(X)$ as a cone.
\end{itemize}
\end{Proposition}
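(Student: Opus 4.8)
The plan is to prove the two assertions separately. For the first, put $O:=B\cdot p_0$ for the dense open orbit and $Z:=X\setminus O$, which is a proper closed $B$-stable subset. The key point to establish is that $Z$ is of pure codimension one. I would obtain this from the fact that $O$ is affine: being a homogeneous space under the connected solvable group $B$, the orbit $O$ is isomorphic as a variety to $\mathbb{A}^a\times(\mathbb{G}_m)^b$ for suitable $a,b$ (Rosenlicht), hence affine; and by algebraic Hartogs a normal variety that contains an affine open subset whose complement has codimension at least two must coincide with that open subset, so no irreducible component of $Z$ can have codimension $\geq 2$. Since $X$ is Noetherian, $Z$ then has finitely many irreducible components, all of them divisors, and each is $B$-stable because the connected group $B$ cannot permute the finitely many components of a $B$-stable closed set. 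Calling the $G$-stable ones $E_1,\dots,E_r$ (the boundary divisors) and the remaining ones $D_1,\dots,D_s$ (the colors) yields both the finiteness statement and the identity $X\setminus B\cdot p_0=E_1\cup\cdots\cup E_r\cup D_1\cup\cdots\cup D_s$.

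For the second assertion, the inclusion $\left\langle [E_1],\dots,[E_r],[D_1],\dots,[D_s]\right\rangle\subseteq\Eff(X)$ is clear. For the reverse, since $\Eff(X)$ is generated by classes of prime divisors it suffices to treat a single prime divisor $F\subset X$, and the device I would use is that every effective divisor on $X$ is linearly equivalent to an effective $B$-stable one. To see this (taking $X$ complete, which is the case of interest), note that $\Pic(B)=0$, so $\mathcal{O}_X(F)$ carries a $B$-linearization and $V:=H^0(X,\mathcal{O}_X(F))$ is a nonzero finite-dimensional $B$-module; by the Lie--Kolchin theorem $V$ contains an eigenvector $s$ for $B$, and then $\operatorname{div}(s)\in|F|$ is $B$-stable, since $B$ scales $s$ and hence preserves its zero scheme. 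Now any $B$-stable prime divisor $W$ must be contained in $Z$: otherwise $W\cap O$ would be a nonempty proper $B$-stable closed subset of the single orbit $O$, which is impossible. Hence $\operatorname{div}(s)$ is a non-negative integral combination of the $E_k$'s and the $D_i$'s, so $[F]=[\operatorname{div}(s)]$ lies in $\left\langle [E_1],\dots,[E_r],[D_1],\dots,[D_s]\right\rangle$; this gives $\Eff(X)=\left\langle [E_1],\dots,[E_r],[D_1],\dots,[D_s]\right\rangle$, which in particular is closed and rational polyhedral.

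The step I expect to be the main obstacle is the purity of $Z$ in codimension one, which really packages two inputs: Rosenlicht's structure theorem for homogeneous spaces of connected solvable groups (giving that the open $B$-orbit is affine) and algebraic Hartogs; for non-complete $X$ one must also ensure that $H^0(X,\mathcal{O}_X(F))$ is finite-dimensional, which is dealt with by passing to a complete spherical embedding. The remaining ingredients, namely $\Pic(B)=0$ so that the linear system is a genuine $B$-representation, and Lie--Kolchin, are routine. As all of this is standard, in the paper we simply cite \cite[Proposition 4.5.4.4]{ADHL15}; the sketch above records the mechanism behind it.
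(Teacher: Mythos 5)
The paper does not prove this proposition at all; it is quoted directly from \cite[Proposition 4.5.4.4]{ADHL15}, and your sketch is a correct rendition of the standard argument behind that reference: Rosenlicht's affineness of the open $B$-orbit together with algebraic Hartogs gives that the complement is a finite union of prime divisors (each $B$-stable since $B$ is connected), and $B$-linearizability of line bundles (via $\Pic(B)=0$) plus Lie--Kolchin replaces any effective divisor by a linearly equivalent effective $B$-stable one, necessarily supported on that complement. The only step stated a bit loosely is the purity claim, where one should first discard the codimension-one components of the complement before applying Hartogs (or simply invoke the standard fact that the complement of a dense affine open subset of a normal variety is pure of codimension one); with that routine adjustment your proof matches the cited source's approach.
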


\subsection{Mori dream spaces and weak Fano varieties}
Let $X$ be a normal $\mathbb{Q}$-factorial variety. We say that a birational map  $f: X \dasharrow X'$ to a normal projective variety $X'$  is a \emph{birational contraction} if its
inverse does not contract any divisor. 
We say that it is a \emph{small $\mathbb{Q}$-factorial modification} 
if $X'$ is $\mathbb{Q}$-factorial  and $f$ is an isomorphism in codimension one.
If  $f: X \dasharrow X'$ is a small $\mathbb{Q}$-factorial modification, then 
the natural pullback map $f^*:N^1(X')\to N^1(X)$ sends $\Mov(X')$ and $\Eff(X')$
isomorphically onto $\Mov(X)$ and $\Eff(X)$, respectively.
In particular, we have $f^*(\Nef(X'))\subset \overline{\Mov(X)}$.

\begin{Definition}\label{def:MDS} 
A normal projective $\mathbb{Q}$-factorial variety $X$ is called a \emph{Mori dream space}
if the following conditions hold:
\begin{enumerate}
\item[-] $\Pic{(X)}$ is finitely generated, or equivalently $h^1(X,\mathcal{O}_X)=0$,
\item[-] $\Nef{(X)}$ is generated by the classes of finitely many semi-ample divisors,
\item[-] there is a finite collection of small $\mathbb{Q}$-factorial modifications
 $f_i: X \dasharrow X_i$, such that each $X_i$ satisfies the second condition above, and $
 \Mov{(X)} \ = \ \bigcup_i \  f_i^*(\Nef{(X_i)})$.
\end{enumerate}
\end{Definition}

The collection of all faces of all cones $f_i^*(\Nef{(X_i)})$ above forms a fan which is supported on $\Mov(X)$.
If two maximal cones of this fan, say $f_i^*(\Nef{(X_i)})$ and $f_j^*(\Nef{(X_j)})$, meet along a facet,
then there exist a normal projective variety $Y$, a small modification $\varphi:X_i\dasharrow X_j$, and $h_i:X_i\rightarrow Y$ and $h_j:X_j\rightarrow Y$ small birational morphisms of relative Picard number one such that $h_j\circ\varphi = h_i$. The fan structure on $\Mov(X)$ can be extended to a fan supported on $\Eff(X)$ as follows. 

\begin{Definition}\label{MCD}
Let $X$ be a Mori dream space.
We describe a fan structure on the effective cone $\Eff(X)$, called the \emph{Mori chamber decomposition}.
We refer to \cite[Proposition 1.11]{HK00} and \cite[Section 2.2]{Ok16} for details.
There are finitely many birational contractions from $X$ to Mori dream spaces, denoted by $g_i:X\dasharrow Y_i$.
The set $\Exc(g_i)$ of exceptional prime divisors of $g_i$ has cardinality $\rho(X/Y_i)=\rho(X)-\rho(Y_i)$.
The maximal cones $\mathcal{C}$ of the Mori chamber decomposition of $\Eff(X)$ are of the form: $\mathcal{C}_i \ = \left\langle g_i^*\big(\Nef(Y_i)\big) , \Exc(g_i) \right\rangle$. We call $\mathcal{C}_i$ or its interior $\mathcal{C}_i^{^\circ}$ a \emph{maximal chamber} of $\Eff(X)$.
\end{Definition}

There are varieties, characterized by positivity properties of the anti-canonical divisor, that turn out to be Mori dream spaces.

\begin{Definition}
Let $X$ be a smooth projective variety and $-K_X$ its anti-canonical divisor. We say that $X$ is \textit{Fano} if $-K_X$ is ample, and that $X$ is \textit{weak Fano} if $-K_X$ is nef and big.
\end{Definition}

By \cite[Corollary 1.3.2]{BCHM10} we have that weak Fano varieties are Mori dream spaces. In fact, there is a larger class of varieties called log Fano varieties, which includes weak Fano varieties and which are Mori dream spaces as well.

\begin{Remark}\label{rem1}
Our interest in spherical varieties comes from the fact that $\mathbb{Q}$-factorial spherical varieties are Mori dream spaces. This follows from the work of M. Brion \cite{Br93}. An alternative proof of this result can be found in \cite[Section 4]{Pe14}. More generally, by \cite[Section 4]{Pe14} we have that any normal $\mathbb{Q}$-factorial projective $G$-variety $X$ of complexity $c(X)\leq 1$ is a Mori dream space as well.
\end{Remark}

\section{Weak Fano blow-ups of prime Fano varieties}\label{wFpF}
In this section we determine which a blow-ups of prime Fano varieties at general points are Fano or weak Fano.

\begin{Lemma}\label{conecurvesgeneral}
Let $X\subset \P^N$ be a normal projective non degenerate variety of dimension $n\geq 2$, covered by lines and such that $N_1(X)=\R [h]$. If $k\leq \codim(X)+1$, then the Mori cone of $X_k$ is generated by $e_i,l_i, i=1,\dots,k$, where $l_i=h-e_i$ is the class of the strict transform of a general line through the blown-up point $p_i$.
\end{Lemma}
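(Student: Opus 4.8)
The plan is to prove both containments. One inclusion is essentially free: each $e_i$ is the class of a line in the exceptional divisor $E_i$, hence effective, and $l_i = h-e_i$ is the class of the strict transform of a line in $X$ through $p_i$ (such a line exists since $X$ is covered by lines and the $p_i$ are general), so it is effective as well. Thus $\langle e_1,\dots,e_k,l_1,\dots,l_k\rangle \subseteq \NE(X_k)$. The content is the reverse inclusion: every irreducible curve class in $X_k$ lies in this cone.

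For the reverse inclusion I would take an irreducible curve $C\subset X_k$, let $\pi(C)\subset X$ be its image, and write $[C] = d\,h - \sum_{i=1}^k m_i e_i$ in $N_1(X_k)$, where $d = \pi(C)\cdot H$ (using $N_1(X)=\R[h]$ and $h\cdot H=1$, after normalizing $H$ appropriately) and $m_i = \mult_{p_i}\pi(C)\geq 0$ is the multiplicity of $\pi(C)$ at $p_i$. If $C$ is contracted by $\pi$ it is a curve in some $E_i$, so $[C]$ is a nonnegative multiple of $e_i$ and we are done; so assume $\pi(C)$ is a curve of degree $d\geq 1$. The goal is to show $d\,h - \sum m_i e_i \in \langle e_i, l_i\rangle$, i.e. that we can write it as $\sum a_i(h-e_i) + \sum b_i e_i$ with $a_i,b_i\geq 0$; solving, this means finding $a_i\geq 0$ with $\sum a_i = d$ and $a_i \geq m_i$ for each $i$. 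Such $a_i$ exist precisely when $\sum_{i=1}^k m_i \leq d$. So the whole lemma reduces to the numerical claim: \emph{an irreducible curve of degree $d$ in $X\subset\P^N$ has $\sum_{i=1}^k \mult_{p_i}(\pi(C)) \leq d$} whenever the $p_i$ are $k\leq \codim(X)+1$ general points.

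The main obstacle is exactly this multiplicity estimate, and this is where the hypotheses $k\leq \codim(X)+1$ and the generality of the points enter. The idea: a general linear space $\Lambda$ of dimension $N - \codim(X) - 1 = \dim(X) - 1 +$ (something) — more precisely, choose $\Lambda$ to be a general linear subspace of $\P^N$ of dimension $N-k$ passing through the $k$ general points $p_1,\dots,p_k$; since $k\leq \codim(X)+1$, one expects $\Lambda\cap X$ to be finite (indeed the points $p_i$ are general on $X$, and $\dim\Lambda + \dim X = N - k + \dim X \leq N-1 < N$ would force... — care is needed here, so instead) one slices with a general hyperplane $\Lambda$ containing $p_1,\dots,p_k$ but, crucially, chosen so that $\Lambda\supset$ the span of the $p_i$ and $\Lambda$ meets $\pi(C)$ in a zero-dimensional scheme of length $\leq d\cdot(H\cdot[\text{line}])= d$ by Bézout, while this scheme contains each $p_i$ with multiplicity $\geq m_i$. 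The inequality $\sum m_i \leq \deg(\Lambda\cap \pi(C)) = d$ follows provided $\Lambda$ can be taken to contain all $k$ points and still cut $\pi(C)$ properly — which is possible exactly because $k-1 \leq \codim(X) = N - \dim X$, so a general hyperplane through $k$ general points of $X$ still restricts to a nondegenerate situation and does not contain $\pi(C)$. I would spell this out by an explicit dimension count: the linear system of hyperplanes through $p_1,\dots,p_k$ has dimension $N-k\geq \dim X - 1$ (using $k\leq N-\dim X +1$), so a general such hyperplane $\Lambda$ does not contain the irreducible curve $\pi(C)$ (nor any component of $X$), hence $\Lambda\cap\pi(C)$ is finite of degree $d$, and $\ell(\Lambda\cap\pi(C)) \geq \sum_i \mult_{p_i}\pi(C)$ since $p_i\in\Lambda\cap\pi(C)$. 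This gives $\sum m_i\leq d$, completing the proof. I would also note the edge case $d=0$ handled above and remark that when $X=\P^n$ this recovers the classical description of $\NE(\P^n_k)$ for $k\leq n+1$.
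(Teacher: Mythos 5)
Your reduction is the same as the paper's: reduce to showing $\sum_i m_i\le d$ for the image curve, and this inequality is indeed the whole content. The gap is in your justification of the key step ``a general hyperplane $\Lambda$ through $p_1,\dots,p_k$ does not contain $\pi(C)$.'' The base locus of the linear system of hyperplanes through the $k$ points is exactly their span $\Pi=\left\langle p_1,\dots,p_k\right\rangle$, so a general member fails to contain the irreducible curve $\pi(C)$ if and only if $\pi(C)\not\subset\Pi$; the dimension count $N-k\ge \dim X-1$ says nothing about this, since when $\pi(C)\subset\Pi$ \emph{every} hyperplane through the points contains $\pi(C)$ no matter how large the system is. Thus your Bézout argument silently excludes precisely the dangerous case, and that case is not vacuous: it is exactly where the hypothesis $k\le\codim(X)+1$ must be used, and the inequality genuinely fails beyond it (for a quadric $Q^n\subset\P^{n+1}$ and $k=3$ general points, the plane they span cuts $Q^n$ in a conic through the three points, giving $d=2<3=\sum m_i$; cf.\ Lemma \ref{conecurvesquadrics}).

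What is missing is an argument that for $k\le\codim(X)+1$ general points the span $\Pi$ meets $X$ in a finite set, so that no irreducible curve of $X$ can lie in $\Pi$. The paper does this by assuming $\sum m_i>d$, concluding $C\subset\Pi$ by Bézout, and then deriving a contradiction: for $k<\codim(X)+1$ via the generalized trisecant lemma \cite[Proposition 2.6]{CC02}, which gives $\Pi\cap X=\{p_1,\dots,p_k\}$, and for the boundary case $k=\codim(X)+1$ via a separate argument showing that a $(k-1)$-plane spanned by $k$ general points of $X$ is a general $(k-1)$-plane of $\P^N$ (using $\deg(X)\ge\codim(X)+1$ for nondegenerate $X$), hence meets $X$ in $\deg(X)$ points. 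Your proof needs this ingredient (or an equivalent one) to close the case $\pi(C)\subset\Pi$; also note, as a minor point, that the inequality $\ell(\Lambda\cap\pi(C))\ge\sum_i m_i$ should be justified by the fact that the local intersection multiplicity of a hyperplane with a curve at a point is at least the multiplicity of the curve there, not merely by $p_i\in\Lambda\cap\pi(C)$.
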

\begin{proof}
Let $\widetilde{C}\sim dh-m_1e_1-\cdots-m_ke_k$, with $d,m_1,\dots,m_k\in \Z$, be the class of an irreducible curve in $X_k$. If $\widetilde{C}$ is contracted by $\pi$, then $\widetilde{C}\sim ae_i$ with $a>0$ for some $i$. If $\widetilde{C}$ is mapped onto a curve $C\subset X\subset\mathbb{P}^N$, we  must have $d\geq m_i=\mult_{p_i}C\geq 0$ for any $i$. If $k=1$ we may write $c=ml+(d-m)h$ with $d-m$, $m\geq 0$ and we are done. Now, assume that $k\geq 2$. Note that it is enough to prove that $m_1+\cdots+m_k\leq d$. In fact, we could then write $\widetilde{C}\sim m_1l_1+\cdots + m_kl_k+(d-m_1-\cdots -m_k)h$, keeping in mind that $h=l_i+e_i$ for any $i$.

Now, assume by contradiction that $m_1+\cdots+m_k>d$, and consider $\Pi=\left\langle p_1\dots p_k\right\rangle\cong\P^{k-1}\subset \P^N$. Therefore, $C$ intersects $\Pi$ in at least $m_1+\cdots+m_k>d=\deg (C)$ points counted with multiplicity, and then $C\subset \Pi$. Now we distinguish two cases:
\begin{itemize}
\item[-] if $k< \codim(X)+1$ then $\dim(\Pi)+\dim(X)< N$, and the generalized Trisecant lemma \cite[Proposition 2.6]{CC02} yields that $\Pi\cap X = \{p_1,\dots,p_k\}$,
\item[-] if $k = \codim(X)+1$ then $\dim(\Pi)+\dim(X) = N$. Furthermore, since $X\subset\P^N$ is non degenerate $\deg(X)\geq \codim_{\P^N}(X)+1 = k$ and a general $(k-1)$-plane $\Pi$ of $\P^N$ intersects $X$ in $\deg(X)\geq k$ points. Since any subset of cardinality $k$ of the set of these $\deg(X)$ points generate $\Pi$ we get that the general $(k-1)$-plane in $\mathbb{P}^N$ is generated by $k$ general points of $X$. Therefore a $(k-1)$-plane $\Pi$ generated by $k$ general points of $X$ is general among the $(k-1)$-planes of $\P^N$, and hence such a $\Pi$ intersects $X$ in $\deg(X)$ points. In particular $\dim(\Pi\cap X) = 0$.
\end{itemize} 
In both cases we get a contradiction with $C\subset\Pi\cap X$.  
\end{proof}

We would like to mention that J. Kopper independently proved Lemma \ref{conecurvesgeneral} in \cite[Proposition 4.5]{Ko16} when $X = \G(r,n)\subset\mathbb{P}^N$ is a Grassmannian. He also extended the result to $\G(r,n)_k$ for $k\leq \codim(\G(r,n))+2$, and determined the effective cone of $2$-dimensional cycles for $\G(r,n)_k$, when $k\leq \codim(\G(r,n))+1$. Furthermore, note that the hypothesis $k\leq \codim(X)+1$ in Lemma \ref{conecurvesgeneral}, as the next result shows, is indeed necessary.

\begin{Lemma}\label{conecurvesquadrics}
Let $Q^n_k$ be the blow-up of a smooth quadric $Q^n\subset \P^{n+1}$ at $k\geq 3$ general points. Denote by $l_i$ the class $h-e_i$ of the strict transform of a general line through $p_i$, and by $c_{ijl}$ the class $2h-e_i-e_j-e_l$ of the strict transform of the conic through $p_i,p_j,p_l$.

Assume that $k\leq (3n+2)/2$ if $n$ is even, and that $k\leq (3n+3)/2$ if $n$ is odd. Then the Mori cone of $Q_k^n$ is generated by $e_i,l_i$ for $i=1,\dots,k$, and by $c_{ijl}$ for $1\leq i<j<l\leq k$.
\end{Lemma}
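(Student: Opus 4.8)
The plan is to mimic the structure of the proof of Lemma~\ref{conecurvesgeneral}, writing an arbitrary irreducible curve class as $\widetilde{C}\sim dh-\sum m_ie_i$ with $d\ge m_i\ge 0$, and showing it lies in the cone generated by the $e_i$, the $l_i$ and the $c_{ijl}$. The key combinatorial fact is that a nonnegative integer vector $(m_1,\dots,m_k)$ with $\sum m_i\le 2d$ and each $m_i\le d$ can be written as a nonnegative combination $\sum a_i e_i^{\vee}+\sum b_{ijl}(e_i^\vee+e_j^\vee+e_l^\vee)$ together with a leftover $(d-\cdots)h$; concretely, one greedily subtracts classes $c_{ijl}=2h-e_i-e_j-e_l$ (each of which costs $2$ in degree and $1$ in each of three multiplicities) to reduce to the situation $\sum m_i\le d$ handled by Lemma~\ref{conecurvesgeneral}. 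So the real content is to prove the \emph{inequality} $\sum_{i=1}^k m_i\le 2d$ for the class of any irreducible curve $C\subset Q^n$ that is not contracted by $\pi$.

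First I would dispose of the contracted case exactly as before: if $\widetilde{C}$ is $\pi$-exceptional then $\widetilde{C}\sim a e_i$, $a>0$. Then, assuming $C$ maps to an honest curve of degree $d$ in $Q^n\subset\P^{n+1}$ with $\mult_{p_i}C=m_i$, I would argue by contradiction: suppose $\sum m_i>2d$. Let $\Pi=\langle p_1,\dots,p_k\rangle\cong\P^{k-1}$. Since $C$ meets $\Pi$ in at least $\sum m_i$ points counted with multiplicity and $\sum m_i>2d>\deg C$, we get $C\subset\Pi$, hence $C\subset \Pi\cap Q^n$. The dimension bookkeeping is the point where quadrics differ from the general-codimension case: here $\codim Q^n=1$, so $\Pi$ is a linear space of dimension $k-1$ with $k$ as large as roughly $3n/2$, and $\Pi\cap Q^n$ is a (possibly very positive-dimensional) quadric in $\Pi$. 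So the Trisecant lemma is useless; instead I must show that the $k$ general points $p_1,\dots,p_k$ of $Q^n$ are in \emph{general position} in their span $\Pi\cong\P^{k-1}$, i.e. that the general quadric $\Pi\cap Q^n$ in $\P^{k-1}$ does not contain a curve passing through these $k$ general points with total multiplicity $>2d$.

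The cleanest route for that last point: the intersection $Q':=\Pi\cap Q^n$ is a quadric hypersurface in $\P^{k-1}$, and for general $p_i$ it is smooth of dimension $k-2$ (this needs $k-1\le n+1$, i.e. $k\le n+2$; for larger $k$ one must instead note that $\Pi$ is not contained in $Q^n$ as long as $Q^n$ is not a cone, so $Q'$ is still a genuine quadric and the $p_i$ impose the expected conditions). Then apply to $Q'\subset\P^{k-1}$ the same trisecant-type reasoning, or more simply: a curve in a smooth quadric $Q'$ of degree $d$ through points of multiplicity $m_i$ satisfies, by projecting from a general point of $\Pi$ to $\P^{k-2}$ or by intersecting with a general hyperplane through $k-2$ of the points, the bound $\sum m_i\le 2d$ because two general hyperplane sections of $Q'$ meet $C$ in $2d$ points and a $\P^{k-3}$ through $k-2$ of the points can be made to lie in such a section — so the multiplicities at $p_1,\dots,p_k$ can be "charged" against $2d$. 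I expect the numerology $k\le (3n+2)/2$ (resp. $(3n+3)/2$) to enter exactly here, guaranteeing that $\lfloor k/ (\text{something})\rfloor$ worth of points can be absorbed into the quadric sections; pinning down that inequality precisely, and handling the borderline case where $\Pi\cap Q^n$ drops dimension or where $Q^n$ could be a cone through the chosen points, is the main obstacle. Everything after the inequality $\sum m_i\le 2d$ is the routine greedy decomposition into $e_i$, $l_i=h-e_i$ and $c_{ijl}=2h-e_i-e_j-e_l$, together with $h=l_i+e_i$, which I would write out in a sentence or two paralleling the end of the proof of Lemma~\ref{conecurvesgeneral}.
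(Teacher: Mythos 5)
Your reduction has a genuine gap, and it sits in the step you treat as "routine": the combinatorial claim is false. You propose to prove only the inequality $\sum_i m_i\le 2d$ (together with $m_i\le d$) and then to decompose greedily into $e_i$, $l_i$, $c_{ijl}$. But classes satisfying these constraints need not lie in the cone $\left\langle e_i,l_i,c_{ijl}\right\rangle$: for instance $h-e_1-e_2$ (so $d=1$, $m_1=m_2=1$) satisfies them, yet $H-E_1-E_2$ has nonnegative intersection with every proposed generator while $(H-E_1-E_2)\cdot(h-e_1-e_2)=-1$; likewise $2h-e_1-e_2-e_3-e_4$ is outside the cone, and your greedy subtraction of $c_{123}$ from it leaves $-e_4$. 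In fact every class in $\left\langle e_i,l_i,c_{ijl}\right\rangle$ satisfies the much stronger bound $\sum_i m_i\le\tfrac{3}{2}d$, because $3H-2(E_1+\cdots+E_k)$ pairs nonnegatively with all the generators; so no argument whose only quantitative output is $\sum m_i\le 2d$ can possibly prove the lemma. Cone membership requires finer inequalities (at the very least $m_i+m_j\le d$ for every pair, and conditions on larger subsets of the points), and this is exactly the information your reduction discards. On top of this, your geometric argument for $\sum m_i\le 2d$ is itself left open, as you acknowledge: once $C\subset\Pi\cap Q^n$, that intersection is a large quadric containing plenty of curves through the $p_i$, and no trisecant-type statement is available to rule them out.

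The paper's proof takes a different, more elementary route that avoids both problems. After reducing to the case where all multiplicities are positive and sorting them increasingly, it writes down an explicit decomposition in which the points are grouped into triples, each triple contributing $m_{3i-2}\,c_{3i-2,3i-1,3i}$ plus the lines $l_{3i-1},l_{3i}$ for the excess multiplicity (with a small variant when $n$ is even), so that the only thing left to check is nonnegativity of the coefficient of $h$, namely $d-\sum_i(m_{3i-1}+m_{3i})\ge 0$. This is proved by contradiction: if it failed, $C$ would be contained in the span $\Pi$ of the corresponding $2n'$ (resp.\ $2n'+1$) points, which under the stated bound on $k$ is only a $\P^{n}\subset\P^{n+1}$; by generality $p_1\notin\Pi$, hence $m_1=0$, contradicting the reduction $m_1>0$. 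Note that no analysis of curves lying on $\Pi\cap Q^n$ is needed—the contradiction comes purely from $p_1\notin\Pi$—and the numerology $k\le(3n+2)/2$, $(3n+3)/2$ enters precisely to make the triple grouping and this hyperplane count work, not through any bound of the form $\sum m_i\le 2d$. To repair your approach you would essentially have to establish the full family of inequalities dual to the asserted generators, which amounts to reproducing such an explicit decomposition.
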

\begin{proof}
Note that given three general points $p_i,p_j,p_l$ the plane generated by them
intersects the quadric $Q^n$ in a conic whose strict transform under 
the blow-up has class $c_{ijl}=2h-e_i-e_j-e_l$. Clearly, it is enough to prove the result
for $k= (3n+2)/2$ if $n$ is even, and for $k= (3n+3)/2$ if $n$ is odd. We use the same notation of Lemma \ref{conecurvesgeneral}. Let $C$ be an effective curve in $Q^n_k$. 

If $C\sim dh$ then we may write $C\sim dl_1+de_1$. Otherwise, if $C\sim dh-m_ie_i-\dots -m_ke_k$ where $m_k\geq \dots \geq m_i > 0=m_{i-1}=\dots=m_1$ for some $i\in\{2,\dots,k\}$, then we may write $C\sim (dh -e_1-\dots -e_{i-1}-m_ie_1-\dots -m_ke_k)+e_1+\dots e_{i-1}$. Therefore, we may assume that $C\sim dh-m_1e_1-\dots -m_ke_k$ with $d\geq m_k\geq\dots\geq m_1 >0$.  

If $n=2n'-1$ is odd write
\begin{align*}
\widetilde{C}&\sim \sum _{i=1}^{n'}\Big[
m_{3i-2} c_{3i-2,3i-1,3i} +
(m_{3i-1}-m_{3i-2}) l_{3i-1}+(m_{3i}-m_{3i-2}) l_{3i} \Big]+\\
&+\left(d-\sum_{i=1}^{n'}(m_{3i-1}+m_{3i})\right)h.
\end{align*}

To conclude it is enough to prove that $d-(\sum_{i=1}^{n'}(m_{3i-1}+m_{3i}))\geq 0$. Assume by contradiction that $\sum_{i=1}^{n'}(m_{3i-1}+m_{3i})>d$. Then $C$ is contained in $\Pi=\left\langle p_2,p_3,\cdots, p_{3n'-1},p_{3n'}\right\rangle\cong\P^{2n'-1}=\P^n$. Since the points are in general position, $p_1\notin \Pi$ and $m_1=0$. A contradiction.

Now, consider the case $n=2n'$ even, and write
\begin{align*}
\widetilde{C}&\sim \sum _{i=1}^{n'-1}\Big[m_{3i-2}
 c_{3i-2,3i-1,3i} +
(m_{3i-1}-m_{3i-2}) l_{3i-1}+(m_{3i}-m_{3i-2}) l_{3i} \Big]\\
&+m_{3n'-2}c_{3n'-2,3n'-1,3n'}+(m_{3n'-1}-m_{3n'-2})c_{3n'-1,3n',3n'+1}+\\
&+(m_{3n'}-m_{3n'-1})l_{3n'}+(m_{3n'+1}-m_{3n'-1}+m_{3n'-2})l_{3n'+1}+\\
&+\left(d-\sum_{i=0}^{n'-1}(m_{3i-1}+m_{3i})-(m_{3n'-2}+m_{3n'}+m_{3n'+1})\right)h.
\end{align*}
In this case it is enough to prove that $d-\sum_{i=1}^{n'-1}(m_{3i-1}+m_{3i})-(m_{3n'-2}+m_{3n'}+m_{3n'+1})\geq 0$. Assume by contradiction that $\sum_{i=1}^{n'-1}(m_{3i-1}+m_{3i})+(m_{3n'-2}+m_{3n'}+m_{3n'+1}))>d$. Then $C$ is contained in $\Pi = \left\langle p_2, p_3, \dots, p_{3n'-4}, p_{3n'-3}, p_{3n'-2}, p_{3n'},p_{3n'+1}\right\rangle\cong\P^{2n'}=\P^n$. Again, since the points are in general position we have $p_1\notin \Pi$ and $m_1=0$. A Contradiction.
\end{proof}

In what follows we determine when the blow-up of a smooth quadric and the blow-up of a Grassmannian at general points are Fano or weak Fano.  

\begin{Proposition}\label{Q_k^n weak Fano}
Let $Q^n_k$ be the blow-up of a smooth quadric $Q^n\subset \P^{n+1}$ at $k\geq 1$ general points. Then
\begin{itemize}
 \item[-]$Q_k^n$ is Fano if and only if either $k\leq 2$ or $n=2$ and $k\leq 7$.
 \item[-]$Q_k^n$ is weak Fano if and only if one of the following holds:
\begin{itemize}
\item[-] $n=2$ and $k\leq 7$;
\item[-] $n=3$ and $k\leq 6$;
\item[-] $n\geq 4$ and $k\leq 2$.
\end{itemize}
\end{itemize}
\end{Proposition}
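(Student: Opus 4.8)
The plan is to compute the anticanonical class of $Q^n_k$ explicitly and then test its positivity against the extremal curve classes supplied by Lemmas~\ref{conecurvesgeneral} and~\ref{conecurvesquadrics}. Since $Q^n\subset\mathbb P^{n+1}$ has $-K_{Q^n}=nH$, blowing up $k$ general points gives $-K_{Q^n_k}=nH-(n-1)(E_1+\cdots+E_k)$. Ampleness (resp. nef-and-bigness) of this divisor will be checked by pairing with the generators of $\NE(Q^n_k)$: the classes $e_i$, the line classes $l_i=h-e_i$, and, when $k\geq 3$, the conic classes $c_{ijl}=2h-e_i-e_j-e_l$. One has $-K_{Q^n_k}\cdot e_i=n-1>0$ always; $-K_{Q^n_k}\cdot l_i = n-(n-1)=1>0$ always (using $H\cdot h=1$, $E_i\cdot h=0$, and $E_i\cdot e_i=-1$); and $-K_{Q^n_k}\cdot c_{ijl}=2n-3(n-1)=3-n$, which is positive iff $n=2$, zero iff $n=3$, negative iff $n\geq 4$. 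This already pinpoints the dichotomy in the statement, provided the range of $k$ is small enough that Lemmas~\ref{conecurvesgeneral}/\ref{conecurvesquadrics} actually describe the full Mori cone.

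The argument then splits by $n$. For $n=2$: $Q^2_k$ is the blow-up of $\mathbb P^2$ at $k+1$ general points, and the statement reduces to the classical fact that a del Pezzo surface of this type is Fano exactly when $k+1\leq 8$, i.e. $k\leq 7$; here Fano and weak Fano coincide in this range, and for $k\geq 8$ one has a $(-1)$ or worse configuration making $-K$ non-nef (alternatively, just cite the del Pezzo classification). For $n=3$: by Lemma~\ref{conecurvesquadrics} the Mori cone is generated by the $e_i$, $l_i$ and $c_{ijl}$ as long as $k\leq (3\cdot3+3)/2=6$, so on that range $-K_{Q^3_k}$ is nef (it is $\geq 0$ on all generators, with equality only on the conic classes), and it is big since $(-K_{Q^3_k})^3=3^3\cdot 2 - 2^3 k=54-8k>0$ for $k\leq 6$; it is not ample for $k\geq 3$ because it is zero on $c_{123}$, and it is ample for $k\leq 2$ because then Lemma~\ref{conecurvesgeneral} (note $\codim(Q^3)=1$, so only $k\leq 2$ is covered) gives $\NE$ generated by $e_i,l_i$ on which $-K$ is strictly positive. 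For $k\geq 7$ one must separately exhibit an effective curve on which $-K_{Q^3_k}$ is negative, destroying nefness — the natural candidate is the strict transform of a rational normal cubic (or a suitable conic/twisted cubic) through many of the points, and a dimension count shows such a curve exists for $k\geq 7$. For $n\geq 4$: as soon as $k\geq 3$, $-K_{Q^n_k}\cdot c_{123}=3-n<0$, so $Q^n_k$ is not even weak Fano; and for $k\leq 2$, Lemma~\ref{conecurvesgeneral} gives $\NE$ generated by $e_i,l_i$, on which $-K$ is strictly positive, so $Q^n_k$ is Fano — in particular weak Fano — for $k\leq 2$.

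The main obstacle is the "only if" direction in the boundary cases not covered by the cone lemmas: for $n=3$, $k\geq 7$, one needs to produce an explicit effective curve class $C=dh-\sum m_ie_i$ with $-K_{Q^3_k}\cdot C=3d-2\sum m_i<0$, i.e. $\sum m_i>\tfrac32 d$, realized by an actual irreducible curve in $Q^3_k$ through the general points. For $d=2$ this needs $\sum m_i\geq 3$ with at least one point of multiplicity $2$, or more simply a conic through three points has $\sum m_i=3>3$? — no, $3=\tfrac32\cdot 2$, giving equality; so one must go to higher degree, e.g. a rational normal quartic curve on $Q^3$ through $k\geq 7$ points ($d=4$, seven simple points, $\sum m_i=7>6$), whose existence follows from a parameter count for rational curves of degree $4$ on a threefold quadric. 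Packaging this existence statement cleanly — ideally by referencing the line/conic covering structure and a standard bend-and-break or deformation argument, rather than an ad hoc construction — is the delicate point; everything else is the bookkeeping of intersection numbers already set up above.
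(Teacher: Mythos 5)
Your computations of $-K_{Q^n_k}$ and its intersections with $e_i$, $l_i=h-e_i$ and $c_{ijl}=2h-e_i-e_j-e_l$, the reduction of $n=2$ to del Pezzo surfaces via $Q^2_k\cong\P^2_{k+1}$, the Fano statement for $k\leq 2$ via Lemma \ref{conecurvesgeneral} (using $\codim Q^n=1$), the exclusion of $n\geq 4$, $k\geq 3$ via the conics, and the treatment of $n=3$, $3\leq k\leq 6$ via Lemma \ref{conecurvesquadrics} together with $(-K_{Q^3_k})^3=54-8k$ all coincide with the paper's proof. The genuine gap is the case $n=3$, $k\geq 7$, which you acknowledge leaving open and propose to settle by exhibiting a low-degree rational curve through the points with negative anticanonical degree. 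That repair does not work as stated: the family of rational curves of degree $d$ on $Q^3$ has dimension $-K_{Q^3}\cdot C=3d$ (so $12$ for quartics, $9$ for twisted cubics), while passing through a general point imposes $2$ conditions, so there is no rational quartic through $7$ general points ($14>12$) and no twisted cubic through $5$ general points ($10>9$); conics through three points only give the boundary value $0$, as you noted. So the explicit curves you name do not exist, and a bend-and-break or deformation argument would have to produce a more complicated (higher degree, multiple-point) curve, which is exactly the delicate step you flag.

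The paper avoids this entirely by a numerical argument you already have in hand: weak Fano means $-K$ is nef \emph{and} big, and for a nef divisor bigness is equivalent to positive top self-intersection (\cite[Section 2.2]{La04}); in particular weak Fano forces $(-K_{Q^3_k})^3>0$. Since you computed $(-K_{Q^3_k})^3=54-8k$, which is negative for $k\geq 7$, the variety $Q^3_k$ cannot be weak Fano in that range (indeed $-K$ cannot even be nef, since a nef divisor has non-negative top self-intersection) --- no curve construction is needed. This is the same mechanism as Lemma \ref{wFdegree} in the paper. With that observation substituted for your last paragraph, your argument matches the paper's proof; as written, the $n=3$, $k\geq 7$ direction remains unproven.
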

\begin{proof}
For $n=2$ the result follows from the identification $\P^2_2\cong Q^2_1$ and from the classification of del Pezzo surfaces. Assume $n\geq 3$, since $-K_{Q_k^n}=nH-(n-1)(E_1+\cdots+E_k)$ we have
$$-K_{Q_k^n}\cdot e_i= n-1>0 \mbox{ and }
-K_{Q_k^n}\cdot l_i=n-(n-1)=1>0.$$
Then by Lemma \ref{conecurvesgeneral} $-K_{Q_k^n}$ is ample for $k\leq 2$, and $Q_1^n$, $Q_2^n$ are Fano.

Now, assume $k\geq 3$ and observe that
$$-K_{Q_k^n}\cdot c_{ijl}\!=\!(nH-(n-1)(E_1+\cdots+E_k))\cdot(2h-e_i-e_j-e_l)\!=\!2n-3(n-1)=3-n.$$
Hence $-K_{Q_k^n}$ is not nef and $Q_k^n$ is not weak Fano for $n\geq 4$ and $k\geq 3$.

Finally, assume $n=3$. Note that Lemma \ref{conecurvesquadrics} gives the Mori cone of $Q_k^3$ for $k\leq 6$, and we have that $-K_{Q_k^3}\cdot c_{ijl}=0$. Therefore $-K_{Q_k^3}$ is nef but not ample for $3\leq k\leq 6$. Note that $H^3=\deg(Q^3)=2$ and
$$   (-K_{Q_k^3})^3=(3H-2(E_1+\dots+E_k))^3=3^3\cdot 2-2^3\cdot k=
54-8k>0 \Leftrightarrow k\leq 6.  $$
Therefore, by \cite[Section 2.2]{La04} $Q_k^3$ is weak Fano if and only if $k\leq 6$.
\end{proof}

\begin{Lemma}\label{wFdegree}
Let $X\subset \P^N$ be a prime Fano variety of dimension $n\geq 2$ and degree $d\geq 2$. If $k\geq \frac{d\iota_{X}^n}{(n-1)^n}$ then $X_k$ is not weak Fano. Furthermore, if $X$ is covered by lines and $k\leq \codim(X)+1$ then $X_k$ is weak Fano if and only if $k< \frac{d\iota_{X}^n}{(n-1)^n}$ and $\iota_X\geq n-1$.
\end{Lemma}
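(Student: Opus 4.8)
The plan is to reduce the statement to a single top self-intersection computation together with the description of the Mori cone supplied by Lemma~\ref{conecurvesgeneral}. First I would record the canonical class of the blow-up: since $X$ is smooth and $p_1,\dots,p_k$ are distinct smooth points, $K_{X_k}=\pi^*K_X+(n-1)\sum_{i=1}^k E_i$, so that
\[
-K_{X_k}=\iota_X H-(n-1)(E_1+\cdots+E_k).
\]
Using $H^n=\deg(X)=d$, the orthogonality relations $H\cdot E_i=0$ and $E_i\cdot E_j=0$ for $i\neq j$, and $E_i^n=(-1)^{n-1}$, I would then obtain
\[
(-K_{X_k})^n=d\iota_X^n-(n-1)^n k.
\]

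For the first assertion, observe that if $k\geq d\iota_X^n/(n-1)^n$ then $(-K_{X_k})^n\leq 0$. Since a nef divisor on an $n$-dimensional projective variety is big if and only if its top self-intersection is strictly positive (see \cite[Section~2.2]{La04}), the class $-K_{X_k}$ cannot be simultaneously nef and big, and hence $X_k$ is not weak Fano. This part is immediate once the self-intersection is known, and does not use that $X$ is covered by lines.

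For the second assertion I would use that a prime Fano variety has Picard rank one, hence $N_1(X)=\mathbb{R}[h]$, so that Lemma~\ref{conecurvesgeneral} applies as soon as $k\leq\codim(X)+1$: the Mori cone $\NE(X_k)$ is then generated by the classes $e_i$ and $l_i=h-e_i$, $i=1,\dots,k$. A direct intersection computation gives $-K_{X_k}\cdot e_i=n-1>0$ and $-K_{X_k}\cdot l_i=\iota_X-(n-1)$; therefore $-K_{X_k}$ is nef if and only if $\iota_X\geq n-1$. Combining this with the bigness criterion above, $X_k$ is weak Fano exactly when $-K_{X_k}$ is both nef and big, i.e.\ when $\iota_X\geq n-1$ (nefness) and $(-K_{X_k})^n>0$, i.e.\ $k<d\iota_X^n/(n-1)^n$ (bigness, given nefness), which is the claimed equivalence.

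All the computations involved are routine; the only points deserving care are the correct invocation of the ``nef and big $\Longleftrightarrow$ positive top self-intersection'' criterion and the verification that the hypotheses of Lemma~\ref{conecurvesgeneral} hold — which is precisely why the bound $k\leq\codim(X)+1$ enters the second half of the statement. I do not anticipate a genuine obstacle.
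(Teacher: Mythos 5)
Your argument is correct and coincides with the paper's own proof: both reduce the first claim to the computation $(-K_{X_k})^n=d\iota_X^n-k(n-1)^n$ together with the criterion that a nef divisor is big if and only if its top self-intersection is positive, and both obtain the second claim by pairing $-K_{X_k}$ with the generators $e_i$ and $l_i=h-e_i$ of the Mori cone furnished by Lemma~\ref{conecurvesgeneral}. The only (welcome) additions are your explicit verification of $E_i^n=(-1)^{n-1}$ and of the hypothesis $N_1(X)=\mathbb{R}[h]$, which the paper leaves implicit.
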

\begin{proof}
The anti-canonical divisor of $X_k$ is given by
$$-K_{X_k}= \iota_{X}H-(n-1)(E_1+\dots +E_k).$$
Assume that $X_k$ is weak Fano. Then $-K_{X_k}$ is nef, and since it is also big \cite[Section 2.2]{La04} implies that $(-K_{X_k})^n = d\iota_X^n-k(n-1)^n > 0$.

Now, assume that $X$ is covered by lines and $k\leq \codim(X)+1$. Since $(-K_{X_k})\cdot e_i = n-1 >0$, and $(-K_{X_k})\cdot l_i = \iota_{X}-(n-1)$ Lemma \ref{conecurvesgeneral} yields that $-K_{X_k}$ is nef if and only if $\iota_X\geq n-1$. Furthermore, since $(-K_{X_k})^n = d\iota^n-k(n-1)^n,$ then by \cite[Section 2.2]{La04} $-K_{X_k}$ is nef and big if and only if $k< \frac{d\iota_{X}^n}{(n-1)^n}$ and $\iota_X\geq n-1.$
\end{proof}

Next, we consider the case of Grassmannians. 

\begin{Proposition}\label{G(r,n)_k weak Fano}
Let $\G(r,n)_k$ be the blow-up of the Grassmannian $\G(r,n)$ at $k\geq 1$ general points. Then
\begin{itemize}
\item[-] $\G(r,n)_k$ is Fano if and only if $(r,n)=(1,3)$ and $k\leq 2$.
\item[-] $\G(r,n)_k$ is weak Fano if and only if one of the following holds:
\begin{itemize}
\item[-] $(r,n)=(1,3)$ and $k\leq 2$;
\item[-] $(r,n)=(1,4)$ and $k\leq 4$.
\end{itemize}
\end{itemize}
\end{Proposition}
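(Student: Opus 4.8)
The plan is to first extract a sharp numerical constraint from the nefness of the anti-canonical divisor, which will leave only two Grassmannians to examine, and then to dispatch those two cases one by one --- one via an identification with a quadric already treated, the other via the volume estimate of Lemma~\ref{wFdegree}. Set $\delta:=\dim\G(r,n)=(r+1)(n-r)$, and recall that $\G(r,n)$ is a prime Fano variety of index $n+1$ which is covered by lines. By the isomorphism $\G(r,n)\cong\G(n-r-1,n)$ we may assume $n\ge 2r+1$, and throughout we take $r\ge 1$ (the case $r=0$ being that of projective space). Since
$$-K_{\G(r,n)_k}\;=\;(n+1)H-(\delta-1)(E_1+\dots+E_k)$$
and the strict transform of a general line through $p_i$ is an effective curve of class $l_i=h-e_i$, we get $-K_{\G(r,n)_k}\cdot l_i=(n+1)-(\delta-1)=n+2-\delta$. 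Hence $-K_{\G(r,n)_k}$ can be nef only if $\delta\le n+2$, i.e.\ only if $(r+1)(n-r)\le n+2$; whenever this fails, $\G(r,n)_k$ is neither weak Fano nor Fano, for every $k\ge 1$.

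Next I would solve the inequality $(r+1)(n-r)\le n+2$ under $r\ge 1$, $n\ge 2r+1$. Rewriting it as $n\le r+1+\tfrac 2r$: for $r=1$ this forces $n\in\{3,4\}$, while for $r\ge 2$ one has $r+1+\tfrac 2r\le r+2<2r+1$, contradicting $n\ge 2r+1$. Thus only $\G(1,3)$ and $\G(1,4)$ can have weak Fano blow-ups, and it remains to decide which ones.

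For $(r,n)=(1,3)$ I would invoke the classical identification of $\G(1,3)$ with the Klein quadric, $\G(1,3)\cong Q^4\subset\P^5$, so that $\G(1,3)_k\cong Q^4_k$; Proposition~\ref{Q_k^n weak Fano} in the range $n=4$ then gives at once that $\G(1,3)_k$ is Fano if and only if $k\le 2$, and weak Fano if and only if $k\le 2$.

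For $(r,n)=(1,4)$ one has $\delta=6$, index $n+1=5$, $\codim_{\P^9}\G(1,4)=9-6=3$, and $\deg\G(1,4)=5$; in particular the index equals $\delta-1=5$ and the threshold of Lemma~\ref{wFdegree} is $\deg\G(1,4)\cdot(n+1)^{\delta}/(\delta-1)^{\delta}=5\cdot 5^{6}/5^{6}=5$. By the first part of Lemma~\ref{wFdegree}, $\G(1,4)_k$ is not weak Fano for $k\ge 5$; and since $\G(1,4)$ is covered by lines with $\codim+1=4$, the second part of Lemma~\ref{wFdegree} applies for all $1\le k\le 4$ and shows that $\G(1,4)_k$ is weak Fano exactly for $1\le k\le 4$. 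Finally $-K_{\G(1,4)_k}\cdot l_i=5-5=0$, so $-K_{\G(1,4)_k}$ is never ample and $\G(1,4)_k$ is never Fano. The only delicate point of the whole argument is this last piece of bookkeeping: the volume threshold equals exactly $5$ while the converse direction of Lemma~\ref{wFdegree} is available only up to $k=\codim+1=4$, so it is precisely because these two bounds coincide that no value of $k$ is left unresolved --- and for this reason one must be careful to use the correct index ($n+1$) and degree ($5$) of $\G(1,4)$, the entire case analysis resting on the inequality $(r+1)(n-r)\le n+2$.
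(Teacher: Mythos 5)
Your proposal is correct and follows essentially the same route as the paper: the intersection of $-K$ with the strict transform of a line through a blown-up point eliminates all cases except $\G(1,3)$ and $\G(1,4)$ (your single inequality $n\le r+1+\tfrac2r$ is just a repackaging of the paper's split into $r=1$ and $r\ge 2$), after which $\G(1,3)_k$ is handled via the quadric identification and Proposition \ref{Q_k^n weak Fano}, and $\G(1,4)_k$ via Lemma \ref{wFdegree} with $\deg=5$, $\iota=5$, $\dim=6$, $\codim=3$. Your closing remark that the volume threshold $5$ and the bound $k\le\codim+1=4$ mesh exactly is a correct and welcome clarification of what the paper leaves implicit.
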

\begin{proof}
We have $-K_{\G(r,n)_1}=(n+1)H-((r+1)(n-r)-1)E$ and
$$\begin{cases}
-K_{\G(r,n)_1}\cdot e= (r+1)(n-r)-1 >0; \\
-K_{\G(r,n)_1}\cdot l= (n+1)- ((r+1)(n-r)-1)=-nr+r^2+r+2.
\end{cases}$$

If $r=1$, then $-K_{\G(r,n)_1}\cdot l=-n+4$, and therefore by Lemma \ref{conecurvesgeneral}, $-K_{\G(r,n)_1}$ is nef if and only if $n\leq 4$, and ample if and only if $n=3$. Thus $\G(r,n)_1$ is not weak Fano for $n\geq 5$ and it is not Fano for $n\geq 4$.
If $r\geq 2$, we have $-K_{\G(r,n)_1}\cdot l\leq -(2r+1)r+r^2+r+2=-r^2+2<0$. Thus $\G(r,n)_1$ is not weak Fano for any $r\geq 2$.

Now, we are left with $\G(1,3)_k$ and $\G(1,4)_k$. Since $\G(1,3)\subset\mathbb{P}^5$ is a quadric the statement for $\G(1,3)_k$ follows from Lemma \ref{Q_k^n weak Fano}.
To conclude that $\G(1,4)_k$ is weak Fano if and only if $k\leq 4$ it is enough to recall that $\deg(\G(1,4))=5$, $\codim_{\mathbb{P}^9}(\G(1,4))=3$ and to apply Lemma \ref{wFdegree}.
\end{proof}

Finally, we are ready to prove Theorem \ref{main1}. 

\begin{proof}[{Proof of Theorem \ref{main1}}]
Let $X\subset \P^N$ be an $n$-dimensional prime Fano variety of index $\iota_X$. Then $-K_{X_k} = \iota_{X}H-(n-1)(E_1+\dots +E_k)$, $-K_{X_k}\cdot l_i = \iota_{X}-(n-1)$, and $X_k$ weak Fano forces $\iota_{X}\geq n-1$. Fano varieties of dimension $n$ and index $\iota_{X}\geq n-2$ have been classified. By \cite{KO73}, $\iota_{X}\leq n+1$, and equality  holds if and only if $X\cong \P^n$. Moreover, $\iota_{X}= n$ if and only if $X$ is a quadric hypersurface $Q^n\subset \P^{n+1}$. Now, our statement for $X\cong\mathbb{P}^n$ follows from \cite[Proposition 2.9]{BL12}, and for $X\cong Q^n$ from Proposition \ref{Q_k^n weak Fano}. 

Now, recall that Fano varieties with index $\iota_X= n-1$ are called \emph{del Pezzo} manifolds, and were classified in \cite{fuj82a} and  \cite{fuj82b}. The prime Fano ones are isomorphic to one of the following.
\begin{itemize}
	\item[-] A cubic hypersurface $Y_3\subset\P^{n+1}$ with $n\geq 3$.
	\item[-] An intersection of two quadric hypersurfaces in $Y_{2,2}\subset\P^{n+2}$ with $n\geq 3$.
	\item[-] A linear section of codimension $c\leq 3$ of the Grassmannian $\mathbb{G}(1,4)\subset\P^9$ under the Pl\"ucker embedding.
\end{itemize}
Now, to conclude it is enough to apply Lemma \ref{wFdegree} noticing that $\deg(Y_3)=3$, $\deg(Y_{2,2})=4$, and that any linear section of codimension $0\leq c \leq 3$ 
of $\mathbb{G}(1,4)\subset\mathbb{P}^9$ has degree $\deg(\mathbb{G}(1,4)) = 5$.
\end{proof}

\section{Spherical Grassmannian blow-ups}\label{sGbu}
In this section we study the spherical blow-ups of Grassmannians at general points listed in Theorem \ref{main2}, and compute their cones of effective divisors.

The common strategy of the proofs in this section consists in choosing a reductive group, a Borel subgroup, a point $q\in\G(r,n)$, and divisors $D_1,\dots,D_m$ in $\G(r,n)$ in such a way that the Borel orbit of $q$ in $\G(r,n)$ is the complement of $D_1\cup\dots\cup D_m.$ 

\begin{Proposition}\label{G(r,n)1 spherical}
$\G(r,n)_1$ is a spherical variety and 
$$\Eff(\G(r,n)_1)= \left\langle E,H-(r+1)E\right\rangle.$$
\end{Proposition}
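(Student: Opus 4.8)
The plan is to realize $\G(r,n)_1$ as a spherical variety for the group $G = P$, where $P \subset SL_{n+1}$ is the parabolic subgroup stabilizing the flag-datum of the blown-up point, and then to read off the effective cone from Proposition \ref{effcone spherical}. Concretely, I would choose a basis $e_0,\dots,e_n$ of $\C^{n+1}$ so that the blown-up point $p \in \G(r,n)$ corresponds to the coordinate subspace $V_{r+1} = \langle e_0,\dots,e_r\rangle$, let $P$ be the stabilizer of $V_{r+1}$ in $SL_{n+1}$ (a maximal parabolic), and let $B \subset P$ be a Borel subgroup. Since $P$ fixes $p$, it acts on the blow-up $\G(r,n)_1$, lifting the $P$-action on $\G(r,n)$; and $P$ is reductive only up to its unipotent radical, so strictly one works with a Levi factor or simply invokes Remark \ref{rem1} / the complexity formalism. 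The key point is that $B$ already acts with a dense orbit on $\G(r,n)$ itself (indeed $SL_{n+1}$ is acting, $\G(r,n)$ is a flag variety hence spherical, and the dense $B$-orbit can be arranged to be the open Schubert cell), and the blow-up at the $B$-fixed point $p$ does not destroy this: the exceptional divisor $E \cong \P(T_p\G(r,n))$ carries a $B$-action, and $B$ has a dense orbit on it because $T_p\G(r,n) \cong \Hom(V_{r+1}, \C^{n+1}/V_{r+1})$ as a $B$-representation has a dense orbit in its projectivization (this is the content that needs checking — a highest-weight-orbit computation). Granting that, the dense $B$-orbit on $\G(r,n) \setminus \{p\}$ pulls back to a dense $B$-orbit on $\G(r,n)_1$, so $\G(r,n)_1$ is spherical.

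Next, to compute $\Eff(\G(r,n)_1)$ I would identify the $B$-invariant prime divisors. One is the exceptional divisor $E$, which is $G$-invariant (a boundary divisor). The others are strict transforms of the $B$-invariant prime divisors of $\G(r,n)$; for the flag variety $\G(r,n)$ with this Borel these are the Schubert divisors, and there is a distinguished one — call it $D$ — whose class is $H$ and which passes through $p$ with multiplicity $r+1$ along the blow-up, so its strict transform has class $H-(r+1)E$. The multiplicity claim is exactly $\mathrm{mult}_p D = r+1$, which I would verify by writing $D$ in Plücker coordinates as $(p_{n-r,\dots,n} = 0)$ as in Example \ref{G(r,n) spherical}, passing to the affine chart centered at $p$ where the Plücker coordinate $p_{0,\dots,r}$ is nonzero, and observing that in the local coordinates on $\G(r,n)$ near $p$ (the entries of an $(r+1)\times(n-r)$ matrix) the polynomial $p_{n-r,\dots,n}$ is, up to units, the determinant of an $(r+1)\times(r+1)$ submatrix, hence vanishes to order exactly $r+1$ at the origin. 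By Proposition \ref{effcone spherical} the classes of $E$ and of the strict transforms of the $B$-invariant divisors of $\G(r,n)$ generate $\Eff(\G(r,n)_1)$; and any $B$-invariant divisor other than $D$ that does not contain $p$ has strict transform of class $H$, which lies in the cone $\langle E, H-(r+1)E\rangle$ since $H = \tfrac{1}{r+1}\big((H-(r+1)E) + (r+1)E\big)$. Hence $\Eff(\G(r,n)_1) \subseteq \langle E, H-(r+1)E \rangle$.

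For the reverse inclusion, $E$ is effective and $H-(r+1)E$ is effective (it is the class of the strict transform of $D$), so both generators lie in $\Eff(\G(r,n)_1)$; thus the cone is exactly $\langle E, H-(r+1)E\rangle$. I expect the main obstacle to be the spherical verification — specifically, checking cleanly that $B$ has a dense orbit on the exceptional $\P(T_p\G(r,n))$, equivalently that the $B$-action on $\Hom(V_{r+1},\C^{n+1}/V_{r+1})$ (with $B$ acting through its quotient to the Levi of the parabolic, i.e.\ via $GL_{r+1}\times GL_{n-r}$ restricted to upper-triangular matrices) has a dense orbit on the projectivization. This is a finite computation with highest weights: one exhibits an explicit matrix in $\Hom(V_{r+1},\C^{n+1}/V_{r+1})$ whose $B$-orbit is dense, e.g.\ a ``generic anti-diagonal'' rank-one-at-the-corner matrix, and checks the orbit-dimension count $\dim B - \dim \mathrm{Stab} = \dim \P(T_p\G(r,n))$. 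Once sphericity is in hand, the cone computation is routine via the two multiplicity facts above and Proposition \ref{effcone spherical}.
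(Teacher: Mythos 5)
There is a genuine gap in the sphericity argument: you establish (or sketch) density for the wrong Borel subgroup. The group $SL_{n+1}$ does not act on $\G(r,n)_1$; the only groups that do are subgroups of the stabilizer $P$ of $p$, and the reductive group one must use is (a Levi of) $P$, namely $L\cong\{(M',M'')\in GL_{r+1}\times GL_{n-r}:\det(M')\det(M'')=1\}$, whose Borel $B_1$ consists of pairs of upper-triangular blocks. Your dense orbit on $\G(r,n)\setminus\{p\}$ is the open Schubert cell, i.e.\ an orbit of the full Borel $B\subset SL_{n+1}$; but $B=B_1\ltimes U$ where $U$ is the unipotent radical of $P$, of dimension $(r+1)(n-r)$, so a dense $B$-orbit does not give a dense $B_1$-orbit, and $B$ itself is not the Borel subgroup of any reductive group acting on the blow-up. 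Hence the conclusion ``so $\G(r,n)_1$ is spherical'' does not follow from what you prove. The missing statement --- that $B_1$ has a dense orbit on $\G(r,n)$, equivalently on $\G(r,n)_1\setminus E$ --- is precisely the nontrivial content of the paper's proof, which exhibits a point $p_0$ with $B_1\cdot p_0=\G(r,n)\setminus(D_0\cup\dots\cup D_{r+1})$ by an explicit matrix computation. (Your check that $B$ has a dense orbit on the exceptional divisor is, by contrast, not needed: sphericity only requires a dense Borel orbit on the variety itself, which automatically lies in the open part.)

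The same mismatch undermines the cone computation: Proposition \ref{effcone spherical} requires the colors to be taken with respect to the Borel of the reductive group that actually acts, i.e.\ $B_1$, and there are $r+2$ of them (the strict transforms of $D_0,\dots,D_{r+1}$, of classes $H,H-E,\dots,H-(r+1)E$), not just the single $B$-invariant Schubert divisor you use. Your local computation that the Schubert divisor $(p_{n-r,\dots,n}=0)$ has multiplicity exactly $r+1$ at $p$ (the determinant of an $(r+1)\times(r+1)$ block in the affine chart) is correct and identifies the extremal class $H-(r+1)E$, and the final cone happens to agree because $H-jE\in\left\langle E,H-(r+1)E\right\rangle$ for $j\leq r$; but as written the application of Proposition \ref{effcone spherical} is not licensed until sphericity with respect to $(L,B_1)$ is actually proved.
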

\begin{proof}
In the same notation of Example \ref{G(r,n) spherical} consider $G_1:=\{g\in G;\ g\cdot p = p\}$, where $p\in \G(r,n)$ is the blown-up point. Note that $G_1$ is the set of matrices with the $(n-r)\times (r+1)$ left down block equal to zero.
This algebraic group $G_1$ is not reductive because its unipotent radical is the normal subgroup $U_1$ of matrices with the two diagonal blocks equal to the identity.
The quotient $G_1^{red}=G_1/U_1$ can be identified with the set of matrices in 
$SL_{n+1}$ with non-zero entries only in the two diagonal blocks, and the two non diagonal blocks zero.
We have an isomorphism
$$G_1^{red}\cong\{M=(M',M'')\in GL_{r+1}\times GL_{n-r};\ \det(M')\det(M'')=1\}.$$
Therefore, $G_1^{red}$ is reductive and acts on $\G(r,n)_1$.
Consider the Borel subgroup $B_1\subset G_1^{red}$ of matrices with upper triangular blocks. These groups are of the following forms:
$$
G_1\!=\!\left\{
\begin{pmatrix}
	A & B \\ 0 & C
\end{pmatrix}\right\},
U_1\!=\!\left\{
\begin{pmatrix}
	Id & B \\ 0 & Id
\end{pmatrix}\right\},
G_1^{red}\!=\!\left\{
\begin{pmatrix}
	A & 0 \\ 0 & C
\end{pmatrix}\right\},
B_1\!=\!\left\{
\begin{pmatrix}
	D & 0 \\ 0 & E
\end{pmatrix}\right\}
$$
where $A,D\!\in\! GL_{r+1},B\!\in\! M_{(r+1)\times(n-r)},C,E\!\in\! GL_{n-r},$
$D,E$ are upper triangular, and $\det(A)\cdot \det(C)=\det(D)\cdot \det(E)=1$.

Let $\pi:\G(r,n)_1\to \G(r,n)$ be the blow-up map and $E$ the exceptional divisor.
Consider the point 
$p_0 = [\left\langle e_0+e_n,\dots, e_r+e_{n-r}\right\rangle]\in \G(r,n),$
and a point $p_0'\in \G(r,n)_1$ such that $\pi(p_0')=p_0$. By \cite[Corollary II.7.15]{Ha77} the action of $G_1$ on $\G(r,n)$ induces an action of $G_1$ on $\G(r,n)_1$. 

We claim that $B_1 \cdot p_0'$ is dense in $\G(r,n)_1$. In order to prove this we consider the following linear subspaces of $\P^n$
$$\begin{cases}
\Gamma_0=\left\langle e_{r+1},\dots, e_n\right\rangle \\
\Gamma_1=\left\langle e_0,e_{r+1},\dots, e_{n-1}\right\rangle \\
\vdots\\
\Gamma_{r+1}=\left\langle e_0,\dots,e_r,e_{r+1},\dots, e_{n-r-1}\right\rangle 
\end{cases},
\quad
\begin{cases}
\Gamma_0'=\left\langle e_0,e_{r+1},\dots, e_n\right\rangle \\
\Gamma_1'=\left\langle e_0,e_1,e_{r+1},\dots, e_{n-1}\right\rangle \\
\vdots\\
\Gamma_{r}'=\left\langle e_0,\dots,e_r,e_{r+1},\dots, e_{n-r}\right\rangle 
\end{cases}.
$$
Note that
$\Gamma_j\cong \P^{n-r-1}$ and $\Gamma_j'=\left\langle\Gamma_j,e_j\right\rangle\cong \P^{n-r}$. Thus we can define divisors $D_j:=\{[\Sigma]\in \G(r,n): \Sigma \cap \Gamma_j \neq \varnothing \}$ in $\G(r,n)$, and by \cite[Lemma 7.2.1]{Ri17} the strict transform of $D_j$ has class $H-jE\in N^1(\G(r,n)_1)$ for $j=0,\dots,r+1$. Note also that the $D_j$ are $B_1$-invariant but not $G^{red}_1$-invariant prime divisor, and $p_0\notin D_0\cup D_1 \cup \dots \cup \ D_{r+1}$. Therefore, it is enough to prove that
\stepcounter{thm}
\begin{equation}\label{orbitp_0inG(r,n)1}
B_1\cdot  p_0=\G(r,n)\backslash\left\{
D_0\cup D_1 \cup \dots \cup \ D_{r+1}\right\}.
\end{equation}
Indeed, it will then follow that $B_1\cdot p_0'=\G(r,n)_1\backslash\left\{
D_0\cup D_1 \cup \dots \cup \ D_{r+1}\cup E\right\}$, and the result will follow from Proposition \ref{effcone spherical}.

Now, let $q\in \G(r,n)\backslash
\left\{
D_0\cup D_1 \cup \dots \cup \ D_{r+1}\right\}$, and consider $w_j\in \Gamma_j'\cap \Sigma_q\subset \P^n,
j=0,\dots, r$, where $\Sigma_q$ is the $r$-plane of $\P^n$ corresponding to $q$. We may write
\begin{equation*}\label{G(r,n)1 Borbit}
\begin{bmatrix}
w_0\\ w_1\\ \vdots \\w_r
\end{bmatrix}=[d\  e]=
\begin{bmatrix}
d_{0,0} &             &           &   & e_{0,r+1} &   &\dots & &e_{0,n-1} & e_{0,n}\\
d_{1,0} &  d_{1,1}     &           &   & e_{1,r+1} &  &\dots & &e_{1,n-1} & \\
\vdots   &  &    \ddots       &    &\vdots  &   & & \rddots &&\\
d_{r,0} & \ldots & \dots& d_{r,r} & e_{r,r+1} &  \dots &e_{r,n-r} & &   &  
\end{bmatrix}
\end{equation*}
for some complex numbers $d_{ij},e_{ij}$. Note that $d_{jj}=0\Rightarrow w_j\in \Gamma_j\Rightarrow q\in D_j$, and therefore $d_{jj}\neq 0$ for $j=0,\dots,r$. Hence $\Sigma_q=\left\langle w_0,w_1,\dots,w_r \right\rangle$. Similarly the $e_{ij}$'s on the diagonal are also non-zero because $e_{j, n-j}=0\Rightarrow w_j\in \Gamma_{j+1}\Rightarrow q\in D_{j+1}$. Finally, let
$$b\!=\!
\left[
\begin{tabular}{ccc}
$d^{T}$ & 0 & 0\\
0 & $Id$ & \multirow{2}{*}{$\widetilde e$}\\
0 & 0 & 
\end{tabular}
\right],
\quad
\widetilde e=
\begin{bmatrix}
e_{r,r+1} & \dots & e_{0,r+1}\\
\vdots & & \\
e_{r,n-r}&&\vdots\\
&\ddots &\\
&& e_{0,n}
\end{bmatrix}.
$$
To conclude that equation (\ref{orbitp_0inG(r,n)1}) holds it is enough to note that $q=b\cdot p_0$. 
\end{proof}

\begin{Proposition}\label{G(r,2r+1)2 spherical}
$\G(r,2r+1)_2$ is a spherical variety for any $r\geq 1$ and 
$$\Eff(\G(r,2r+1)_2)=\left\langle E_1,E_2,H-(r+1)E_1,H-(r+1)E_2\right\rangle.$$
\end{Proposition}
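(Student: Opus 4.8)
The plan is to mimic closely the proof of Proposition \ref{G(r,n)1 spherical}, now choosing a group that fixes \emph{two} general points $p,q\in\G(r,2r+1)$ rather than one. Since $n=2r+1$ is the smallest admissible value, the two $r$-planes $\Sigma_p,\Sigma_q\subset\P^{2r+1}$ will be general, hence disjoint and spanning $\P^{2r+1}$; I will pick a basis $e_0,\dots,e_{2r+1}$ with $\Sigma_p=\langle e_0,\dots,e_r\rangle$ and $\Sigma_q=\langle e_{r+1},\dots,e_{2r+1}\rangle$. The subgroup $G_2\subset SL_{2r+1+1}$ stabilizing both points is then the block-diagonal $\{(A,C)\in GL_{r+1}\times GL_{r+1}:\det A\det C=1\}$, which is already reductive (no unipotent radical to quotient out, unlike the one-point case), and I will take $B_2\subset G_2$ to be the subgroup of pairs of upper-triangular matrices. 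By \cite[Corollary II.7.15]{Ha77} the action of $G_2$ on $\G(r,2r+1)$ lifts to the blow-up $\G(r,2r+1)_2$, and by symmetry the exceptional divisors $E_1,E_2$ are both $G_2$-invariant (in fact $B_2$-invariant) prime divisors.

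The core of the argument is to exhibit a base point $p_0'\in\G(r,2r+1)_2$ whose $B_2$-orbit is dense, and to identify its complement as a union of $B_2$-invariant prime divisors with the asserted divisor classes. I would set $p_0=[\langle e_0+e_{r+1},\,e_1+e_{r+2},\,\dots,\,e_r+e_{2r+1}\rangle]$, which lies neither on $E_1$ nor $E_2$ (its $r$-plane meets neither $\Sigma_p$ nor $\Sigma_q$). For the colors, I mimic the $\Gamma_j$-construction of the previous proof but now do it on \emph{both sides}: introduce flags of linear subspaces $\Gamma_0\subset\dots\subset\Gamma_{r+1}$ built from $\langle e_{r+1},\dots,e_{2r+1}\rangle$ by successively swapping in $e_0,e_1,\dots$, and symmetrically a second family $\Gamma_0^\sharp\subset\dots\subset\Gamma_{r+1}^\sharp$ built from $\langle e_0,\dots,e_r\rangle$ by swapping in $e_{r+1},e_{r+2},\dots$; then define the incidence divisors $D_j=\{[\Sigma]:\Sigma\cap\Gamma_j\neq\varnothing\}$ and $D_j^\sharp=\{[\Sigma]:\Sigma\cap\Gamma_j^\sharp\neq\varnothing\}$. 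By \cite[Lemma 7.2.1]{Ri17}, on $\G(r,2r+1)_1$ such a divisor has class $H-jE$; blowing up the second point and using that $\Gamma_j$ was chosen to interact with $\Sigma_q$ (resp. $\Gamma_j^\sharp$ with $\Sigma_p$), one gets strict-transform classes $H-jE_1$ and $H-jE_2$ respectively in $N^1(\G(r,2r+1)_2)$. The claim to establish is
\[
B_2\cdot p_0=\G(r,2r+1)\setminus\Big(\textstyle\bigcup_{j} D_j\cup\bigcup_j D_j^\sharp\Big),
\]
from which $B_2\cdot p_0'=\G(r,2r+1)_2\setminus\big(\bigcup_j D_j\cup\bigcup_j D_j^\sharp\cup E_1\cup E_2\big)$ and the effective-cone statement follow from Proposition \ref{effcone spherical}, because the rays $E_1,E_2,H-(r+1)E_1,H-(r+1)E_2$ contain all the relevant boundary and color classes (the intermediate $H-jE_i$ lie inside the cone they generate).

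To prove the orbit equality I would argue as before: take $q'\in\G(r,2r+1)$ off all the listed divisors, pick for each $j$ a representative point $w_j\in\Sigma_{q'}$ lying in the appropriate $(\dim{+}1)$-enlargement of $\Gamma_j$, write the coordinate matrix of $\Sigma_{q'}$ in the basis $e_0,\dots,e_{2r+1}$, and observe that the ``off the divisors'' condition forces the two $(r+1)\times(r+1)$ blocks corresponding to $\Sigma_p$ and $\Sigma_q$ to each be triangular with nonzero diagonal, hence invertible; an explicit $b\in B_2$ (built from the transposes/reflections of these two triangular blocks, exactly as the matrix $b$ in the previous proof) then sends $p_0$ to $q'$. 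The main obstacle I anticipate is purely bookkeeping: getting the two flag families and the indexing of the $w_j$ set up so that ``$q'\notin D_j$'' translates cleanly into ``the $j$-th diagonal entry of the first block is nonzero'' and symmetrically for $D_j^\sharp$ and the second block, and then assembling the two triangular blocks into a single element of $B_2$ with $\det=1$; the normalization of determinants and the precise entries of $b$ require care but no new idea beyond the one-point case. Once the orbit identity is in hand, the dimension count $c(\G(r,2r+1)_2)=0$ is immediate, so $\G(r,2r+1)_2$ is spherical, and the effective cone is read off Proposition \ref{effcone spherical}.
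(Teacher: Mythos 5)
Your overall strategy is the paper's own (its proof of this proposition is literally ``proceed as in the one-point case with $D_j=H-jE_1-(r+1-j)E_2$''), but the specific base point you chose breaks the argument. You set $p_0=[\langle e_0+e_{r+1},\dots,e_r+e_{2r+1}\rangle]$, the graph of the \emph{identity} pairing between the two blocks, whereas the one-point proof (which here applies verbatim, since for $n=2r+1$ one has $G_2=G_1^{red}$ and $B_2=B_1$) uses the anti-diagonal pairing $p_0=[\langle e_0+e_{2r+1},\,e_1+e_{2r},\dots,\,e_r+e_{r+1}\rangle]$. This is not bookkeeping: on the open cell of $r$-planes complementary to $\langle e_{r+1},\dots,e_{2r+1}\rangle$, identified with $(r+1)\times(r+1)$ matrices via graphs, an element $(A,C)\in B_2$ sends the graph of $\phi$ to the graph of $C\phi A^{-1}$, so the $B_2$-orbit of your $p_0$ consists of graphs of upper-triangular matrices and has codimension $\binom{r+1}{2}$, while the orbit of the anti-diagonal point is the big Bruhat cell $Bw_0B$, which is dense. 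Concretely, for $r=1$ your $p_0=[\langle e_0+e_2,e_1+e_3\rangle]$ lies \emph{on} $D_1=\{\Sigma:\Sigma\cap\langle e_0,e_2\rangle\neq\varnothing\}$ (since $e_0+e_2\in\Gamma_1$), and its $B_2$-orbit is $3$-dimensional inside the $4$-dimensional $\G(1,3)$, entirely contained in $D_1$; hence the orbit equality you intend to verify is false for your choice of $p_0$, and the explicit matrix $b$ you plan to build cannot exist for a general target point.

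There is also an error in the divisor classes, though a harmless one. Your intermediate $\Gamma_j$ ($1\le j\le r$) still contains $\langle e_{r+1},\dots,e_{2r+1-j}\rangle$, a subspace of $\Sigma_q$ of vector dimension $r+1-j$, so the incidence divisor $D_j$ passes through the second blown-up point with multiplicity $r+1-j$; its strict transform has class $H-jE_1-(r+1-j)E_2$, not $H-jE_1$. Moreover your ``second family'' is the first one after reindexing ($\Gamma_j^\sharp=\Gamma_{r+1-j}$), so there is only one family of colors, exactly the one the paper uses. This does not affect the final cone, since
$$H-jE_1-(r+1-j)E_2=\tfrac{j}{r+1}\bigl(H-(r+1)E_1\bigr)+\tfrac{r+1-j}{r+1}\bigl(H-(r+1)E_2\bigr),$$
so the correct classes, together with $E_1,E_2$, generate $\left\langle E_1,E_2,H-(r+1)E_1,H-(r+1)E_2\right\rangle$ as claimed — but only once the dense-orbit statement is established with the anti-diagonal base point of Proposition \ref{G(r,n)1 spherical}. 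Also, as a small point, the $\Gamma_j$ are not nested, so they do not form a flag as you describe.
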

\begin{proof}
It is enough to proceed as in the proof of Proposition \ref{G(r,n)1 spherical} with $D_j=H-jE_1-(r+1-j)E_2$.
\end{proof}

\begin{Proposition}\label{G(1,n)2 spherical}
$\G(1,n)_2$ is a spherical variety for any $n\geq 5$ and 
$$\Eff(\G(1,n)_2)=\left\langle E_1,E_2,H-2(E_1+E_2)\right\rangle.$$
\end{Proposition}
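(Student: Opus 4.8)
The plan is to follow the same strategy used in the proofs of Propositions~\ref{G(r,n)1 spherical} and \ref{G(r,2r+1)2 spherical}: exhibit a reductive group $G_2$ acting on $\G(1,n)_2$, a Borel subgroup $B_2$, and a point whose $B_2$-orbit is the complement of finitely many $B_2$-invariant divisors, then read off $\Eff(\G(1,n)_2)$ from Proposition~\ref{effcone spherical}. Concretely, I would take $G=SL_{n+1}$ and let $G_2$ be the subgroup fixing the two blown-up points $p_1,p_2\in\G(1,n)$; choosing bases so that $p_1=\langle e_0,e_1\rangle$ and $p_2=\langle e_2,e_3\rangle$, the group $G_2$ is block-diagonal (up to unipotent radical) of type $GL_2\times GL_2\times GL_{n-3}$ with the determinant-one condition. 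Its reductive quotient $G_2^{red}$ acts on the blow-up $\G(1,n)_2$ by \cite[Corollary II.7.15]{Ha77}, and I take $B_2\subset G_2^{red}$ the subgroup with upper-triangular blocks.

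The core computation is to find the point $p_0'\in\G(1,n)_2$ and the $B_2$-invariant divisors. Since $\rho(\G(1,n)_2)=3$, the effective cone is three-dimensional, and Proposition~\ref{effcone spherical} says its generators are among the boundary and color divisors; the two exceptional divisors $E_1,E_2$ are $G_2^{red}$-invariant (boundary divisors), so I need to produce enough colors so that, together with $E_1,E_2$, they cut out the $B_2$-orbit. The natural candidate for a generating color is $H-2(E_1+E_2)$: a hyperplane section of $\G(1,n)$ whose strict transform passes doubly through both exceptional loci. Geometrically such a divisor is a Schubert-type condition — e.g. the locus of lines meeting a fixed $(n-3)$-plane $\Gamma$ spanned by $e_4,\dots,e_n$ together with two further incidence conditions forcing multiplicity $2$ at each $E_i$ — and I would verify its class using \cite[Lemma 7.2.1]{Ri17} or \cite[Lemma 7.2.1]{Ri17}-type computations as in Proposition~\ref{G(r,n)1 spherical}. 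I would then pick $p_0=[\langle e_0+e_2, e_1+e_3+e_4+\dots\rangle]$ (a point avoiding all the chosen divisors) and show by an explicit matrix $b\in B_2$ that any $q$ in the complement satisfies $q=b\cdot p_0$, exactly mirroring the final paragraph of the proof of Proposition~\ref{G(r,n)1 spherical}. Once $B_2\cdot p_0'=\G(1,n)_2\setminus(E_1\cup E_2\cup D)$ with $D$ the chosen color(s), sphericality is immediate and $\Eff(\G(1,n)_2)=\langle E_1,E_2,H-2(E_1+E_2)\rangle$ follows.

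There are two things to double-check for consistency with Theorem~\ref{main2}. First, one must confirm that the \emph{only} colors needed (up to the cone they span) give exactly the ray $H-2(E_1+E_2)$ and not, say, rays like $H-2E_1-E_2$; this is a matter of identifying which incidence divisor has dense $B_2$-orbit complement, and the asymmetry-free answer $H-2(E_1+E_2)$ is forced by the fact that for $r=1$ the two points play symmetric roles and the cone has only three extremal rays. Second, the statement is claimed for $n\geq 5$; for smaller $n$ (i.e. $n=3,4$) the varieties are covered by Propositions~\ref{Q_k^n weak Fano} and \ref{G(r,n)_k weak Fano} or by the $n\in\{2r+1,2r+2\}$ cases of Theorem~\ref{main2}, so no overlap or gap arises.

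The main obstacle I expect is the divisor-class bookkeeping: correctly identifying a $B_2$-invariant (non-$G_2^{red}$-invariant) hyperplane-type divisor whose strict transform in $\G(1,n)_2$ has class precisely $H-2(E_1+E_2)$, and verifying that the chosen $p_0$ lies off it and off $E_1,E_2$. Everything else — reductivity of $G_2^{red}$, the induced action on the blow-up, and the final ``$q=b\cdot p_0$'' surjectivity argument — is a routine adaptation of the $k=1$ proof, so the proof can simply say ``it is enough to proceed as in the proof of Proposition~\ref{G(r,n)1 spherical}'' after pinning down the divisors.
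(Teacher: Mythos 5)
Your overall strategy (the stabilizer $G_2$ of $p_1=[\langle e_0,e_1\rangle]$, $p_2=[\langle e_2,e_3\rangle]$, its reductive quotient, a Borel $B_2$ with upper-triangular blocks, and Proposition \ref{effcone spherical}) is the paper's, but the key structural claim is wrong: you cannot arrange the complement of the dense $B_2$-orbit to be $E_1\cup E_2\cup D$ with a \emph{single} color $D$ of class $H-2(E_1+E_2)$. Every $B_2$-invariant prime divisor is disjoint from the open orbit, and there are several of them besides $E_1,E_2$ and such a $D$: for instance the Schubert-type divisors of lines meeting $\langle e_2,\dots,e_n\rangle$ (class $H-2E_2$), meeting $\langle e_0,e_1,e_4,\dots,e_n\rangle$ (class $H-2E_1$), meeting $\langle e_0,e_2,e_4,\dots,e_n\rangle$ (class $H-E_1-E_2$), and codimension-two analogues of classes $H-2E_1-E_2$ and $H-E_1-2E_2$. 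The paper's proof uses six invariant divisors $D_{ij}$ of classes $H-iE_1-jE_2$ and proves $B_2\cdot p_0=\G(1,n)\setminus\bigcup_{i,j}D_{ij}$; the effective cone is then generated by $E_1,E_2$ and all six classes, and the stated three generators emerge only because the remaining classes are non-extremal, e.g. $H-2E_1=(H-2E_1-2E_2)+2E_2$ and $H-E_1-E_2=(H-2E_1-2E_2)+E_1+E_2$. With your setup, the surjectivity step ``every $q\notin E_1\cup E_2\cup D$ equals $b\cdot p_0$'' is simply false at generic points of those other invariant divisors, so the proof as proposed breaks precisely at its core computation.

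Two further points. Your symmetry argument that the color rays must be ``asymmetry-free'' is not valid: a cone symmetric under exchanging $p_1,p_2$ can perfectly well have asymmetric extremal rays swapped by the symmetry, as happens for $\Eff(\G(r,2r+2)_2)=\langle E_1,E_2,H-(r+1)E_1-E_2,H-E_1-(r+1)E_2\rangle$; and indeed colors of class $H-2E_1-E_2$ do occur here, they just fail to be extremal. Moreover your candidate base point $p_0=[\langle e_0+e_2,\ e_1+e_3+e_4+\cdots\rangle]$ already lies on the invariant divisor of lines meeting $\langle e_0,e_2,e_4,\dots,e_n\rangle$ (the point $e_0+e_2$ witnesses this), so its $B_2$-orbit is contained in that divisor and cannot be dense; the paper instead takes $p_0=[\langle e_0+e_3+e_n,\ e_1+e_2+e_{n-1}+e_n\rangle]$ and verifies by the explicit nonvanishing conditions (\ref{eqdivesf}) that its orbit is exactly the complement of the six $D_{ij}$.
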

\begin{proof}
We may assume that $p_1=[\left\langle e_0,e_1 \right\rangle], p_2=[\left\langle e_2,e_3 \right\rangle]$. Let us consider the groups
$$
G_2\!\!=\!\!\left\{
\begin{pmatrix}
	A & 0 & B\\
	0 & C & D\\
  0 & 0 & E\\	
\end{pmatrix}\right\},
G_2^{red}\!\!=\!\!\left\{
\begin{pmatrix}
	A & 0 & 0\\
	0 & C & 0\\
  0 & 0 & E\\	
\end{pmatrix}\right\},
B_2\!\!=\!\!\left\{
\begin{pmatrix}
	b_{00}&b_{01}&  0 	&  0   &  0   &  0   &  0   \\
	  0		&b_{11}&  0		&  0   &  0   &  0   &  0   \\
	  0 	&  0 	 &b_{22}&b_{23}&  0   &  0   &  0   \\
	  0		&  0	 &  0		&b_{33}&  0   &  0   &  0   \\
    0 	&  0 	 &	0 	&  0 	 &b_{44}&\dots &b_{4n}\\
    0 	&  0 	 &	0		&  0	 &  0		&\ddots&\vdots\\
		0 	&  0 	 &	0		&  0	 &  0		&  0   &b_{nn}\\
	 
\end{pmatrix}\right\},
$$
and the following linear subspaces of $\P^n$
$$\begin{cases}
\Gamma_{02}=\left\langle e_2,e_3,\dots, e_n\right\rangle \\
\Gamma_{11}=\left\langle e_0,e_2,e_4,e_5,\dots,
e_n\right\rangle \\
\Gamma_{20}=\left\langle e_0,e_1,e_4,e_5,\dots,
e_n\right\rangle \\
\Gamma_{21}=\left\langle e_0,e_1,e_2,e_4,e_5,\dots,
e_{n-1}\right\rangle \\
\Gamma_{12}=\left\langle e_0,e_2,e_3,e_4,e_5,\dots,
e_{n-1}\right\rangle \\
\Gamma_{22}=\left\langle e_0,e_1,\dots, e_{n-2}\right\rangle 
\end{cases},
\quad
\begin{cases}
\Gamma_0'=\left\langle e_0,e_2,e_3,\dots,e_n\right\rangle \\
\Gamma_1'=\left\langle e_0,e_1,e_2,e_4,e_5,\dots,e_n\right\rangle 
\end{cases}.
$$
Thus
$\Gamma_{ij}\cong \P^{n-2}$, $\Gamma_j'\cong \P^{n-1}$, and we can define divisors $D_{ij}:=\{[\Sigma]\in \G(1,n): \Sigma \cap \Gamma_{ij} \neq \varnothing \}$ with class $H-iE_1-jE_2$ which are $B_2$-invariant but not $G^{red}_2$-invariant prime divisor. Consider
$$p_0=\begin{bmatrix}
1&0&0&1&0&\dots&0&0&1\\
0&1&1&0&0&\dots&0&1&1\\
\end{bmatrix}=
\left\langle e_0+e_3+e_n,e_1+e_2+e_{n-1}+e_n\right\rangle
$$
and note that $p_0\in \G(1,n)\backslash \bigcup_{i,j} D_{ij}$. It is enough to show that $B_2\cdot  p_0=\G(1,n)\backslash \bigcup_{i,j} D_{ij}$. Let $q\in \G(1,n)\backslash \bigcup_{i,j} D_{ij}$ and choose $w_j\in \Sigma_q \cap\Gamma_j'$, where $\Sigma_q\subset \P^n$ is the line corresponding to $q\in \G(1,n)$. Then we have
$$\begin{bmatrix}
w_0 \\ w_1
\end{bmatrix}=\begin{bmatrix}
x_0&0&x_2&x_3&x_4&\dots&x_{n-2}&x_{n-1}&x_n\\
y_0&y_1&y_2&0&y_4&\dots&y_{n-2}&y_{n-1}&y_n\\
\end{bmatrix}.
$$
Now, note that 
\stepcounter{thm}
\begin{equation}\label{eqdivesf}
\begin{cases}
q\notin D_{02} \Rightarrow x_0\neq 0;\\ 
q\notin D_{11} \Rightarrow y_1\neq 0, x_3\neq 0;\\ 
q\notin D_{20} \Rightarrow y_2\neq 0;
\end{cases}
\begin{cases}
q\notin D_{21} \Rightarrow y_n\neq 0;\\ 
q\notin D_{12} \Rightarrow x_n\neq 0;\\
q\notin D_{22} \Rightarrow x_{n-1}y_n-x_ny_{n-1}\neq 0.
\end{cases}.
\end{equation}
This yields that $\Sigma_q=\left\langle w_0,w_1 \right\rangle$, and up to re-scaling either $w_0$ or $w_1$ we may assume that $x_n=y_n\neq 0$. Then using the last condition in (\ref{eqdivesf}) we have that $y_{n-1}\neq x_{n-1}$ and thus considering
$$b\!=\!
\left[
\begin{tabular}{cccc}
$b_1$&0 & 0 & 0\\
0&$b_2$ & 0 & 0\\
0&0 & $Id$ & \multirow{2}{*}{$b_3$}\\
0&0 & 0 & 
\end{tabular}
\right]\in B_2,
b_1=
\begin{bmatrix}
x_0&y_0\\ 0 &y_1	 
\end{bmatrix},
b_2=
\begin{bmatrix}
y_2&x_2\\ 0 &x_3
\end{bmatrix},
b_3=
\begin{bmatrix}
y_4-x_4&x_4\\
y_5-x_5&x_5\\
\vdots&\vdots\\
y_{n-1}-x_{n-1}&x_{n-1}\\
0&x_n\\
\end{bmatrix}
$$
we have $q=b\cdot p_0$.
\end{proof}

\begin{Proposition}\label{G(1,5)3 spherical}
$\G(1,5)_3$ is a spherical variety and 
$$\Eff(\G(1,5)_3)=\left\langle E_1,E_2,E_3,H-2(E_1+E_2),H-2(E_1+E_3),H-2(E_2+E_3)\right\rangle.$$ 
\end{Proposition}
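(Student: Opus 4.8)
The plan is to follow the template of Propositions \ref{G(r,n)1 spherical} and \ref{G(1,n)2 spherical}: produce a reductive group acting on $\G(1,5)_3$, a Borel subgroup, and a point whose Borel orbit is the complement of an explicit union of $B$-invariant prime divisors, and then read off $\Eff(\G(1,5)_3)$ from Proposition \ref{effcone spherical}.

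Since the three blown-up points are general, the corresponding lines in $\P^5$ are pairwise disjoint and span $\P^5$, so we may assume $p_1=[\left\langle e_0,e_1\right\rangle]$, $p_2=[\left\langle e_2,e_3\right\rangle]$ and $p_3=[\left\langle e_4,e_5\right\rangle]$. Let $G_3\subset SL_6$ be the subgroup stabilizing each of these three $2$-planes: in the basis $e_0,\dots,e_5$ it is the group of block-diagonal matrices with diagonal blocks $A,B,C\in GL_2$ such that $\det(A)\det(B)\det(C)=1$. In contrast with the earlier propositions no quotient is needed, since $G_3$ is already reductive; by \cite[Corollary II.7.15]{Ha77} it acts on $\G(1,5)_3$. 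Let $B_3\subset G_3$ be the Borel subgroup of triples of upper triangular matrices, and note that $\dim B_3=8=\dim\G(1,5)$.

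Next I would isolate the relevant $B_3$-invariant prime divisors: the exceptional divisors $E_1,E_2,E_3$, which are $G_3$-invariant, and the Schubert divisors $D_{ij}=\{[\Sigma]\in\G(1,5):\Sigma\cap\left\langle\Sigma_{p_i},\Sigma_{p_j}\right\rangle\neq\varnothing\}$ for $1\le i<j\le 3$, which in Plücker coordinates are $\{p_{45}=0\}$, $\{p_{23}=0\}$ and $\{p_{01}=0\}$; a multiplicity count at $p_1,p_2,p_3$ (as in \cite[Lemma 7.2.1]{Ri17}) shows that the strict transform of $D_{ij}$ has class $H-2E_i-2E_j$ and that $D_{ij}$ avoids the third point. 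One then picks a point $p_0\in\G(1,5)$ in general position relative to this coordinate flag -- explicitly, $p_0$ should lie, in the affine chart $\{p_{01}p_{23}p_{45}\neq0\}\cong GL_2\times GL_2$, at a pair of matrices each in the big Bruhat cell -- and verifies that $B_3\cdot p_0$ is dense in $\G(1,5)_3$. The complement of this orbit is then a finite union of $B_3$-invariant prime divisors, namely $E_1,E_2,E_3,D_{12},D_{13},D_{23}$ together with finitely many colors, each cut out by the vanishing of a single Plücker coordinate (any apparent quadratic condition being reducible to this via the Plücker relations). A short computation shows that every such divisor other than the $E_i$ has class $H-aE_1-bE_2-cE_3$ with $a,b,c\in\{0,1,2\}$ and $a+b+c=4$; such a class is either one of the generators $H-2(E_i+E_j)$ or the average of two of them, hence belongs to $\left\langle E_1,E_2,E_3,H-2(E_1+E_2),H-2(E_1+E_3),H-2(E_2+E_3)\right\rangle$. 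By Proposition \ref{effcone spherical} this yields the inclusion $\subseteq$ for $\Eff(\G(1,5)_3)$, the reverse inclusion being clear since the six generators are effective; finally, pairing $E_i$ (resp.\ $H-2(E_i+E_j)$) with the effective curve class $e_i$ (resp.\ with the conic class $2h-e_i-e_j$, which lies in $\NE(\G(1,5)_3)$ by Lemma \ref{conecurvesgeneral}) shows that it meets that curve negatively while every other generator meets it nonnegatively, so none of the six generators is a nonnegative combination of the others and they are precisely the extremal rays.

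The main obstacle is the density of $B_3\cdot p_0$. In the earlier propositions one has $\dim B>\dim\G(r,n)$, leaving room to spare, whereas here $\dim B_3=\dim\G(1,5)=8$ and the orbit is dense only for a carefully chosen $p_0$; one must genuinely check that the stabilizer of $p_0$ in $B_3$ is finite. The key point will be that, with $p_0$ a pair of generic matrices in the $GL_2\times GL_2$ chart, the stabilizer equation forces the relevant group element into the intersection of two conjugate maximal tori of $GL_2$, which is just the center; the relation $\det(A)\det(B)\det(C)=1$ then cuts this down to a finite group. The remaining work -- enumerating the $B_3$-invariant divisors in the boundary of the open orbit and checking that each of their classes lies in the claimed cone -- is routine.
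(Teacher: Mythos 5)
Your overall strategy coincides with the paper's: realize $\G(1,5)_3$ as spherical for the reductive stabilizer $G_3\subset SL_6$ of the three $2$-planes, with Borel $B_3$ the triples of upper triangular blocks, and read off $\Eff$ from Proposition \ref{effcone spherical}. The concrete ingredients you do give are correct: the three hyperplane sections $\{p_{45}=0\},\{p_{23}=0\},\{p_{01}=0\}$ have strict transforms of class $H-2(E_i+E_j)$ and miss the third point, $\dim B_3=8=\dim\G(1,5)$, and your stabilizer sketch in the chart $\{p_{01}p_{23}p_{45}\neq 0\}\cong GL_2\times GL_2$ does work: for a generic pair of matrices the triangularity conditions force the first block into the intersection of two maximal tori of $GL_2$, hence into the scalars, so the stabilizer is finite and the $B_3$-orbit is dense, giving sphericality.

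The gap is in the second half. Proposition \ref{effcone spherical} requires the \emph{complete} list of $B_3$-invariant prime divisors in the complement of the open orbit, and your proposal only asserts it: the claim that every such divisor is cut out by a single Pl\"ucker coordinate, with class $H-aE_1-bE_2-cE_3$, $a+b+c=4$, is not proved, and the parenthetical appeal to the Pl\"ucker relations is not an argument (a priori a color need not be a linear section, and the finite-stabilizer dimension count says nothing about the boundary of the orbit). This is precisely where the paper's one-line proof does its work: arguing as in Proposition \ref{G(1,n)2 spherical} with $D_{ij}=H-iE_1-jE_2-(4-i-j)E_3$, one exhibits, for every $q$ outside the six coordinate hyperplane sections $\{p_{01}=0\},\{p_{23}=0\},\{p_{45}=0\},\{p_{13}=0\},\{p_{15}=0\},\{p_{35}=0\}$, an explicit $b\in B_3$ with $q=b\cdot p_0$; this simultaneously proves density and shows the boundary is exactly this union, so all boundary divisors and colors are among these six, whose strict transforms have classes $H-2E_2-2E_3$, $H-2E_1-2E_3$, $H-2E_1-2E_2$, $H-E_1-E_2-2E_3$, $H-E_1-2E_2-E_3$, $H-2E_1-E_2-E_3$ -- confirming your predicted pattern (the first three are in fact $G_3$-invariant, i.e.\ boundary divisors rather than colors, which is harmless for Proposition \ref{effcone spherical}). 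So your asserted outcome is true, but to complete the proof you must either carry out the transitivity computation on the complement of these six divisors, as the paper does, or otherwise show the boundary of the open orbit has no further divisorial components; the density argument alone does not deliver this.
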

\begin{proof}
It is enough to argue as in the proof of Proposition \ref{G(1,n)2 spherical} with $D_{ij}=H-iE_1-jE_2-(4-i-j)E_3$.
\end{proof}

\begin{Proposition}\label{G(r,2r+2)2 spherical}
$\G(r,2r+2)_2$ is spherical for any $r\geq 1$ and 
$$\Eff(\G(r,2r+2)_2)=\left\langle E_1,E_2,H-(r+1)E_1-E_2,H-E_1-(r+1)E_2\right\rangle.$$ 
\end{Proposition}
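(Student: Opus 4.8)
The plan is to run the argument of Proposition~\ref{G(1,n)2 spherical} (equivalently Proposition~\ref{G(r,n)1 spherical}) for two points in $\G(r,2r+2)$, the new feature being the single coordinate direction of $\P^{2r+2}$ that is not consumed by the span of the two $r$-planes. Using the transitivity of $SL_{2r+3}$ on pairs of general $r$-planes, I would first normalize the blown-up points to $p_1=[\langle e_0,\dots,e_r\rangle]$ and $p_2=[\langle e_{r+1},\dots,e_{2r+1}\rangle]$, so that $\P^{2r+2}=\langle\Sigma_{p_1},\Sigma_{p_2}\rangle\oplus\langle e_{2r+2}\rangle$, where $\Sigma_{p_i}$ denotes the linear space corresponding to $p_i$. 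Then I would introduce the stabilizer $G_2=\Stab(p_1)\cap\Stab(p_2)\subset SL_{2r+3}$, i.e.\ the group of block matrices with invertible diagonal blocks of sizes $r+1,r+1,1$ whose blocks strictly below the diagonal and in the $(1,2)$ position vanish and whose $(1,3)$ and $(2,3)$ blocks are arbitrary (subject to determinant one); its unipotent radical $U_2$ is cut out by requiring the three diagonal blocks to be the identity, and the Levi subgroup $G_2^{red}\cong\{(A,C,e)\in GL_{r+1}\times GL_{r+1}\times\C^*\colon \det(A)\det(C)\,e=1\}$ is realized inside $G_2$ as the block-diagonal matrices. Let $B_2\subset G_2^{red}$ be the Borel subgroup of matrices that are upper triangular within each block. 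Since $G_2^{red}$ fixes $p_1$ and $p_2$, by \cite[Corollary II.7.15]{Ha77} it acts on $\G(r,2r+2)_2$, and $E_1,E_2$ are $G_2^{red}$-invariant prime divisors.

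Second, I would produce the $B_2$-invariant prime divisors that generate the effective cone. For each pair $(i,j)$ with $0\le i,j\le r+1$ and $i+j\in\{r+1,r+2\}$, let $\Gamma_{ij}\subset\P^{2r+2}$ be the coordinate subspace spanned by $e_0,\dots,e_{i-1}$, by $e_{r+1},\dots,e_{r+j}$, and --- precisely when $i+j=r+1$ --- also by $e_{2r+2}$. This is a $B_2$-invariant $\P^{r+1}$, and the restriction $r+1\le i+j\le r+2$ is exactly what is needed so that $\dim\Gamma_{ij}=r+1$ while the unique free coordinate $e_{2r+2}$ is used at most once. Put $D_{ij}=\{[\Sigma]\in\G(r,2r+2)\colon\Sigma\cap\Gamma_{ij}\ne\varnothing\}$; by \cite[Lemma 7.2.1]{Ri17} the strict transform of $D_{ij}$ in $\G(r,2r+2)_2$ has class $H-iE_1-jE_2$. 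Granting the density statement below, the $D_{ij}$ together with $E_1,E_2$ are exactly the $B_2$-invariant prime divisors, so by Proposition~\ref{effcone spherical} one gets $\Eff(\G(r,2r+2)_2)=\langle E_1,E_2,\{H-iE_1-jE_2\}_{i+j\in\{r+1,r+2\}}\rangle$; a short linear-algebra check --- expressing every $H-iE_1-jE_2$ in this list as a nonnegative combination of $E_1,E_2,H-(r+1)E_1-E_2,H-E_1-(r+1)E_2$, with the boundary members $(i,j)=(r+1,1)$ and $(1,r+1)$ genuinely extremal --- then reduces the cone to the stated one.

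The heart of the proof is the density of the Borel orbit: I would exhibit a point $p_0\in\G(r,2r+2)$ lying outside all the $D_{ij}$ and show that $B_2\cdot p_0=\G(r,2r+2)\setminus\bigcup_{i,j}D_{ij}$. It follows that $B_2\cdot p_0'=\G(r,2r+2)_2\setminus\bigl(\bigcup_{i,j}D_{ij}\cup E_1\cup E_2\bigr)$ for a preimage $p_0'$ of $p_0$ under the blow-up, which gives both the sphericity and, via Proposition~\ref{effcone spherical}, the effective cone. As in the proof of Proposition~\ref{G(1,n)2 spherical}, given a general $q$ I would choose auxiliary $(r+2)$-planes $\Gamma'_0,\dots,\Gamma'_r$ meeting $\Sigma_q$ in single points $w_0,\dots,w_r$, put the resulting $(r+1)\times(2r+3)$ matrix of $\Sigma_q$ in a normal form, observe that the hypotheses $q\notin D_{ij}$ force the relevant ``diagonal'' entries and minors of this matrix to be nonzero, and then assemble a block upper-triangular $b\in B_2$ --- three blocks, the last one carrying an anti-triangular rectangular piece in the spirit of the matrix $\widetilde e$ appearing in the proof of Proposition~\ref{G(r,n)1 spherical} --- for which $q=b\cdot p_0$.

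The step I expect to be the main obstacle is precisely this last one: one must choose $p_0$, the auxiliary planes $\Gamma'_j$, and the coordinate orderings compatibly, so that each nonvanishing guaranteed by $q\notin D_{ij}$ is exactly what is needed to invert the corresponding block of $b$, and so that the single extra coordinate $e_{2r+2}$ is threaded through the normal form and through the last block of $b$ correctly. This bookkeeping is where the case $n=2r+2$ genuinely departs from the case $n=2r+1$ of Proposition~\ref{G(r,2r+1)2 spherical} (no free coordinate) and from the regime $r=1$, $n\ge 5$ of Proposition~\ref{G(1,n)2 spherical} (several free coordinates), and it is what forces the extremal non-exceptional rays of $\Eff(\G(r,2r+2)_2)$ to be $H-(r+1)E_1-E_2$ and $H-E_1-(r+1)E_2$ rather than $H-(r+1)(E_1+E_2)$.
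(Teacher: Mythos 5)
Your proposal follows essentially the same route as the paper: it normalizes the two points in the same way, takes the same block Borel $B_2$ (upper triangular blocks of sizes $r+1,r+1,1$), the identical list of $B_2$-invariant divisors $D_{ij}$ with classes $H-iE_1-jE_2$ for $i+j\in\{r+1,r+2\}$ (including the extra coordinate $e_{2r+2}$ exactly when $i+j=r+1$) together with the auxiliary $(r+2)$-planes $\Gamma'_j$, then exhibits an explicit $p_0$ outside all $D_{ij}$ and reduces the density check $q=b\cdot p_0$ to the block-by-block argument of Proposition~\ref{G(1,n)2 spherical}, before passing to the blow-up and applying Proposition~\ref{effcone spherical} and the elementary cone reduction to the four stated generators. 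The only part you leave schematic --- the explicit $p_0$ and the assembly of $b$ --- is precisely what the paper also delegates to the earlier proof's pattern, so there is no substantive divergence.
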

\begin{proof}
Consider the points $p_1=[\left\langle e_0,e_1,\dots,e_r \right\rangle],
p_2=[\left\langle e_{r+1},e_{r+2},\dots,e_{2r+1} \right\rangle]$, the group
$$
B_2\!=\!\left\{
\begin{pmatrix}
	A_1 & 0  & 0  \\
	0  & A_2 & 0  \\
  0  & 0   & A_3\\	
\end{pmatrix}\right\},
\mbox{ where } A_3\in \C^* ,A_i=\begin{bmatrix}
a_{0,0}^i & \ldots & a_{0,r}^i	\\
          & \ddots & \vdots   		\\
          &        & a_{r,r}^i
\end{bmatrix}
\mbox{ for } i=1,2,$$
and the following linear subspaces of $\P^{2r+2}$
\begin{small}
$$\begin{cases}
\Gamma_{0, r+1}=\left\langle e_{r+1},\dots, e_{2r+2}\right\rangle \\
\Gamma_{1,r}=\left\langle e_0,e_{r+1},\dots, e_{2r},e_{2r+2}\right\rangle \\
\vdots\\
\Gamma_{r+1, 0}=\left\langle e_0,\dots,e_r,e_{2r+2}\right\rangle 
\end{cases}\!\!\!
\begin{cases}
\Gamma_{1, r+1}=\left\langle e_0,e_{r+1},\dots,e_{2r+1}\right\rangle \\
\Gamma_{2,r}=\left\langle e_0,e_1,e_{r+1},\dots,e_{2r}\right\rangle \\
\vdots\\
\Gamma_{r+1, 1}=\left\langle e_0,\dots,e_{r+1}\right\rangle 
\end{cases}\!\!\!
\begin{cases}
\Gamma_0'=\left\langle e_0,e_{r+1},\dots, e_{2r+2}\right\rangle \\
\Gamma_1'=\left\langle e_0,e_1,e_{r+1},\dots, 
e_{2r},e_{2r+2}\right\rangle \\
\vdots\\
\Gamma_{r}'=\left\langle e_0,\dots,e_r,e_{r+1},e_{2r+2}\right\rangle 
\end{cases}.
$$
\end{small}
To conclude it is enough to take
$$p_0\!=\!\begin{pmatrix}
1  &  &  &&      && 1   & 1 \\
  & \ddots&  &&&\rddots&   & \vdots \\
 &  & 1 && 1     &    && 1 
\end{pmatrix}\in \G(r,2r+2)$$
and to argue as in the proof of Proposition \ref{G(1,n)2 spherical}.
\end{proof}

\section{Non-spherical blow-ups of Grassmannians}
\label{nsGbu}

In this section we show that the blow-ups of Grassmannians at general points in the previous section are the only spherical ones. We also describe which blow-ups have complexity at most one. Recall that by Remark \ref{rem1} these blow-ups are Mori dream spaces.

\setlength{\tabcolsep}{20pt}
\begin{table}[h!]
\centering
\label{tablec1}
\begin{tabular}{ccc}
\hline
Number of blown-up points & Complexity one blow-ups & Reference\\
\hline
$1$    & none      
& Proposition \ref{G(r,n)1 spherical}                  \\ 
$2$    & $\G(2,n)_2, n\geq 7$      
& Corollary \ref{corollary k=2 c=1}                  \\ 
$3$    & $\G(1,n)_3, n\geq 6$ and $\G(2,8)_3$                                                      & Corollary \ref{corollary k>=3 c=1}  \\ 
$4$    &  $\G(1,7)_4$ 
& Corollary \ref{corollary k>=3 c=1} \\ 
$\geq 5$ & none  
& Corollary \ref{corollary k>=3 c=1}   \\ \hline 
\end{tabular}
\end{table}

Unlike the previous section, here we do not intend to exhibit the Borel orbit of a general point but rather to compute its codimension. 
The first step consists in relating the automorphisms of a variety to the automorphisms of its blow-ups.
Thanks to a result due to M. Brion \cite{Br11} in the algebraic setting, and to A. Blanchard \cite{Bl56} in the analytic setting we get the following result on the connected component of the identity of the automorphism group of a blow-up.

\begin{Proposition}\label{automorphism prop}
Let $X$ be a noetherian integral normal scheme, $Z\subset X$ a closed and reduced subscheme of codimension greater than or equal to two, and $X_{Z}:= Bl_{Z}X$ the blow-up of $X$ along $Z$. Then the connected component of the identity of the automorphism group of $X_Z$ is isomorphic to the connected component of the identity of the subgroup $\Aut(X,Z)\subset \Aut(X)$ of automorphisms of $X$ stabilizing $Z$, that is
$$\Aut(X_Z)^{o}\cong \Aut(X,Z)^{o}.$$ 
\end{Proposition}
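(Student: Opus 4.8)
The plan is to reduce the statement to the two cited results of Brion and Blanchard by exhibiting the natural group homomorphisms and checking they are isomorphisms on the level of connected components. First I would recall that, since $Z\subset X$ is closed and reduced, any automorphism $\varphi\in\Aut(X,Z)$ preserving $Z$ lifts canonically to an automorphism of the blow-up $X_Z=\Bl_Z X$: the blow-up is the functorial ``universal'' modification making the ideal sheaf $\mathcal{I}_Z$ invertible, so $\varphi^*\mathcal{I}_Z=\mathcal{I}_Z$ forces a unique lift $\widetilde\varphi$ making the square with the blow-down map $\pi:X_Z\to X$ commute. This gives a group homomorphism $\Aut(X,Z)\to\Aut(X_Z)$. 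Conversely, because $\Exc(\pi)$ is the unique $\pi$-exceptional divisor and $\operatorname{codim}_X Z\geq 2$, any automorphism $\psi$ of $X_Z$ in the connected component of the identity must send $\Exc(\pi)$ to itself (its class is rigid in $N^1(X_Z)$, being the unique prime divisor contracted by $\pi$), hence descends along $\pi$ to a birational self-map of $X$ which is an isomorphism in codimension one; normality of $X$ and the fact that it is actually regular away from $\pi(\Exc(\pi))=Z$ of codimension $\geq 2$ then promote this to a genuine automorphism of $X$ stabilizing $Z$. These two assignments are mutually inverse, giving the desired isomorphism on identity components.

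The cleanest way to organize this, and the one I would actually write, is to invoke directly the theorem of Brion (\cite{Br11}, in the scheme-theoretic generality we need, with the analytic counterpart \cite{Bl56} covering the complex-analytic case): if $G$ is a connected group scheme acting on $X$ and stabilizing the closed subscheme $Z$, then the $G$-action lifts uniquely to $X_Z$; conversely any connected group acting on $X_Z$ descends to $X$. Applying this with $G=\Aut(X_Z)^{o}$ acting on $X_Z$, the action descends to a faithful action on $X$ (faithfulness because $\pi$ is birational and $G$ is connected, so a lift fixing $X$ pointwise would fix a dense open of $X_Z$, hence all of $X_Z$), and since $G$ preserves $\Exc(\pi)$ it preserves $Z=\pi(\Exc(\pi))$; thus $\Aut(X_Z)^{o}\hookrightarrow\Aut(X,Z)^{o}$. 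Applying it the other way with $G=\Aut(X,Z)^{o}$ gives the reverse inclusion $\Aut(X,Z)^{o}\hookrightarrow\Aut(X_Z)^{o}$, and since the two maps are inverse to each other as homomorphisms of group schemes the identity components are isomorphic.

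The main obstacle is the descent direction: showing that an automorphism of $X_Z$ lying in the identity component genuinely preserves the exceptional locus and that the induced birational map on $X$ is a morphism. The first point uses that $\Aut(X_Z)^{o}$ is connected and acts on the finite set of $\pi$-exceptional prime divisors — which here is the single divisor $\Exc(\pi)$ — so the action on this set is trivial; one must be slightly careful when $Z$ is not irreducible, but $\Exc(\pi)$ as a whole is still preserved. The second point is exactly where normality and $\operatorname{codim}\geq 2$ enter: a birational self-map of a normal scheme which is an isomorphism over the complement of a codimension $\geq 2$ closed set extends to an automorphism. Both of these are subsumed in the statements of \cite{Br11} and \cite{Bl56}, so in practice the proof is a short citation plus the observation that ``$G$ stabilizes $Z$'' is automatic here because $Z$ is recovered from the blow-up as $\pi(\Exc(\pi))$; I would keep the written proof to a few lines referencing those two papers.
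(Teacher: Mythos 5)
Your proposal follows essentially the same route as the paper: the identity component of $\Aut(X_Z)$ is descended to $X$ via Brion's result \cite[Proposition 2.1]{Br11} (Blanchard's lemma in the algebraic setting), the descended automorphisms are checked to stabilize $Z$, and the inverse map is obtained by lifting automorphisms of $(X,Z)$ through the universal property of the blow-up (the paper cites \cite[Corollary II.7.15]{Ha77}); where the paper verifies stabilization of $Z$ by comparing fiber dimensions, you use the equivalent observation that $Z=\pi(\Exc(\pi))$ and that $\pi$-equivariance of the descended action preserves the exceptional locus. Two small corrections about where the hypotheses enter: the descent half of what you attribute to \cite{Br11} is Blanchard's lemma and needs $\pi_{*}\mathcal{O}_{X_Z}\cong\mathcal{O}_X$ --- this is precisely where normality of $X$ (with $\pi$ proper birational) is used --- while the lifting half is simply functoriality of blow-ups and has nothing to do with connectedness; and the principle invoked in your first paragraph, that a birational self-map of a normal scheme which is an isomorphism off a closed subset of codimension at least two automatically extends to an automorphism, is false in general (small modifications such as flops are isomorphisms in codimension one without being morphisms), so the descent must indeed come from Blanchard--Brion, as in your second paragraph and in the paper, and not from such an extension argument.
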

\begin{proof}
Let $\pi:X_{Z}\to X$ be the blow-up of $X$ along $Z$. Since $\pi_{*}\mathcal{O}_{X_{Z}}\cong \mathcal{O}_X$, by \cite[Proposition 2.1]{Br11} any automorphism $\phi\in \Aut(X_Z)^{o}$ induces an automorphism $\overline{\phi}\in \Aut(X)$ such that the diagram 
\[
  \begin{tikzpicture}[xscale=2.0,yscale=-1.2]
    \node (A0_0) at (0, 0) {$X_Z$};
    \node (A0_1) at (1, 0) {$X_Z$};
    \node (A1_0) at (0, 1) {$X$};
    \node (A1_1) at (1, 1) {$X$};
    \path (A0_0) edge [->]node [auto] {$\scriptstyle{\phi}$} (A0_1);
    \path (A1_0) edge [->]node [auto] {$\scriptstyle{\overline{\phi}}$} (A1_1);
    \path (A0_1) edge [->]node [auto] {$\scriptstyle{\pi}$} (A1_1);
    \path (A0_0) edge [->]node [auto,swap] {$\scriptstyle{\pi}$} (A1_0);
  \end{tikzpicture}
\]
commutes. Let $x\in Z$ be a point such that $\overline{\phi}(x)\notin Z$, and let $F_x, F_{\overline{\phi}(x)}$ be the fibers of $\pi$ over $x$ and $\phi(x)$ respectively. Then $\phi_{|F_x}:F_x\rightarrow F_{\overline{\phi}(x)}$ induces an isomorphism between $F_x$ and $F_{\overline{\phi}(x)}$. On the other hand $F_x$ has positive dimension while $F_{\overline{\phi}(x)}$ is a point. A contradiction. Therefore $\overline{\phi}\in \Aut(X,Z)$.  Furthermore, since $\phi\in \Aut(X_Z)^{o},$ the automorphism $\overline{\phi}$ must lie in $\Aut(X,Z)^{o}$. This yields a morphism of groups 
$$
\begin{array}{cccc}
\chi: &\Aut(X_Z)^{o}& \longrightarrow & \Aut(X,Z)^{o}\\
      & \phi & \longmapsto & \overline{\phi}
\end{array}
$$
If $\overline{\phi} = Id_{X}$ then $\phi$ coincides with the identity on a dense open subset of $X_Z$, hence $\phi = Id_{X_Z}$. Therefore, the morphism $\chi$ is injective. 
Finally, by \cite[Corollary 7.15]{Ha77} any automorphism of $X$ stabilizing $Z$ lifts to an automorphism of $X_Z$, that is $\chi$ is surjective as well. 
\end{proof}

Given $k$ general points in $\G(r,n)$ corresponding to linear subspaces $p_1,\dots,p_k\subset \P^n,$ denote by $G_k$ the group 
$$G_k=\{g\in SL_{n+1}: gp_1=p_1,\dots,gp_k=p_k\}\subset SL_{n+1},$$ 
by $U_k$ its unipotent radical, by $G_k^{red}$ the quotient $G_k/U_k$, and by $B_k$ a Borel subgroup of $G_k^{red}$. 

\begin{Lemma}\label{Bkisenough}
If $\G(r,n)_k$ is a $G$-variety with complexity $c$ for some reductive group $G$, 
then $\G(r,n)_k$ is a $G_k^{red}$-variety (see notation above) with complexity $c_k\leq c$. In particular, $\G(r,n)_k$ is spherical if and only if $B_k$ has a dense orbit. Moreover, the complexity can not decrease under blow-up, that is $c_k\geq c_{k-1}$. In particular, $\G(r,n)_k$ spherical implies $\G(r,n)_{k-1}$ spherical.
\end{Lemma}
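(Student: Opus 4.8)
The plan is to reduce the whole statement to the single inequality $c_k\le c$ and then read off the two ``in particular'' assertions from it. First I would note that $\G(r,n)_k$ really is a $G_k^{red}$-variety: realizing $G_k^{red}$ as a Levi subgroup of $G_k\subset SL_{n+1}$, the group $G_k$ fixes each of $p_1,\dots,p_k$ and hence acts on $\G(r,n)_k=\Bl_{\{p_1,\dots,p_k\}}\G(r,n)$ by the universal property of the blow-up (equivalently, by \cite[Corollary II.7.15]{Ha77}); since $G_k^{red}$ is reductive, $c_k:=c(G_k^{red},\G(r,n)_k)$ is defined, and the whole content is that $c_k\le c$ for an arbitrary connected reductive $G$ acting on $\G(r,n)_k$ with complexity $c$.

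For $c_k\le c$ the key idea is that every connected reductive group acting on $\G(r,n)_k$ is, up to a finite central isogeny and a conjugacy, a subgroup of $G_k^{red}$. Concretely, since $G$ is connected its action factors through $\Aut(\G(r,n)_k)^{\circ}$, which by Proposition \ref{automorphism prop} is isomorphic to $\Aut(\G(r,n),Z)^{\circ}$ with $Z=\{p_1,\dots,p_k\}$. Now $\Aut(\G(r,n))^{\circ}=PGL_{n+1}$, and a connected group of automorphisms can neither realize the duality involution of $\G(r,n)$ (which occurs when $n=2r+1$) nor permute the finite set $Z$; hence $\Aut(\G(r,n),Z)^{\circ}$ is exactly the identity component $\widetilde G_k$ of the image of $G_k$ in $PGL_{n+1}$. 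The surjection $G_k\twoheadrightarrow\widetilde G_k$ has finite central kernel, so it carries $U_k$ isomorphically onto the unipotent radical of $\widetilde G_k$ and carries $G_k^{red}$ isogenously onto a Levi subgroup $\overline{G_k^{red}}\subset\widetilde G_k$. Let $\overline G$ be the image of $G$ in $\widetilde G_k$; it is connected reductive and has the same orbits, and the same Borel orbits, on $\G(r,n)_k$ as $G$, so $c(\overline G,\G(r,n)_k)=c$. By the Levi decomposition theorem in characteristic zero (all Levi subgroups are conjugate, and every reductive subgroup lies in one), $\overline G$ is conjugate, by an element of the unipotent radical of $\widetilde G_k$, into $\overline{G_k^{red}}$; such a conjugation is an equivariant automorphism of $\G(r,n)_k$, hence complexity-preserving, so after it $\overline G\subset\overline{G_k^{red}}$. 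Since enlarging the acting group does not increase complexity (a Borel of a subgroup lies in a Borel of the ambient group), and an isogeny does not change it, we get $c=c(\overline G,\G(r,n)_k)\ge c(\overline{G_k^{red}},\G(r,n)_k)=c(G_k^{red},\G(r,n)_k)=c_k$.

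With $c_k\le c$ in hand, the two corollaries come out quickly. By definition $\G(r,n)_k$ is spherical precisely when some connected reductive group acting on it has complexity zero; by the inequality this forces $c_k=0$, i.e. the Borel $B_k\subset G_k^{red}$ has a dense orbit, and conversely a dense $B_k$-orbit displays $\G(r,n)_k$ as spherical, so $\G(r,n)_k$ is spherical $\iff c_k=0\iff B_k$ has a dense orbit. For the monotonicity, I would use that, $p_k$ being general and so distinct from $p_1,\dots,p_{k-1}$, its preimage in $\G(r,n)_{k-1}$ is a single reduced point $p_k'$ and $\G(r,n)_k=\Bl_{p_k'}\G(r,n)_{k-1}$; the resulting birational morphism $\psi\colon\G(r,n)_k\to\G(r,n)_{k-1}$ is $G_k^{red}$-equivariant, because $G_k^{red}\subset G_k\subset G_{k-1}$ fixes $p_1,\dots,p_{k-1}$ and also $p_k'$. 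A $G_k^{red}$-equivariant birational morphism induces a $G_k^{red}$-equivariant isomorphism of function fields, hence $c(G_k^{red},\G(r,n)_k)=c(G_k^{red},\G(r,n)_{k-1})$; applying the already-proved inequality to $\G(r,n)_{k-1}$ with the reductive group $G_k^{red}$ gives $c_{k-1}\le c(G_k^{red},\G(r,n)_{k-1})=c_k$. In particular, if $\G(r,n)_k$ is spherical then $c_{k-1}\le c_k=0$, so $\G(r,n)_{k-1}$ is spherical.

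The step I expect to cause the most trouble is the middle one: keeping straight the finite central isogenies linking $G_k$, $G_k^{red}$ and their images in $PGL_{n+1}$, and, above all, invoking correctly the fact that a reductive subgroup of the (non-reductive) group $\widetilde G_k$ can be conjugated inside a Levi subgroup. This input is essential and is not a formality — one should note that passing from a group to a Levi subgroup does \emph{not} in general preserve complexity, so the argument really must conjugate $\overline G$ into $\overline{G_k^{red}}$ rather than directly compare $\widetilde G_k$ with its Levi. Everything else (the equivariance of $\psi$, and the invariance of complexity under equivariant birational maps and under isogenies) is routine once the framework of Proposition \ref{automorphism prop} is set up.
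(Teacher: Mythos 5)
Your proposal is correct and follows essentially the same route as the paper: identify $\Aut^\circ(\G(r,n)_k)$ with $\Aut^\circ(\G(r,n),p_1,\dots,p_k)$ via Proposition \ref{automorphism prop}, pass through $PGL_{n+1}$ to realize $G$ inside $G_k$, conjugate its Borel into $B_k$ to get $c_k\le c$, and obtain the monotonicity under blow-down from equivariance of the blow-up map. The only difference is one of detail: you make explicit the Mostow/Levi-conjugacy step and the $SL_{n+1}$ versus $PGL_{n+1}$ isogeny, which the paper compresses into ``we may assume $G\subset G_k$\dots after conjugation if necessary, $B\subset B_k$''.
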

\begin{proof}
Assume that $\G(r,n)_k$ is a $G$-variety with complexity $c$. Then $G\subset \Aut^\circ(\G(r,n)_k)$ is a reductive group with a Borel subgroup $B$ having general orbit of codimension $c$. By Proposition \ref{automorphism prop} we have
$$\Aut^\circ(\G(r,n)_k)\cong\Aut^\circ(\G(r,n),p_1,\dots,p_k),$$
where $p_1,\dots,p_k\in \G(r,n)$ are general points, and $\Aut^\circ(\G(r,n),p_1,\dots,p_k)$ is the connected component of the identity of the group of automorphisms of $\G(r,n)$ fixing the $p_i$'s. By \cite[Theorem 1.1]{Ch49} $\Aut^\circ(\G(r,n))\cong \P(GL_{n+1})$. Therefore $\Aut^\circ(\G(r,n)_k)\cong
(\P(GL_{n+1}),p_1,\dots,p_k)$. Since $G$ is a reductive affine algebraic group, we may assume that $G\subset (SL_{n+1},p_1,\dots,p_k)=G_k$.

Therefore, after conjugation if necessary, we have $B\subset B_k$ and the codimension $c$ of a general $B$-orbit is at least the codimension $c_k$ of a general $B_k$-orbit. Finally, the last statement follows from Proposition \ref{automorphism prop}. 
\end{proof}

When $k\leq \left\lfloor \frac{n+1}{r+1} \right\rfloor$
we may choose the points as
$$p_1=[\left\langle e_0,\dots,e_r \right\rangle],\dots,p_k=[\left\langle e_{(k-1)(r+1)},\dots,e_{k(r+1)-1}\right\rangle]$$
and the corresponding Borel subgroup $B_k$ with upper triangular blocks. We start by analyzing what happens for the blow-up at two general points.

\subsection{Blow-up at two general points}
In Section \ref{sGbu} we showed that $\G(1,n)_2$ is spherical. In order to determine whether, for some fixed value of $r\geq 2$, $\G(r,n)_2$ is spherical we begin with the smallest possible $n = 2r+1$ and keep increasing $n$. When $n = 2r+1$ the dimension of the Borel subgroup $B_2$ is greater than the dimension of $\G(r,2r+1)$, and therefore we may have a dense orbit. Indeed, by Proposition \ref{G(r,2r+1)2 spherical} this is the case.

When we consider the next $n$, that is $n=2r+2,$ then $\dim(B_2)=\dim(\G(r,n))$ and $\G(r,n)_2$
may be spherical. Again this is the case as we saw in Proposition \ref{G(r,2r+2)2 spherical}. Now, when $r\geq 2$ we must take into account the gap $2r+2<n<4r+1$. Note that when $r=1$ this gap does not exist.

\begin{Lemma}\label{dimcount}
If $2r+2<n<4r+1$ then 
$$c(\G(r,n)_2)\geq 
\dfrac{(n-(2r+2))((4r+1)-n)}{2}>0.$$
In particular, if $2r+2<n<4r+1$ then $\G(r,n)_2$ is not spherical.
\end{Lemma}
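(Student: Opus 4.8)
The plan is to estimate the dimension of the Borel subgroup $B_2$ and compare it with $\dim \G(r,n) = (r+1)(n-r)$, invoking Lemma \ref{Bkisenough} to reduce the problem to showing that $B_2$ has no dense orbit. By the choice of the two general points $p_1 = [\langle e_0,\dots,e_r\rangle]$ and $p_2$, together with the fact that $n \geq 2r+2$, we may take $G_2$ to be a parabolic-type subgroup of $SL_{n+1}$ stabilizing both $(r+1)$-dimensional subspaces; its reductive quotient $G_2^{red}$ splits (up to the determinant condition) as $GL_{r+1} \times GL_{r+1} \times GL_{n-2r-1}$, and the Borel $B_2$ we choose has upper-triangular blocks. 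First I would compute $\dim B_2$ explicitly: the dimension of a Borel subgroup of a product $GL_{a_1}\times\cdots\times GL_{a_t}$ is $\sum_i \binom{a_i+1}{2}$, so here $\dim B_2 = 2\binom{r+2}{2} + \binom{n-2r}{2} - 1$, where the $-1$ comes from the determinant constraint.

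The key computation is then the inequality $c(\G(r,n)_2) \geq \dim\G(r,n) - \dim B_2$, which holds since the codimension of a general $B_2$-orbit is at least $\dim\G(r,n)$ minus the dimension of the acting group. So I would write $\dim\G(r,n) - \dim B_2 = (r+1)(n-r) - 2\binom{r+2}{2} - \binom{n-2r}{2} + 1$ and simplify this polynomial in $n$ and $r$. Expanding: $(r+1)(n-r) = (r+1)n - r(r+1)$, and $2\binom{r+2}{2} = (r+1)(r+2)$, and $\binom{n-2r}{2} = \frac{(n-2r)(n-2r-1)}{2}$. Carrying out the algebra, I expect the difference to factor exactly as $\frac{(n-(2r+2))((4r+1)-n)}{2}$. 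The main obstacle — really the only nontrivial point — is verifying that this algebraic identity is correct and that the bound $c \geq \dim(\text{variety}) - \dim(\text{group})$ is the relevant one rather than something weaker; one should double-check the determinant normalization contributes exactly $-1$ and that no block dimension is being miscounted when $n$ lies strictly between $2r+2$ and $4r+1$ (so that the middle block $GL_{n-2r-1}$ is genuinely present and of positive size).

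Once the identity is established, the positivity is immediate: the hypothesis $2r+2 < n < 4r+1$ means both factors $n - (2r+2)$ and $(4r+1) - n$ are strictly positive integers, so their product divided by $2$ is a positive rational number, whence $c(\G(r,n)_2) > 0$ and $\G(r,n)_2$ is not spherical by Lemma \ref{Bkisenough}. I would also remark, as the paragraph preceding the lemma already does, that this gap is vacuous when $r = 1$, which is consistent with Proposition \ref{G(1,n)2 spherical} showing $\G(1,n)_2$ is always spherical; this serves as a sanity check on the dimension count. If a cleaner bound is desired one could instead argue directly that a general $B_2$-orbit has dimension at most $\dim B_2$ and that equality $\dim B_2 < \dim \G(r,n)$ already forces positive complexity, avoiding any need to identify stabilizers — this is the route I would ultimately take, since it sidesteps having to analyze the generic stabilizer of the $B_2$-action.
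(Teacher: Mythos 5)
Your proposal is correct and follows essentially the same route as the paper: bound $c(\G(r,n)_2)$ below by $\dim\G(r,n)-\dim B_2$ (via Lemma \ref{Bkisenough}), compute $\dim B_2=(r+1)(r+2)+\tfrac{(n-2r-1)(n-2r)}{2}-1$ from the block-upper-triangular structure with the determinant constraint, and verify that the difference factors as $\tfrac{(n-2r-2)(4r+1-n)}{2}$, which is positive exactly in the stated range. The algebraic identity and the block dimensions you wrote check out against the paper's computation.
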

\begin{proof}
This is just a dimension count:
$c(\G(r,n)_2)\geq \dim(\G(r,n))-\dim(B_2)$
$$=(n-r)(r+1)- \left[(r+1)(r+2)+\dfrac{(n-2r-1)(n-2r)}{2}-1\right]=
\dfrac{(n-2r-2)(4r+1-n)}{2}.$$
The last claim follows from Lemma \ref{Bkisenough}.
\end{proof}

To conclude the analysis of the case $k = 2$ it is enough to apply the following result.

\begin{Proposition}\label{G(r,n)2 spherical}
If $r\geq 2$ and $n\geq 2r+2$ then
$$c(\G(r,n)_2)=\begin{cases}
\dfrac{(n-(2r+2))((4r+1)-n)}{2} & \mbox{ if } n\leq 3r+2\\
\dfrac{r(r-1)}{2} & \mbox{ if } n> 3r+2
\end{cases}.$$
In particular, $\G(r,n)_2$ is not spherical for $r\geq 2$ and $n\geq 4r+1$.
\end{Proposition}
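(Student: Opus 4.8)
The plan is to use Lemma~\ref{Bkisenough} to reduce the statement to computing the codimension of a generic orbit of $B_2$ acting on $\G(r,n)$. If $S$ is the stabilizer of a generic point, then $c(\G(r,n)_2)=\dim\G(r,n)-\dim B_2+\dim S$, and the dimension count in the proof of Lemma~\ref{dimcount} gives $\dim\G(r,n)-\dim B_2=\frac{(n-2r-2)(4r+1-n)}{2}$ for all $n$. So it is enough to show that $S$ is finite when $n<3r+2$ and that $\dim S=\frac{r(r-1)}{2}-\frac{(n-2r-2)(4r+1-n)}{2}$ when $n\ge 3r+2$; since the two displayed expressions agree at $n=3r+2$, this recovers the stated two-case formula, and the ``not spherical'' clause for $n\ge 4r+1$ follows because $\frac{r(r-1)}{2}>0$ for $r\ge 2$.

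First I would fix a birational model. Write $\C^{n+1}=V_1\oplus V_2\oplus V_3$ adapted to $p_1,p_2$, with $\dim V_3=n-2r-1=:m$, and take $B_2$ to consist of triples of block-upper-triangular matrices modulo the $SL$-scaling. A generic $(r+1)$-plane $W$ meets $V_j\oplus V_3$ trivially for $j=1,2$ by a trisecant-type count, so it is the row span of $[\,I_{r+1}\mid M_2\mid M_3\,]$ with $M_2\in GL_{r+1}$ and $M_3$ of size $(r+1)\times m$, both generic, and $B_2$ acts on $(M_2,M_3)$ by $b_1$ on the left and $b_2,b_3$ on the right, up to transposition. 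The equation stabilizing $M_2$ then determines $b_2$ from $b_1$ and confines $b_1$ to the intersection of $B(GL_{r+1})$ with a generic conjugate of it, that is, to a maximal torus $T$.

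When $2r+2\le n<3r+2$ one has $m<r+1$, so $M_3$ has full column rank, and the equation stabilizing $M_3$ forces (the transpose of) $b_1$ to preserve the generic proper nonzero subspace $\operatorname{Im}(M_3)\subset\C^{r+1}$; a maximal torus preserves no such subspace except through its scalars, so $b_1$ is a scalar, $b_2$ and $b_3$ are the matching scalars, and $S$ is finite. Hence $c=\dim\G(r,n)-\dim B_2=\frac{(n-2r-2)(4r+1-n)}{2}$, with no separate lower bound needed (Lemma~\ref{dimcount} being a cross-check in the range $n<4r+1$). When $n\ge 3r+2$ one has $m\ge r+1$, and it is cleaner to realize a generic $W$ as the graph of a linear map over an $(r+1)$-plane $U\subset V_3$. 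Since $B(GL(V_3))$ acts on $\Grass(r+1,V_3)$ with a dense orbit, the big Schubert cell, one may fix a generic $U=U_0$, and the crucial point is that the stabilizer of $U_0$ in $B(GL(V_3))$ surjects onto the Borel subgroup of $GL(U_0)$ cut out by the induced flag. Granting this, the problem reduces verbatim to $n=3r+2$, where the stabilizing equations become Borel-conjugacy constraints on a single auxiliary matrix $P\in GL_{r+1}$, placing $P$ in the intersection of three Borel subgroups in general position, namely the centre; a short count then gives $\dim S=0$ in $B_2$ and $c=\frac{r(r-1)}{2}$, independent of $n$ in this range.

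The main obstacle is the surjectivity claim of the previous paragraph: that $\Stab_{B(GL(V_3))}(U_0)$ maps onto a full Borel of $GL(U_0)$. Proving it amounts to identifying the generic Borel-orbit on the Grassmannian with the big cell and verifying that the kernel of the restriction map $\Stab(U_0)\to GL(U_0)$ --- the unipotent upper-triangular transformations of $V_3$ annihilating $U_0$ --- has exactly the expected dimension $\binom{m-r}{2}$. A secondary, pervasive obstacle is the genericity bookkeeping: one must check that for a generic $W$ all the relevant intersections of conjugate Borel subgroups, and the action of the torus $T$ from $M_2$ on the subspace from $M_3$, are as non-degenerate as the dimension counts presume, so that all the estimates are equalities.
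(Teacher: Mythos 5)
Your proposal is correct and reaches the same reduction as the paper (via Lemma~\ref{Bkisenough}, compute the generic $B_2$-stabilizer on $\G(r,n)$), but the way you compute that stabilizer is genuinely different. The paper fixes explicit $B_2$-stable codimension-$r$ subspaces $\Gamma_0',\dots,\Gamma_r'$, notes that a general $\Sigma_q$ meets each in a single point $w_j$, and turns the stabilizer condition into explicit linear systems in the entries of $A_1,A_2,A_3$, which it solves by a variables-minus-conditions count using genericity of the coordinates; this is elementary and treats both ranges of $n$ uniformly, at the cost of heavy notation. You instead encode the stabilizer by the intertwining relations $b_1M_2=M_2b_2$, $b_1M_3=M_3b_3$ and use structural facts: two Borels in general position meet in a maximal torus, an element of a torus preserving a subspace in general position with respect to its eigenbasis is scalar (this disposes of $2r+2\le n<3r+2$), and for $n\ge 3r+2$ an equivariant slice over the open Schubert cell of $\Grass(r+1,V_3)$ reducing the orbit-codimension computation to the critical case $n=3r+2$, where three Borels in general position intersect in the centre. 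The pieces you flag as obstacles do go through: the kernel of $\Stab_{B(GL(V_3))}(U_0)\to GL(U_0)$ consists of upper-triangular $g$ with $(g-I)U_0=0$, and a row-by-row count for generic $U_0$ gives dimension exactly $\binom{m-r}{2}$, so the image is a closed subgroup of full dimension inside the connected Borel of the induced flag, hence equals it; and the ``reduces verbatim'' step is the standard fact that when the base orbit is dense the codimension of the generic orbit upstairs equals that of the generic $\Stab_{B_2}(U_0)$-orbit in the fibre $\Hom(U_0,V_1\oplus V_2)$ (note that for $n>3r+2$ the stabilizer in $B_2$ itself then has dimension $\binom{m-r}{2}$, matching your first-paragraph target; your ``$\dim S=0$'' should be read as referring to the reduced problem at $n=3r+2$). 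What your route buys is a conceptual explanation of why the complexity stabilizes at $\frac{r(r-1)}{2}$ once $n\ge 3r+2$ and why that threshold (full column rank of $M_3$) appears; what the paper's route buys is a self-contained computation needing no statements about Borel intersections or equivariant fibrations, and which recycles verbatim for the later complexity computations in Section~\ref{nsGbu}.
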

\begin{proof}
In this case, as in the proof of Proposition \ref{G(r,2r+2)2 spherical}, $B_2$ has three blocks but the last one is bigger:
$$A_3=\begin{bmatrix}
a_{0,0}^3 & a_{0,1}^3 & \ldots & a_{0,l}^3	\\
          & a_{1,1}^3 & \ldots & a_{1,l}^3  \\
          &           & \ddots & \vdots   		\\
          &           &a_{l-1,l-1}^3& a_{l-1,l}^3\\
  0       &           &        & a_{l,l}^3
\end{bmatrix}, \mbox{where } l=n-(2r+2).
$$
We consider linear spaces
$$\begin{cases}
\Gamma_0'=\left\langle e_0,e_{r+1},\dots, e_{n}\right\rangle \\
\Gamma_1'=\left\langle e_0,e_1,e_{r+1},\dots, 
e_{2r},e_{2r+2},\dots,e_n\right\rangle \\
\vdots\\
\Gamma_{r}'=\left\langle e_0,\dots,e_r,e_{r+1},e_{2r+2},\dots,e_n\right\rangle 
\end{cases}
$$
of codimension $r$ in $\P^n$. Now given a general $q\in \G(r,n)$ corresponding to
$\Sigma_q$ our goal is to compute the dimension of the stabilizer of $q$ under the action of $B_2$.
There is only one point $w_j$ in each intersection $\Gamma_j'\cap \Sigma_q$ and
$\Sigma_q=\left\langle w_0,\dots,w_r\right\rangle$. Write $w_i=(x^i_0:\dots: x^i_r:y^i_0:\dots: y^i_r:z^i_0:\dots: z^i_l)$ for $i=0,\dots, r$. Then
$$\Sigma_q=
\begin{bmatrix}
x_0^0 &  0      & 0  & y_0^0 & \dots  &y_r^0		 & z^0_0 & \dots & z^0_l\\
\vdots&    \ddots     &   0 & \vdots&    \rddots    &    	0	&\vdots &       &     \vdots \\
x_0^r &  \ldots &x_r^r&y_0^r&  0    & 0       & z^r_0 & \dots & z^r_l    
\end{bmatrix}.
$$
Notice that $\Gamma_0',\dots,\Gamma_r'$ are stabilized by $B_2$, therefore $b\in B_2$ stabilizes $\Sigma_q$ if and only if $b$ fixes $w_0,\dots,w_r$.
Hence, setting $a_{l,l}^3=1$, we get that $b\in B_2$ stabilizes $\Sigma_q$ if only if:
\begin{footnotesize}
$$
\begin{cases}
a_{0,0}^1x_0^0															&\!=\!\lambda_0 x_0^0\\
a_{0,0}^1x_0^1+a_{0,1}^1x_1^1								&\!=\!\lambda_1 x_0^1\\
a_{0,0}^1x_0^2+a_{0,1}^1x_1^2+a_{0,2}^1x_2^2&\!=\!\lambda_2 x_0^2\\
\vdots   																		&\\
a_{0,0}^1x_0^r+\cdots +a_{0,r}^1x_r^r   		&\!=\!\lambda_r x_0^r\\
\end{cases}\
\begin{cases}
0																						&\!=\!0							 \\
a_{1,1}^1x_1^1															&\!=\!\lambda_1 x_1^1\\
a_{1,1}^1x_1^2+a_{1,2}^1x_2^2								&\!=\!\lambda_2 x_1^2\\
\vdots   																		&\\
a_{1,1}^1x_1^r+\cdots +a_{1,r}^1x_r^r   		&\!=\!\lambda_r x_1^r\\
\end{cases}\
\dots
\begin{cases}
0																						&\!=\!0							 \\
0																						&\!=\!0							 \\
\vdots   																		&\\
0																						&\!=\!0							 \\
a_{r,r}^1x_r^r  													  &\!=\!\lambda_r x_r^r
\end{cases}
$$
$$
\begin{cases}
a_{0,0}^2y_0^0+\cdots +a_{0,r-1}^2y_{r-1}^0 +a_{0,r}^2y_r^0&\!=\!\lambda_0 y_0^0\\
a_{0,0}^2y_0^1+\cdots +a_{0,r-1}^2y_{r-1}^1	&\!=\!\lambda_1 y_0^1\\
\vdots   																		&\\
a_{0,0}^2y_0^{r-1}+a_{0,1}^2y_1^{r-1}					&\!=\!\lambda_{r-1} y_0^{r-1}\\
a_{0,0}^2y_0^r 															&\!=\!\lambda_r y_0^r
\end{cases}
\dots
\begin{cases}
a_{r-1,r-1}^2y_{r-1}^0+a_{r-1,r}^2y_r^0			&\!=\!\lambda_0 y_{r-1}^0\\
a_{r-1,r-1}^2y_{r-1}^1									  	&\!=\!\lambda_1 y_{r-1}^1\\
0																						&\!=\!0\\							
\vdots   																		&\\
0																						&\!=\!0							
\end{cases}
\begin{cases}
a_{r,r}^2y_r^0  													  &\!=\!\lambda_0 y_r^0\\
0																						&\!=\!0							 \\
\vdots   																		&\\
0																						&\!=\!0							 \\
0																						&\!=\!0							 
\end{cases}
$$
$$
\begin{cases}
a_{0,0}^3z^0_0+\dots +a_{0,l}^3z^0_l&=\lambda_0 z_0^0\\
a_{0,0}^3z^1_0+\dots +a_{0,l}^3z^1_l&=\lambda_1 z_0^1\\
\vdots &\\
a_{0,0}^3z^r_0+\dots +a_{0,l}^3z^r_l&=\lambda_r z_0^r
\end{cases}
\dots
\begin{cases}
a_{l-1,l-1}^3z^0_{l-1}+a_{l-1,l}^3z^0_l&=\lambda_0 z_{l-1}^0\\
a_{l-1,l-1}^3z^1_{l-1}+a_{l-1,l}^3z^1_l&=\lambda_1 z_{l-1}^1\\
\vdots &\\
a_{l-1,l-1}^3z^r_{l-1}+a_{l-1,l}^3z^r_l&=\lambda_r z_{l-1}^r
\end{cases}
\begin{cases} 
z^0_l&=\lambda_0 z^0_l \\
z^1_l&=\lambda_1 z^1_l \\
\vdots&\\
z^r_l&=\lambda_r z^r_l
\end{cases}
$$\end{footnotesize}
for some $\lambda_0,\dots\lambda_r\in \C^*$. Therefore, from the last system of equations we get $\lambda_0=\dots=\lambda_r=1$. Then by the first systems we get $a_{ij}^1=a_{ij}^2=\delta_{ij}$
for every $i,j=0,\dots, r$.
Now, we are left with the following $l$ systems with $r+1$ equations in the variables $a_{i,j}^3$:
\begin{footnotesize}
$$
\begin{cases}
a_{0,0}^3z^0_0+\dots +a_{0,l}^3z^0_l&=z_0^0\\
a_{0,0}^3z^1_0+\dots +a_{0,l}^3z^1_l&=z_0^1\\
\vdots &\\
a_{0,0}^3z^r_0+\dots +a_{0,l}^3z^r_l&=z_0^r
\end{cases}
\begin{cases}
a_{1,1}^3z^0_1+\dots +a_{1,l}^3z^0_l&=z_1^0\\
a_{1,1}^3z^1_1+\dots +a_{1,l}^3z^1_l&=z_1^1\\
\vdots &\\
a_{1,1}^3z^r_1+\dots +a_{1,l}^3z^r_l&=z_1^r
\end{cases}
\dots
\begin{cases}
a_{l-1,l-1}^3z^0_{l-1}+a_{l-1,l}^3z^0_l&=z_{l-1}^0\\
a_{l-1,l-1}^3z^1_{l-1}+a_{l-1,l}^3z^1_l&=z_{l-1}^1\\
\vdots &\\
a_{l-1,l-1}^3z^r_{l-1}+a_{l-1,l}^3z^r_l&=z_{l-1}^r
\end{cases}.
$$
\end{footnotesize}
If $l\leq r$, that is if $n\leq 3r+2$, then 
$a_{ij}^3=\delta_{ij}$ for $i,j\leq l$, thus the dimension of the stabilizer is zero and the result follows from the dimension count in the proof of Lemma \ref{dimcount}.\\
If $l>r$, then the last $r$ systems yield $a_{ij}^3=\delta_{ij}$ for $i,j\geq l-r$. We get then $l-r$  independent systems, all of them with more variables than equations:
\begin{footnotesize}
$$
\begin{cases}
a_{0,0}^3z^0_0+\dots +a_{0,l}^3z^0_l&\!=\!z_0^0\\
a_{0,0}^3z^1_0+\dots +a_{0,l}^3z^1_l&\!=\!z_0^1\\
\vdots &\\
a_{0,0}^3z^r_0+\dots +a_{0,l}^3z^r_l&\!=\!z_0^r
\end{cases}\dots
\begin{cases}
a_{l-r-1,l-r-1}^3z^0_{l-r-1}+\dots +a_{l-r-1,l}^3z^0_l&\!=\!z_{l-r-1}^0\\
a_{l-r-1,l-r-1}^3z^1_{l-r-1}+\dots +a_{l-r-1,l}^3z^1_l&\!=\!z_{l-r-1}^1\\
\vdots &\\
a_{l-r-1,l-r-1}^3z^r_{l-r-1}+\dots +a_{l-r-1,l}^3z^r_l&\!=\!z_{l-r-1}^r
\end{cases}.
$$
\end{footnotesize}
Since the $z_i^j$ are general each system has linearly independent equations. The first system has $l+1$ variables and $r+1$ conditions, the second has $l$ variables and $r+1$ conditions, and so on up to the last system which has $r+2$ variables and $r+1$ conditions. Therefore, the dimension of the stabilizer is given by 
$$\mbox{\# of variables} - \mbox{\# of conditions}\!=\!
(l-r)\!+\!(l-r-1)\!+\!\dots+1\!=
\!\dfrac{(l-r)(l-r+1)}{2}.$$
Then the dimension of the orbit is
$$(r+1)(r+2)+\dfrac{(l+1)(l+2)}{2}-1-\dfrac{(l-r)(l-r+1)}{2}=\dim(\G(r,n))-\dfrac{r}{2}(r-1).$$
For the last claim note that we have $4r+1> 3r+2$ whenever $r\geq 2$.
\end{proof}

Using the results in Section \ref{sGbu}, Lemma \ref{dimcount} and Proposition  \ref{G(r,n)2 spherical} we get the following.

\begin{Corollary}\label{corollary k=2 c=1}
For the complexity of
$\G(r,n)_2$ we have
$$c(\G(r,n)_2)\begin{cases}
=0 & \mbox{if } \mbox{either } r=1 \mbox{ or }  n=2r+1 \mbox{ or } n=2r+2\\
=1 & \mbox{if } r=2 \mbox{ and } n\geq 7  \\
>1 & \mbox{otherwise } 
\end{cases}.$$
\end{Corollary}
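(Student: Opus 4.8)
Corollary~\ref{corollary k=2 c=1} is a bookkeeping statement: it simply reorganizes the previously established facts about $c(\G(r,n)_2)$ into a single clean trichotomy. The plan is therefore to split into the same ranges of $(r,n)$ that the earlier results cover and verify that in each range the asserted value of the complexity holds.

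First I would dispose of the case $r=1$. Here $n\geq 2r+1=3$. For $n=3,4$ the Grassmannian is a quadric ($\G(1,3)$) or has $n\in\{2r+1,2r+2\}$, and for $n\geq 5$ Proposition~\ref{G(1,n)2 spherical} shows $\G(1,n)_2$ is spherical; in all subcases $c(\G(1,n)_2)=0$. Equivalently, the cases $n=2r+1$ and $n=2r+2$ (for arbitrary $r\geq 1$) are handled by Propositions~\ref{G(r,2r+1)2 spherical} and~\ref{G(r,2r+2)2 spherical}, which exhibit a dense Borel orbit and hence give complexity $0$. This covers the first line of the trichotomy.

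Next I would treat $r\geq 2$ together with $n\geq 2r+2$ using Proposition~\ref{G(r,n)2 spherical}, which gives the exact value
$$c(\G(r,n)_2)=\begin{cases}\dfrac{(n-(2r+2))((4r+1)-n)}{2} & \text{if } n\leq 3r+2,\\[2mm] \dfrac{r(r-1)}{2} & \text{if } n>3r+2.\end{cases}$$
For $r=2$: when $5\leq n\leq 8$ the first branch gives $\tfrac{(n-6)(9-n)}{2}$, which equals $0$ for $n\in\{5,6\}$ (already covered above, since $n=2r+1,2r+2$) and equals $1$ for $n\in\{7,8\}$; when $n>8=3r+2$ the second branch gives $\tfrac{r(r-1)}{2}=1$. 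Hence $c(\G(2,n)_2)=1$ exactly when $n\geq 7$, which is the second line. For $r\geq 3$: if $n>3r+2$ then $c=\tfrac{r(r-1)}{2}\geq 3>1$; if $2r+2<n\leq 3r+2$ then Lemma~\ref{dimcount} (or the first branch above) gives $c\geq \tfrac{(n-2r-2)(4r+1-n)}{2}$, and since the two factors are positive integers this is $\geq 1$, with equality forced only when one factor is $1$ — but a short check of the endpoints $n=2r+3$ and $n=3r+2$ shows the other factor is then $2r-2\geq 4$, so in fact $c\geq 2$ throughout this middle range when $r\geq 3$. Combined with the $n\in\{2r+1,2r+2\}$ cases (complexity $0$), this shows that for $r\geq 3$ we always have $c(\G(r,n)_2)\in\{0\}\cup\{c:c>1\}$, never exactly $1$, which is the third line.

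There is no real obstacle here — everything reduces to elementary arithmetic with the formulas already proved. The only point requiring a little care is making sure the boundary cases $n=2r+1,2r+2$ (where the general formula of Proposition~\ref{G(r,n)2 spherical} is not asserted, or degenerates) are correctly attributed to the spherical results of Section~\ref{sGbu}, so that no value of $n$ is double-counted or omitted; I would state this explicitly when assembling the three cases.
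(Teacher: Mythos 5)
Your proposal is correct and matches the paper's own argument, which is exactly this collation: the spherical cases of Section \ref{sGbu} give the zero-complexity line, and Lemma \ref{dimcount} together with the exact values in Proposition \ref{G(r,n)2 spherical} give the $r=2$ line and rule out complexity one for $r\geq 3$. Your endpoint check in the range $2r+2<n\leq 3r+2$ is slightly garbled (the only in-range case with a factor equal to $1$ is $n=2r+3$, where the other factor is $2r-2\geq 4$), but the conclusion $c\geq 2$ there for $r\geq 3$ is right, so this is only a presentational slip.
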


\subsection{Blow-up at three or more general points}
Now we take into account the blow-ups at three or more general points. By Corollary \ref{corollary k=2 c=1} we know that $c(\G(r,n)_2)>1$ for $r\geq 3$ whenever $n>2r+2$. Since by Lemma \ref{Bkisenough} the complexity can not decrease under blow-up, Proposition~\ref{G(r,2r+2)3 spherical} below ensures that $c(\G(r,n)_3)\leq 1$ forces $r\in \{1,2\}$. Similarly, we will estimate $c(\G(r,n)_4)$ and $c(\G(r,n)_5)$ only when $c(\G(r,n)_3)\leq 1$.

\begin{Proposition}\label{G(r,2r+2)3 spherical}
For any $r$ we have $c(\G(r,2r+1)_3)\geq  r+1$, $c(\G(r,2r+2)_3)\geq 2r+1$. Furthermore, if $r\geq 2$ then $c(\G(r,2r+3)_3)\geq 3r$.
\end{Proposition}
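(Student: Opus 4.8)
The plan is to reduce everything, via Lemma~\ref{Bkisenough}, to a dimension count for a Borel subgroup. Recall that $G_3\subset SL_{n+1}$ denotes the stabilizer of three general $(r+1)$-dimensional subspaces $p_1,p_2,p_3\subset\C^{n+1}$, that $U_3$ is its unipotent radical, $G_3^{red}=G_3/U_3$, and $B_3$ a Borel subgroup of $G_3^{red}$. By Lemma~\ref{Bkisenough} the complexity $c(\G(r,n)_3)$ equals the codimension of a general $B_3$-orbit on $\G(r,n)_3$; since $G_3$ fixes $p_1,p_2,p_3$, the exceptional divisors are $B_3$-invariant, so this number also equals the codimension of a general $B_3$-orbit on $\G(r,n)$. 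As every $B_3$-orbit has dimension at most $\dim B_3$, we get
\[
c(\G(r,n)_3)\ \geq\ \dim\G(r,n)-\dim B_3 ,
\]
so it suffices, in each of the three ranges for $n$, to determine $G_3$ explicitly, identify $U_3$ and $G_3^{red}$, and bound $\dim B_3=\tfrac12\bigl(\dim G_3^{red}+\rk G_3^{red}\bigr)$ from above.

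For $n=2r+1$ the argument is short. Since $\dim\C^{2r+2}=2(r+1)$ we may assume $\C^{2r+2}=p_1\oplus p_2$ and that $p_3=\{v+\phi(v):v\in p_1\}$ is the graph of a fixed isomorphism $\phi\colon p_1\to p_2$; a matrix preserving $p_1$ and $p_2$ is block diagonal $\mathrm{diag}(g_1,g_2)$, and it preserves $p_3$ precisely when $g_2=\phi g_1\phi^{-1}$. Choosing bases so that $\phi$ is the identity, $G_3\cong\{g_1\in GL_{r+1}:\det g_1=\pm1\}$, which is reductive; hence $G_3^{red}=G_3$ and $\dim B_3=\tfrac{(r+1)(r+2)}{2}-1$. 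As $\dim\G(r,2r+1)=(r+1)^2$, the displayed inequality yields $c(\G(r,2r+1)_3)\geq\tfrac{r(r+1)}{2}+1\geq r+1$ for every $r\geq1$.

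For $n=2r+2$ and $n=2r+3$ I would run the same type of computation as in Propositions~\ref{G(r,2r+2)2 spherical} and~\ref{G(r,n)2 spherical}. Fix $p_1,p_2$ as coordinate $(r+1)$-planes, so that $G_2=\Stab_{SL_{n+1}}(p_1,p_2)$ consists of the block-triangular matrices $\left(\begin{smallmatrix}A_1&0&B_1\\0&A_2&B_2\\0&0&A_3\end{smallmatrix}\right)$, the last block $A_3$ being $1\times1$ when $n=2r+2$ and $2\times2$ when $n=2r+3$. Choose a generic $p_3$ in a normal form adapted to $p_1,p_2$ (for $n=2r+2$ one can take the point $p_0$ from the proof of Proposition~\ref{G(r,2r+2)2 spherical}, which lies in the dense $G_2$-orbit), impose $g\cdot p_3=p_3$ on a general element $g\in G_2$, and solve the resulting system of linear equations. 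This produces an explicit parametrization of $G_3=\Stab_{G_2}(p_3)$; reading off which parameter directions consist of unipotent elements identifies the unipotent radical $U_3$, and the remaining directions together with the determinant relation describe $G_3^{red}$ and its rank. Substituting the resulting value of $\dim B_3$, with $\dim\G(r,2r+2)=(r+1)(r+2)$ and $\dim\G(r,2r+3)=(r+1)(r+3)$, gives $c(\G(r,2r+2)_3)\geq 2r+1$ and $c(\G(r,2r+3)_3)\geq 3r$. The restriction $r\geq2$ in the last statement is essential: for $r=1$ one has $2r+3=5$ and $\G(1,5)_3$ is spherical by Proposition~\ref{G(1,5)3 spherical}, so its complexity is $0$.

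The main obstacle is exactly this middle step for $n=2r+2,2r+3$: here $G_3$ is genuinely non-reductive, so after solving the stabilizer equations one must keep careful track of which coordinate directions are unipotent in order to separate $U_3$ from $G_3^{red}$ before computing the Borel dimension; keeping $p_1,p_2$ in coordinate form while still choosing $p_3$ generic enough is what makes the linear algebra tractable. Once $\dim B_3$ is pinned down the conclusion is immediate. Finally, these bounds are not meant to be sharp---for $r\geq2$ the dimension count above already gives strictly larger values---only the uniform numbers $r+1$, $2r+1$, $3r$ are recorded, since all that is needed downstream is that these complexities exceed $1$.
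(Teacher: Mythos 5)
Your reduction via Lemma \ref{Bkisenough} and your treatment of the first case are fine: for $n=2r+1$ the three general subspaces can be normalized so that $G_3\cong\{\mathrm{diag}(g,g):\det(g)^2=1\}$, which is reductive with identity component $SL_{r+1}$, and the resulting bound $c(\G(r,2r+1)_3)\geq \frac{r(r+1)}{2}+1\geq r+1$ is correct (indeed stronger than what is claimed). The genuine gap is in the two remaining cases $n=2r+2$ and $n=2r+3$, which are the substance of the proposition: there you only describe a strategy --- compute $G_3$ explicitly, split off $U_3$, determine $G_3^{red}$ and its rank, plug into $\dim B_3=\frac12(\dim G_3^{red}+\rk G_3^{red})$ --- and then assert that ``substituting the resulting value of $\dim B_3$'' yields $2r+1$ and $3r$. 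The decisive computation is never carried out, so these two inequalities are not established. This is not a routine omission: for $n=2r+2$ a parameter count already gives $\dim G_3\geq \dim SL_{2r+3}-3\dim\G(r,2r+2)=(r+1)(r+2)$, while the bound you want needs $\dim B_3\leq \dim\G(r,2r+2)-(2r+1)=r^2+r+1$; so one must actually exhibit a normal unipotent subgroup of dimension at least $2r+1$ and control the rank of the quotient, and nothing of the sort is done. Moreover, ``reading off which parameter directions consist of unipotent elements'' is not a valid way to identify $U_3$: unipotent elements need not lie in the unipotent radical (think of $SL_2$), so even the outlined procedure would have to be replaced by an honest Levi decomposition argument.

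For comparison, the paper sidesteps the computation of $U_3$ and $G_3^{red}$ entirely. For $n=2r+3$ it replaces $p_3$ by the small subspace $p_3'=\langle e_{2r+2},e_{2r+3},v_1\rangle$ with $v_1=e_0+\cdots+e_n$, observes that up to conjugation $B_3$ is contained in the subgroup $B_3'$ of $SL_{n+1}$ stabilizing the three flags $\mathcal{F}_1,\mathcal{F}_2,\mathcal{F}_3'$, and computes $\dim B_3'$ directly: such matrices have three upper-triangular diagonal blocks and satisfy $2r+1$ additional linear conditions, so $\dim B_3'=r(r+1)+3$ and $c(\G(r,2r+3)_3)\geq(r+1)(r+3)-(r(r+1)+3)=3r$; the cases $n=2r+1$ and $n=2r+2$ are handled analogously with $p_3'$ of dimension $1$ and $2$. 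Either this flag-stabilizer trick, or an actual determination of a Levi subgroup of $G_3$ in your parametrization, is what is missing from your write-up; as it stands only the $n=2r+1$ inequality is proved.
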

\begin{proof}
This is a dimension count, we will compare the dimensions of $\G(r,n)$ and $B_3$. We will develop in full detail the case $n=2r+3$, the other two cases are similar. We may assume that the three general points in $\G(r,n)$ are
$$p_1=[\left\langle e_0,\dots,e_r \right\rangle],
p_2=[\left\langle e_{r+1},\dots,e_{2r+1}\right\rangle],\mbox{ and }
p_3=[\left\langle e_{2r+2}, e_{2r+3}, v_1,\dots , v_{r-1} \right\rangle],$$
where $v_1,\dots,v_{r-1}\in \C^{n+1}$ are general, and we may assume that $v_1:=e_0+\dots+e_{n}$. Consider the linear space 
$p_3'=[\left\langle e_{2r+2}, e_{2r+3}, v_1\right\rangle],$
the group
$$G_3'=\{g\in SL_{n+1}: gp_1=p_1, gp_2=p_2,gp_3'=p_3'\}\subset SL_{n+1},$$
and a Borel subgroup $B_3'$ of $G_3'^{red}=G_3'/U_3',$ where $U_3'$ is the unipotent radical of $G_3'$.
Since $G_3\subset G_3'$, up to conjugation we have $B_3\subset B_3'$. Note that $B_3'$ is the subgroup of $SL_{n+1}$ which stabilizes the three flags:
\begin{align*}
&\mathcal{F}_1: \left\langle e_0 \right\rangle \subset 
\left\langle e_0,e_1 \right\rangle \subset \dots \subset
\left\langle e_0,\dots,e_r \right\rangle\\
&\mathcal{F}_2: \left\langle e_{r+1} \right\rangle \subset 
\left\langle e_{r+1},e_{r+2} \right\rangle \subset \dots \subset
\left\langle e_{r+1},\dots,e_{2r+1} \right\rangle\\
&\mathcal{F}_3': \left\langle e_{2r+2} \right\rangle \subset 
\left\langle e_{2r+2}, e_{2r+3} \right\rangle \subset
\left\langle e_{2r+2}, e_{2r+3}, v_1 \right\rangle.
\end{align*}
Then the elements of $B_3'$ are matrices with three upper triangular blocks, and with an additional linear condition on the coefficients of each row from the second up to the $(2r+2)$-th one. Therefore
$$\dim(B_3')=\left((r+1)(r+2)+2\right)-
(2r+1)=r(r+1)+3,$$
and since $B_3 \subset B_3'$ we have $c(\G(r,2r+3)_3)\geq \dim(\G(r,2r+3))-\dim(B_3')=(r+1)(r+3)-(r(r+1)+3)=3r$.
\end{proof}

The next result completes the case $k=3,r=2$.

\begin{Proposition}\label{G(r,n)3 n small}
For the complexity of $\G(2,n)_3$ we have
$$c(\G(2,n)_3) \begin{cases}
\geq 3 & \mbox{if } n=5\\
\geq 5 & \mbox{if } n=6\\
\geq 6 & \mbox{if } n=7\\
=1 & \mbox{if } n=8\\
>1 & \mbox{if } n>8
\end{cases}.$$ 
Moreover, $c(\G(2,8)_4)>1$.
\end{Proposition}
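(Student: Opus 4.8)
The cases $n=5,6,7$ are exactly the bounds $c(\G(r,2r+1)_3)\geq r+1$, $c(\G(r,2r+2)_3)\geq 2r+1$, $c(\G(r,2r+3)_3)\geq 3r$ of Proposition \ref{G(r,2r+2)3 spherical} read with $r=2$. For $\G(2,8)_4$ I would use a crude dimension count: four general $2$--planes $p_1,p_2,p_3,p_4\subset\P^8$ may be normalized so that $p_1,p_2,p_3$ are spanned by the three disjoint triples of coordinate points of $\P^8$ (hence $p_1\oplus p_2\oplus p_3=\C^9$) and $p_4$ is the diagonal plane; a matrix of $SL_9$ fixing $p_1,p_2,p_3$ is block diagonal with blocks $A_1,A_2,A_3\in GL_3$, and fixing $p_4$ as well forces $A_1=A_2=A_3$. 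Thus $G_4=\{(A,A,A):A\in GL_3,\ \det(A)^3=1\}$ is reductive, its unipotent radical is trivial, and a Borel subgroup of $G_4$ has dimension at most $6$; since $\dim\G(2,8)=18$, Lemma \ref{Bkisenough} gives $c(\G(2,8)_4)\geq 18-6>1$.

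For $\G(2,n)_3$ with $n\geq 8$ the plan is to work out the generic stabilizer explicitly. Choose $p_1=[\langle e_0,e_1,e_2\rangle]$, $p_2=[\langle e_3,e_4,e_5\rangle]$, $p_3=[\langle e_6,e_7,e_8\rangle]$, a general triple for $n\geq 8$, and set $W=\langle e_9,\dots,e_n\rangle$, $m=\dim W=n-8$. Then $G_3^{red}\cong\bigl((GL_3)^3\times GL_m\bigr)/\{\det=1\}$, so $\dim B_3=17+\tbinom{m+1}{2}$, and by Lemma \ref{Bkisenough} the complexity equals $\dim\G(2,n)-\dim(\text{generic }B_3\text{--orbit})$, where $B_3$ acts through block diagonal matrices $(b_1,b_2,b_3,b_4)$ with the $b_i$ upper triangular and $\det(b_1)\det(b_2)\det(b_3)\det(b_4)=1$. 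A general $\Sigma\in\G(2,n)$ projects isomorphically onto each of $p_1,p_2,p_3$, hence is the graph $\Sigma=\{(v,Mv,Nv,Pv):v\in\C^3\}$ for a general $(M,N,P)$ with $M,N\in GL_3$, and $(b_1,b_2,b_3,b_4)$ fixes $\Sigma$ if and only if $b_2=Mb_1M^{-1}$, $b_3=Nb_1N^{-1}$ and $b_4P=Pb_1$.

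For $m=0$ (so $n=8$) the stabilizer of $\Sigma$ is isomorphic to $\{b\in B:\ MbM^{-1}\in B,\ NbN^{-1}\in B,\ \det(b)^3=1\}$, $B\subset GL_3$ the upper triangular Borel; for general $M,N$ the first two conditions cut out a maximal torus, the third intersects it in the center of $GL_3$, and $\det(b)^3=1$ reduces that to a finite group, so the generic orbit has the maximal dimension $\dim B_3=17$ and $c(\G(2,8)_3)=18-17=1$. For $m=1,2$ (so $n=9,10$) already the trivial bound $c(\G(2,n)_3)\geq\dim\G(2,n)-\dim B_3=3m+1-\tbinom{m+1}{2}$ gives $c\geq 3$. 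For $m\geq 3$ (so $n\geq 11$) one instead exhibits a positive dimensional stabilizer: for $a\in\C^{*}$ and any upper triangular $b_4\in GL_m$ with $b_4P=aP$, the element $(aI_3,aI_3,aI_3,b_4)$ fixes $\Sigma$; since for a general $P\colon\C^3\hookrightarrow\C^m$ the upper triangular solutions of $b_4P=0$ form a space of dimension $\tbinom{m-2}{2}$ (a row--by--row count), this family has dimension $1+\tbinom{m-2}{2}$, and the determinant relation leaves inside the stabilizer a subvariety of dimension $\tbinom{m-2}{2}$. Therefore
\[
c(\G(2,n)_3)\ \geq\ \Bigl(3m+1-\tbinom{m+1}{2}\Bigr)+\tbinom{m-2}{2}\ =\ 4\ >\ 1 .
\]

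All the ingredients are elementary: $\dim B_3=17+\tbinom{m+1}{2}$, obtained from the Levi of the stabilizer of three coordinate planes (so that $B_3$ acts through block diagonal matrices); the graph description of a general $\Sigma$ with the reformulated fixing condition; the genericity fact $B\cap M^{-1}BM\cap N^{-1}BN=\C^{*}\cdot\mathrm{Id}$ for general $M,N$; and the count $\dim\{b_4\ \text{upper triangular}:b_4P=0\}=\tbinom{m-2}{2}$ for general $P$. The only point requiring some care is keeping the three regimes $n=8$, $n\in\{9,10\}$, $n\geq 11$ apart and checking these genericity statements; a more careful analysis of the interaction between upper triangularity of $b_4$ and the general $3$--plane $P\C^3\subset W$ should even yield an exact value for $c(\G(2,n)_3)$ varying with $m=n-8$, much as in Corollary \ref{corollary k=2 c=1}.
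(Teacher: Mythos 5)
Your proposal is correct, and while it follows the paper's general skeleton (reduce to the Borel $B_k$ of the Levi via Lemma \ref{Bkisenough}, then compare $\dim B_k$ with $\dim\G(2,n)$ and the generic stabilizer; the cases $n=5,6,7$ are cited identically from Proposition \ref{G(r,2r+2)3 spherical}), the execution differs in all three remaining parts. For $n=8$ the paper simply observes that a third general plane may be taken to be the third coordinate block, so $B_3=B_2$ and $c(\G(2,8)_3)=c(\G(2,8)_2)=1$ by Corollary \ref{corollary k=2 c=1}; your direct computation via the graph description $\Sigma=\{(v,Mv,Nv)\}$ and the fact that $B\cap M^{-1}BM\cap N^{-1}BN$ consists of scalars for general $M,N$ is a valid, self-contained replacement (both genericity facts you isolate are true: two general Borels of $GL_3$ meet in a maximal torus, and a general Borel meets a fixed maximal torus only in the center, as one sees by asking which non-central diagonalizable elements fix a general flag). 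For $n>8$ the paper solves the full linear system for the stabilizer, as in Proposition \ref{G(r,n)2 spherical}, and obtains the exact values $c=3$ ($n=9$) and $c=4$ ($n\geq 10$); you instead use the crude bound $\dim\G(2,n)-\dim B_3$ for $m=1,2$ and, for $m\geq 3$, the explicit stabilizer family $(aI_3,aI_3,aI_3,b_4)$ with $b_4P=aP$, whose dimension $\binom{m-2}{2}$ (your row-by-row count is correct, and in fact the inequality $\geq\binom{m-2}{2}$ holds for every $P$, so no genericity is even needed for the lower bound) exactly offsets the growth of $\dim B_3$ and gives $c\geq 4$ uniformly — enough for the statement, though weaker than the paper's exact computation. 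For $\G(2,8)_4$ the paper only bounds the dimension of a solvable overgroup $B_4'$ stabilizing partial flags ($\dim B_4'=9$, hence $c\geq 9$), whereas your normalization of the fourth plane to the diagonal identifies $G_4=\{\mathrm{diag}(A,A,A):\det(A)^3=1\}$ exactly, giving the stronger and cleaner bound $c\geq 12$ (the Borel of the identity component $SL_3$ even has dimension $5$). Two cosmetic slips: $G_3^{red}$ is the subgroup of $(GL_3)^3\times GL_m$ with product of determinants equal to $1$, not a quotient by it, and when invoking the determinant relation in the $m\geq 3$ family one should note, as you implicitly do, that $a^9\det(aI_m+u)$ is non-constant on the irreducible family $\C^*\times\{u:uP=0\}$, so cutting by it drops the dimension by at most one.
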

\begin{proof}
The first three cases follow from Proposition \ref{G(r,2r+2)3 spherical}. For the fourth one it is enough to observe that since $B_3=B_2$ we have $c(\G(2,8)_3)=c(\G(2,8)_2)=1$. 

In order to compute $c(\G(2,n)_3),$ $n>8,$ we proceed as in the proof of Proposition \ref{G(r,n)2 spherical}. Here 
\begin{footnotesize}
$$
B_3\!=\!\left\{
\begin{pmatrix}
	A_1& 0   & 0   & 0   \\
	0  & A_2 & 0   & 0 \\
    0  & 0   & A_3 & 0\\	
    0  & 0   & 0   & A_4\\	
\end{pmatrix}\right\},
\mbox{where }A_i\!=\!\begin{bmatrix}
a_{0,0}^i & a_{0,1}^i & a_{0,2}^i	\\
          & a_{1,1}^i & a_{1,2}^i 	\\
          &           & a_{2,2}^i
\end{bmatrix}
\mbox{for } i\!=\!1,2,3,
\mbox{ and }
A_4\!=\!\begin{bmatrix}
a_{0,0}^4 &  \ldots & a_{0,l}^4	\\
          &    \ddots&	\vdots\\
       &             & a_{l,l}^4
\end{bmatrix}
$$
\end{footnotesize}
where $l=n-9.$
We consider three linear spaces
$$\begin{cases}
\Gamma_0'=\left\langle e_0,e_{3},\dots, e_{n}\right\rangle \\
\Gamma_1'=\left\langle e_0,e_1,e_3,e_4,e_6,\dots,e_n\right\rangle \\
\Gamma_{2}'=\left\langle e_0,e_1,e_2,e_3,e_6,\dots,e_n \right\rangle 
\end{cases}
$$
of codimension $2$ in $\P^n$, and a general $q\in \G(2,n)$ corresponding to
$$\Sigma_q=\left\langle w_0,w_1,w_2\right\rangle=
\begin{bmatrix}
x_0^0 &  0      & 0  & y_0^0 & y_1^0  &y_2^0		 & t_0^0 & t_1^0 & t_2^0
& z_0^0 & \dots & z_l^0\\
x_0^1&    x_1^1    &   0 & y_0^1&    y_1^1    &    	0	&t_0^1 &   t_1^1    &    t_2^1
& z_0^1 & \dots & z_l^1\\
x_0^2 &  x_1^2 &x_2^2&y_0^2&  0    & 0       & t^2_0 & t_1^2 & t^2_2   
&z_0^2 & \dots & z_l^2\\
\end{bmatrix}.
$$
Since $\Gamma_0',\Gamma_1',\Gamma_2'$ are stabilized by $B_2$, they are stabilized by $B_3$ as well, and therefore $b\in B_3$ stabilizes $\Sigma_q$ if and only if $b$ fixes $w_0,w_1,$ and $w_2$. Hence, setting $a_{l,l}^4=1$, we get that $b\in B_3$ stabilizes $\Sigma_q$ if only if
\begin{footnotesize}
$$
\begin{cases}
a_{0,0}^1x_0^0&\!=\!\lambda_0 x_0^0\\
a_{0,0}^1x_0^1+a_{0,1}^1x_1^1								&\!=\!\lambda_1 x_0^1\\
a_{0,0}^1x_0^2+a_{0,1}^1x_1^2+a_{0,2}^1x_2^2&\!=\!\lambda_2 x_0^2
\end{cases}\
\begin{cases}
0&\!=\!0 \\
a_{1,1}^1x_1^1	&\!=\!\lambda_1 x_1^1\\
a_{1,1}^1x_1^2+a_{1,2}^1x_2^2								&\!=\!\lambda_2 x_1^2
\end{cases}
\begin{cases}
0		&\!=\!0	 \\
0		&\!=\!0	 \\
a_{2,2}^1x_2^2   &\!=\!\lambda_2 x_2^2
\end{cases}
$$
$$
\begin{cases}
a_{0,0}^2y_0^0+a_{0,1}^2y_1^0+a_{0,2}^2y_2^0&\!=\!\lambda_0 y_0^0\\
a_{0,0}^2y_0^1+a_{0,1}^2y_1^1&\!=\!\lambda_1 y_0^1\\
a_{0,0}^1y_0^2&\!=\!\lambda_2 y_0^2
\end{cases}
\begin{cases}
a_{1,1}^2y_1^0+a_{1,2}^2y_2^0&\!=\!\lambda_0 y_1^0\\
a_{1,1}^2y_1^1&\!=\!\lambda_1 y_1^1\\
0&\!=\!0
\end{cases}
\begin{cases}
a_{2,2}^2 y_2^0 &\!=\!\lambda_0 y_2^0\\
0		&\!=\!0	 \\
0		&\!=\!0	 
\end{cases}
$$
$$
\begin{cases}
a_{0,0}^3t^0_0+a_{0,1}^3t^0_1 +a_{0,2}^3t^0_2&\!=\!\lambda_0 t_0^0\\
a_{0,0}^3t^1_0+a_{0,1}^3t^1_1 +a_{0,2}^3t^1_2&\!=\!\lambda_1 t_0^1\\
a_{0,0}^3t^2_0+a_{0,1}^3t^2_1+a_{0,2}^3t^2_2&\!=\!\lambda_2 t_0^2
\end{cases}
\begin{cases}
a_{1,1}^3t^0_1 +a_{1,2}^3t^0_2&\!=\!\lambda_0 t_1^0\\
a_{1,1}^3t^1_1 +a_{1,2}^3t^1_2&\!=\!\lambda_1 t_1^1\\
a_{1,1}^3t^2_1+a_{1,2}^3t^2_2&\!=\!\lambda_2 t_1^2
\end{cases}
\begin{cases}
a_{2,2}^3t^0_2&\!=\!\lambda_0 t_2^0\\
a_{2,2}^3t^1_2&\!=\!\lambda_1 t_2^1\\
a_{2,2}^3t^2_2&\!=\!\lambda_2 t_2^2
\end{cases}
$$
$$
\begin{cases}
a_{0,0}^4z^0_0+\dots +a_{0,l}^4z^0_l&\!=\!\lambda_0 z_0^0\\
a_{0,0}^4z^1_0+\dots +a_{0,l}^4z^1_l&\!=\!\lambda_1 z_0^1\\
a_{0,0}^4z^2_0+\dots +a_{0,l}^4z^2_l&\!=\!\lambda_2 z_0^2
\end{cases}
\dots
\begin{cases}
a_{l-1,l-1}^4z^0_{l-1}+a_{l-1,l}^4z^0_l&\!=\!\lambda_0 z_{l-1}^0\\
a_{l-1,l-1}^4z^1_{l-1}+a_{l-1,l}^4z^1_l&\!=\!\lambda_1 z_{l-1}^1\\
a_{l-1,l-1}^4z^2_{l-1}+a_{l-1,l}^4z^2_l&\!=\!\lambda_2 z_{l-1}^2
\end{cases}
\begin{cases} 
z^0_l&\!=\!\lambda_0 z^0_l \\
z^1_l&\!=\!\lambda_1 z^1_l \\
z^2_l&\!=\!\lambda_2 z^2_l
\end{cases}
$$\end{footnotesize}
for some $\lambda_0,\lambda_1,\lambda_2\in \C^*$. Therefore, from the last system of equations we get $\lambda_0=\dots=\lambda_r=1$. Then by the first nine systems we get $a_{ij}^1=a_{ij}^2=a_{ij}^3=\delta_{ij}$ for every $i,j=0,1,2$. Now, we are left with the following $l$ systems with $3$ equations in the variables $a_{i,j}^4$
\begin{footnotesize}
$$
\begin{cases}
a_{0,0}^4z^0_0+\dots +a_{0,l}^4z^0_l&=z_0^0\\
a_{0,0}^4z^1_0+\dots +a_{0,l}^4z^1_l&=z_0^1\\
a_{0,0}^4z^2_0+\dots +a_{0,l}^4z^2_l&=z_0^2
\end{cases}
\dots
\begin{cases}
a_{l-1,l-1}^4z^0_{l-1}+a_{l-1,l}^4z^0_l&= z_{l-1}^0\\
a_{l-1,l-1}^4z^1_{l-1}+a_{l-1,l}^4z^1_l&= z_{l-1}^1\\
a_{l-1,l-1}^4z^2_{l-1}+a_{l-1,l}^4z^2_l&=z_{l-1}^2
\end{cases}
$$
\end{footnotesize}
If $l\leq 2$, that is $n=9,10$ or $11,$ then $a_{ij}^4=\delta_{ij}$ for $i,j\leq l$. Thus the dimension of the stabilizer is zero and then 
$$ c(\G(2,n)_3)= \dim(\G(2,n))-\dim(B_3)=
3(n-2)-\left(3\cdot6+\dfrac{(l+1)(l+2)}{2}
-1\right)=
\begin{cases}
3 & \mbox{if } n=9 \\
4 & \mbox{if } n=10\\
4 & \mbox{if } n=11
\end{cases}.
$$
If $l>2$, then the last two systems yield $a_{ij}^4=\delta_{ij}$ for $i,j\geq l-2$. We get then $l-2$ independent systems
\begin{footnotesize}
$$
\begin{cases}
a_{0,0}^4z^0_0+\dots +a_{0,l}^4z^0_l&\!=\!z_0^0\\
a_{0,0}^4z^1_0+\dots +a_{0,l}^4z^1_l&\!=\!z_0^1\\
a_{0,0}^4z^r_0+\dots +a_{0,l}^4z^r_l&\!=\!z_0^2
\end{cases}\dots
\begin{cases}
a_{l-3,l-3}^4z^0_{l-3}+\dots +a_{l-3,l}^4z^0_l&\!=\!z_{l-3}^0\\
a_{l-3,l-3}^4z^1_{l-3}+\dots +a_{l-3,l}^4z^1_l&\!=\!z_{l-3}^1\\
a_{l-3,l-3}^4z^r_{l-3}+\dots +a_{l-3,l}^4z^r_l&\!=\!z_{l-3}^2
\end{cases}
$$
\end{footnotesize}
all of them with more variables than equations. Since the $z_i^j$'s are general the equations in each system are linearly independent. The first system has $l+1$ variables, the second has $l$ variables, and so on up to the last one which has four variables. Therefore, the dimension of the stabilizer is given by 
$$\mbox{\# of variables} - \mbox{\# of conditions}\!=\!
(l-2)\!+\!(l-3)\!+\!\dots+1\!=
\!\dfrac{(l-2)(l-1)}{2}.$$
Then for $n\geq 12$ the complexity is given by:
\begin{eqnarray*}
c(\G(2,n)_3)=\dim(\G(2,n))- \dim(B_3)+\dim(\mbox{stabilizer})\\
=3(n-2)-
\left(3\cdot 6+\dfrac{(l+1)(l+2)}{2}-1\right)+
\dfrac{(l-2)(l-1)}{2}=4.
\end{eqnarray*}
Finally, for $\G(2,8)_4$ proceeding as in the proof of Proposition \ref{G(r,2r+2)3 spherical} we get
\begin{footnotesize}
$$
B_4'\!=\!\left\{
\begin{pmatrix}
	A_1 & 0  & 0  \\
	0  & A_2 & 0  \\
  0  & 0   & A_3\\	
\end{pmatrix}\right\},
\mbox{where }
A_1\!=\!\begin{bmatrix}
a_{0,0}^1 & a_{0,1}^1 & a_{0,2}^1	\\
          & a_{1,1}^1 & 
a_{0,0}^1+a_{0,1}^1+a_{0,2}^1-a_{1,1}^1 	\\
          &           & a_{0,0}^1+a_{0,1}^1+a_{0,2}^1
\end{bmatrix},
$$
$$
A_2\!=\!\begin{bmatrix}
a_{0,0}^2 & a_{0,1}^2 & a_{0,0}^1+a_{0,1}^1+a_{0,2}^1-a_{0,0}^2- a_{0,1}^2	\\
          & a_{1,1}^2 & a_{0,0}^1+a_{0,1}^1+a_{0,2}^1-a_{1,1}^2  	\\
          &           & a_{0,0}^1+a_{0,1}^1+a_{0,2}^1 
\end{bmatrix}, \mbox{and }
A_3\!=\!\begin{bmatrix}
a_{0,0}^3 & a_{0,1}^3 & a_{0,0}^1+a_{0,1}^1+a_{0,2}^1 -a_{0,0}^3-a_{0,1}^3\\
          & a_{1,1}^3 & a_{0,0}^1+a_{0,1}^1+a_{0,2}^1- a_{1,1}^3 	\\
          &           & a_{0,0}^1+a_{0,1}^1+a_{0,2}^1
\end{bmatrix}.
$$

\end{footnotesize}
Therefore $\dim(B_4')=9$ and $c(\G(2,8)_4)\geq 18-9=9$.
\end{proof}

Now, we are left only with the case $\G(1,n)_k, k\geq 3$.

\begin{Proposition}\label{G(1,n)3 spherical}
For the complexity of $\G(1,n)_3$ and $\G(1,n)_4$ we have
$$c(\G(1,n)_3)\begin{cases}
\geq 2 & \mbox{if } n=3\\
\geq 3 & \mbox{if } n=4\\
=0 & \mbox{if } n=5\\
=1 & \mbox{if } n\geq 6
\end{cases},\quad\quad
c(\G(1,n)_4)\begin{cases}
\geq 5 & \mbox{if } n=5\\
\geq 6 & \mbox{if } n=6\\
=1 & \mbox{if } n=7\\
=2 & \mbox{if } n\geq 8
\end{cases}.$$ 
Moreover, $c(\G(1,7)_5)\geq 8$.
\end{Proposition}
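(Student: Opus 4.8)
The plan is to split the statement according to the size of $n$ relative to $k$, following closely the arguments of Propositions~\ref{G(r,2r+2)3 spherical}, \ref{G(r,n)2 spherical} and~\ref{G(r,n)3 n small}. The bounds $c(\G(1,3)_3)\ge 2$ and $c(\G(1,4)_3)\ge 3$ are simply the cases $(r,n)=(1,2r+1)$ and $(r,n)=(1,2r+2)$ of Proposition~\ref{G(r,2r+2)3 spherical}, and $c(\G(1,5)_3)=0$ is Proposition~\ref{G(1,5)3 spherical}. For the remaining small cases $\G(1,5)_4$, $\G(1,6)_4$ and $\G(1,7)_5$ I would argue by a dimension count as in Proposition~\ref{G(r,2r+2)3 spherical}: choose the blown-up lines so that the first $k-1$ of them are pairwise skew coordinate lines $[\langle e_0,e_1\rangle],\dots,[\langle e_{2k-4},e_{2k-3}\rangle]$ and the last one is a general line $[\langle v_1,v_2\rangle]$. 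An element of $SL_{n+1}$ fixing the first $k-1$ lines is block diagonal with $2\times 2$ blocks $M_1,\dots,M_{k-1}$ (together with a lower-right block on the remaining coordinates, if any), and imposing that it also fix $[\langle v_1,v_2\rangle]$ forces $M_i=P_iTP_i^{-1}$ for one and the same $T\in GL_2$, where $P_i$ is the invertible $2\times 2$ matrix formed by the two coordinates of $v_1,v_2$ lying in the $i$-th block. When the $k-1$ skew lines already span $\P^n$ — the case for $\G(1,5)_4$ and $\G(1,7)_5$ — this identifies the relevant reductive group with $\{T\in GL_2 : (\det T)^{k-1}=1\}$, whose identity component is $SL_2$, so a Borel $B_k$ of $G_k^{red}$ has $\dim B_k=2$, and by Lemma~\ref{Bkisenough}
$$c(\G(1,5)_4)\ \ge\ 8-2\ =\ 6\ \ge\ 5,\qquad c(\G(1,7)_5)\ \ge\ 12-2\ =\ 10\ \ge\ 8.$$
For $\G(1,6)_4$ one extra coordinate survives; a short computation shows that $G_4$ has dimension $8$ with unipotent radical of dimension at least $4$, hence $\dim B_4\le\dim G_4^{red}\le 4$ and $c(\G(1,6)_4)\ge 10-4=6$.

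For the three cases in which the complexity must be determined exactly — $\G(1,n)_3$ with $n\ge 6$, $\G(1,7)_4$, and $\G(1,n)_4$ with $n\ge 8$ — I would imitate the proof of Proposition~\ref{G(r,n)2 spherical} step by step. Since $n\ge 2k-1$ one may take the $k$ points to be the pairwise skew coordinate lines $p_i=[\langle e_{2i-2},e_{2i-1}\rangle]$, $i=1,\dots,k$, so that $B_k$ consists of the block diagonal matrices $\operatorname{diag}(M_1,\dots,M_k,A)$ with each $M_i$ upper triangular of size $2$, $A$ upper triangular of size $n+1-2k$, and total determinant $1$; in particular $\dim B_k=3k-1+\binom{n+2-2k}{2}$. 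Take the two $B_k$-invariant hyperplanes $\Gamma_0'=\{x_1=0\}$ and $\Gamma_1'=\{x_3=0\}$ of $\P^n$, the natural $r=1$ analogues of the subspaces $\Gamma_0',\Gamma_1'$ of Proposition~\ref{G(r,n)2 spherical}; then a general line $\Sigma_q$ meets each of them in a single point $w_j$, one has $\Sigma_q=\langle w_0,w_1\rangle$, and $b\in B_k$ stabilises $\Sigma_q$ if and only if it fixes $w_0$ and $w_1$ projectively. Writing this out block by block produces the same kind of linear system as in \emph{loc.\ cit.}: the equations from the last coordinate force all scalars to equal $1$, this forces $M_1=\dots=M_k=\operatorname{Id}$ and the leading $(n+1-2k)$-square part of $A$ to be the identity, and one is left with an explicit system in the remaining entries of $A$ whose solution space, computed exactly as in Proposition~\ref{G(r,n)2 spherical}, has dimension $\binom{n-2k}{2}$ (with $\binom{m}{2}$ read as $0$ for $m\le 1$, so that the stabiliser is finite when $n=2k-1$). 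Consequently
$$c(\G(1,n)_k)\ =\ (2n-2)-\left(3k-1+\binom{n+2-2k}{2}\right)+\binom{n-2k}{2},$$
which is $1$ for $k=3,\ n\ge 6$, is $1$ for $(k,n)=(4,7)$, and is $2$ for $k=4,\ n\ge 8$; combined with Lemma~\ref{Bkisenough}, which guarantees the complexity of $\G(1,n)_k$ is realised by $G_k^{red}$, this yields the stated values.

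The conceptual backbone of all this is Proposition~\ref{automorphism prop} together with Lemma~\ref{Bkisenough}, which reduce every computation to the concrete Borel $B_k$ of $G_k^{red}$, plus the observation (used in the small cases) that prescribing several pairwise skew spanning coordinate lines together with one further general line pins each $2\times 2$ block to a common $GL_2$-conjugacy class, cutting the Borel to dimension $2$. The genuinely laborious ingredient is the block-by-block solution of the stabiliser system in the middle paragraph; it is structurally identical to the computation already carried out in Proposition~\ref{G(r,n)2 spherical}, with the size-$(n+1-2k)$ triangular block $A$ playing the role of the block $A_3$ there, and I expect the bookkeeping of that solution space — arriving at dimension $\binom{n-2k}{2}$ — to be the main obstacle to a fully detailed write-up, rather than any new conceptual point.
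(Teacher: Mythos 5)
Your proposal is correct and follows essentially the same strategy as the paper: the small cases $\G(1,3)_3$, $\G(1,4)_3$, $\G(1,5)_3$ are quoted from Propositions \ref{G(r,2r+2)3 spherical} and \ref{G(1,5)3 spherical}, and the exact values for $\G(1,n)_3$ ($n\geq 6$), $\G(1,7)_4$ and $\G(1,n)_4$ ($n\geq 8$) are obtained, as in the paper, by redoing the stabilizer computation of Proposition \ref{G(r,n)2 spherical} for the Borel $B_k$ of $G_k^{red}$ (your stabilizer dimension $\binom{n-2k}{2}$ and the resulting formula check out, and Lemma \ref{Bkisenough} justifies that this computes the complexity). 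The only deviation is in the cases $\G(1,5)_4$, $\G(1,6)_4$, $\G(1,7)_5$, where you compute the actual groups $G_k$ (respectively with identity component $SL_2$, $SL_2$, and an $8$-dimensional solvable group) instead of the paper's auxiliary overgroups $B_4'$, $B_4''$, $B_5'$; this yields slightly sharper lower bounds ($6$, $6$, $10$ versus the paper's $5$, $6$, $8$), which of course still imply the stated inequalities.
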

\begin{proof}
The cases $\G(1,3)_3$ and $\G(1,4)_3$ follow from Proposition \ref{G(r,2r+2)3 spherical}. The case $\G(1,5)_3$ is in Proposition \ref{G(1,5)3 spherical}. Arguing as in the proof of Proposition \ref{G(r,n)2 spherical} we get that $c(\G(1,n)_3)=1$ for $n\geq 6$.

In the cases $\G(1,5)_4$ and $\G(1,6)_4$, proceeding as in the proof of Proposition \ref{G(r,2r+2)3 spherical}, we consider the Borel subgroups 
\begin{footnotesize}
$$B_4'=\left\{
\begin{pmatrix}
 a  &  b &  0 	&  0   &  0   &  0      \\
 0		&a+b&  0		&  0   &  0   &  0\\
 0 	&  0 	 &c&a+b-c&  0   &  0   \\
 0	&  0	 &  0		&a+b&  0      &  0\\
 0 	&  0 	 &	0   	&  0 	 &d&a+b-d \\
 0 	&  0 	 &	0		&  0	 &  0	&a+b
\end{pmatrix}\right\} , 
B_4''=\left\{
\begin{pmatrix}
 a  &  b &  0 	&  0   &  0   &  0  &0    \\
 0		&a+b&  0		&  0   &  0   &  0&0\\
 0 	&  0 	 &c&a+b-c&  0   &  0 &0  \\
 0	&  0	 &  0		&a+b&  0      &  0&0\\
 0 	&  0 	 &	0   	&  0 	 &d&a+b-d &0\\
 0 	&  0 	 &	0		&  0	 &  0	&a+b&0\\
 0 	&  0 	 &	0		&  0	 &  0	&0&e\\
\end{pmatrix}\right\}.
$$
\end{footnotesize}
Then to get the result in the cases $(r,n) = (1,5)$ and $(r,n) = (1,6)$ it is enough to observe that $\dim(B_4')=3$ for $\G(1,5)$ and $\dim(B_4'')=4$ for $\G(1,6)$. Next, note that if $(r,n)=(1,7)$ then $B_3=B_4$. Now, arguing as in the proof of Proposition \ref{G(r,n)2 spherical} we can show that $c(\G(1,n)_4)=2$ if $n\geq 8$. Finally, proceeding again as in the proof of Proposition \ref{G(r,2r+2)3 spherical}, we obtain $\dim(B_5')=4$ and $c(\G(1,7)_5)\geq 12-4=8$.
\end{proof}

We summarize the results of this subsection in the corollary below.

\begin{Corollary}\label{corollary k>=3 c=1}
For the complexity of
$\G(r,n)_k, k\geq 3$ we have
$$c(\G(r,n)_3)\begin{cases}
=0 & \mbox{if } (r,n)=(1,5)\\
=1 & \mbox{if } (r,n)=(2,8) \mbox{ or } r=1 \mbox{ and } n\geq 6  \\
>1 & \mbox{otherwise } 
\end{cases},$$
$$
c(\G(r,n)_4)\begin{cases}
=1 & \mbox{if } (r,n)=(1,7)  \\
>1 & \mbox{otherwise } 
\end{cases},$$
and  $c(\G(r,n)_k)>1$ for $k\geq 5$.
\end{Corollary}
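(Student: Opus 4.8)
The plan is to prove Corollary \ref{corollary k>=3 c=1} as a straightforward bookkeeping exercise, simply collecting the complexity estimates established in Propositions \ref{G(r,2r+2)3 spherical}, \ref{G(r,n)3 n small} and \ref{G(1,n)3 spherical}, together with the monotonicity of complexity under blow-up from Lemma \ref{Bkisenough}. No new geometry is needed; the content is in organizing the case analysis over the pair $(r,n)$ and the number of blown-up points $k$.

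First I would dispose of $k=3$. By Corollary \ref{corollary k=2 c=1} we have $c(\G(r,n)_2)>1$ whenever $r\geq 3$ and $n>2r+2$, while the cases $n\in\{2r+1,2r+2\}$ with $r\geq 3$ are handled by the first part of Proposition \ref{G(r,2r+2)3 spherical}, which gives $c(\G(r,2r+1)_3)\geq r+1>1$ and $c(\G(r,2r+2)_3)\geq 2r+1>1$. Since $c(\G(r,n)_3)\geq c(\G(r,n)_2)$ by Lemma \ref{Bkisenough}, this forces $r\in\{1,2\}$ whenever $c(\G(r,n)_3)\leq 1$. For $r=2$ the complete list of small cases $n\in\{5,6,7\}$ and the uniform answer $c(\G(2,n)_3)=1$ for $n=8$, $>1$ for $n>8$, are exactly the content of Proposition \ref{G(r,n)3 n small}. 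For $r=1$, Proposition \ref{G(1,n)3 spherical} gives $c(\G(1,n)_3)\geq 2$ for $n\in\{3,4\}$, $c(\G(1,5)_3)=0$, and $c(\G(1,n)_3)=1$ for $n\geq 6$. Assembling these pieces yields the stated formula for $c(\G(r,n)_3)$: it equals $0$ exactly at $(r,n)=(1,5)$, equals $1$ exactly at $(r,n)=(2,8)$ and at $r=1,n\geq 6$, and exceeds $1$ otherwise.

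Next I would do $k=4$. By the monotonicity $c(\G(r,n)_4)\geq c(\G(r,n)_3)$, a necessary condition for $c(\G(r,n)_4)\leq 1$ is $c(\G(r,n)_3)\leq 1$, which by the previous paragraph restricts to $(r,n)\in\{(1,5),(1,6),(1,7),\dots\}$ with $r=1$, or $(r,n)=(2,8)$. Proposition \ref{G(r,n)3 n small} supplies $c(\G(2,8)_4)>1$, eliminating the $r=2$ possibility. Proposition \ref{G(1,n)3 spherical} then gives $c(\G(1,5)_4)\geq 5$, $c(\G(1,6)_4)\geq 6$, $c(\G(1,7)_4)=1$, and $c(\G(1,n)_4)=2$ for $n\geq 8$; hence $c(\G(r,n)_4)=1$ precisely when $(r,n)=(1,7)$, and $>1$ otherwise. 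Finally, for $k\geq 5$: again by monotonicity it suffices to rule out the surviving candidates from the $k=4$ analysis, namely $(r,n)=(1,7)$, and Proposition \ref{G(1,n)3 spherical} gives $c(\G(1,7)_5)\geq 8>1$; all other pairs already have $c(\G(r,n)_4)>1$, so $c(\G(r,n)_k)\geq c(\G(r,n)_4)>1$ for all $k\geq 5$.

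The only mild subtlety, and the place where one must be careful rather than encountering a genuine obstacle, is making sure the monotonicity step is invoked with the right indices and that the "otherwise" clauses are genuinely exhaustive — i.e. that every pair $(r,n)$ not explicitly listed has been shown to have complexity $>1$ at the relevant $k$, either directly or by monotonicity from a smaller $k$. Concretely, for $k\geq 5$ one should note $\left\lfloor (n+1)/(r+1)\right\rfloor$ plays no role here since all cases reduce, via Lemma \ref{Bkisenough}, to the finitely many pairs already treated. I would therefore write the proof as: (i) recall $c_k\geq c_{k-1}$; (ii) for $k=3$ combine Corollary \ref{corollary k=2 c=1} with Propositions \ref{G(r,2r+2)3 spherical}, \ref{G(r,n)3 n small}, \ref{G(1,n)3 spherical}; (iii) for $k=4$ intersect with the $k=3$ list and apply the relevant parts of Propositions \ref{G(r,n)3 n small} and \ref{G(1,n)3 spherical}; (iv) for $k\geq 5$ use $c(\G(1,7)_5)\geq 8$ from Proposition \ref{G(1,n)3 spherical} together with monotonicity. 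This is a one-paragraph proof in the final write-up.
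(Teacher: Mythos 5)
Your proposal is correct and coincides with the paper's own treatment: the paper gives no separate argument for Corollary \ref{corollary k>=3 c=1}, presenting it as a summary of Propositions \ref{G(r,2r+2)3 spherical}, \ref{G(r,n)3 n small} and \ref{G(1,n)3 spherical}, Corollary \ref{corollary k=2 c=1}, and the monotonicity $c_k\geq c_{k-1}$ from Lemma \ref{Bkisenough}, which is exactly the bookkeeping you carry out. Your explicit case check (in particular handling $r\geq 3$ with $n\in\{2r+1,2r+2\}$ via Proposition \ref{G(r,2r+2)3 spherical} and $n>2r+2$ via Corollary \ref{corollary k=2 c=1} plus monotonicity) is complete and accurate.
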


Now we are ready to prove Theorem \ref{main3}.

\begin{proof}[{Proof of Theorem \ref{main3}}]
The statement for projective spaces $r = 0$ follows from \cite[Theorem 1.3]{CT06}. By Theorem \ref{main1} we have that $\G(1,4)_4$ is weak Fano, and hence \cite[Corollary 1.3.2]{BCHM10} yields that it is a Mori dream space.

For the other cases, observe that by Proposition \ref{G(r,n)1 spherical},  Corollary \ref{corollary k=2 c=1}, and Corollary \ref{corollary k>=3 c=1} all the $\G(r,n)_k$ listed in the statement have complexity at most one, and therefore by Remark \ref{rem1} they are Mori dream spaces.   
\end{proof}

\section{Osculating spaces and stable base locus decomposition of \texorpdfstring{$\Eff(\G(r,n)_1)$}{TEXT}}\label{mcdsbl}
Let $X$ be a smooth $n$-dimensional projective variety, and let $D$ be an effective $\mathbb{Q}$-divisor on $X$. The stable base locus $\textbf{B}(D)$ of $D$ is the set-theoretic intersection of the base loci of the complete linear systems $|sD|$ for all positive integers $s$ such that $sD$ is integral
$$\textbf{B}(D) = \bigcap_{s > 0}B(sD).$$
Since stable base loci do not behave well with respect to numerical equivalence, see for instance \cite[Example 10.3.3]{La04II}, we will assume that $h^{1}(X,\mathcal{O}_X)=0$ so that linear and numerical equivalence of $\mathbb{Q}$-divisors coincide.   

Then numerically equivalent $\mathbb{Q}$-divisors on $X$ have the same stable base locus, and the pseudo-effective cone $\overline{\Eff}(X)$ of $X$ can be decomposed into chambers depending on the stable
base locus of the corresponding linear series called \textit{stable base locus decomposition}, see \cite[Section 4.1.3]{CdFG17} for further details. 

Anyway, we will deal only with the stable base locus decomposition of $X = \G(r,n)_1$, the blow-up of $\G(r,n)$ at one point, for which $h^{1}(\G(r,n)_1,\mathcal{O}_{\G(r,n)_1})=0$ indeed holds. 

If $X$ is a Mori dream space, satisfying then the condition $h^1(X,\mathcal{O}_X)=0$, determining the stable base locus decomposition of $\Eff(X)$ is a first step in order to compute its Mori chamber decomposition. 

\begin{Remark}\label{SBLMC}
The Mori chamber decomposition is a refinement of the stable base locus decomposition. Indeed, since given an effective divisor $D$ the indeterminacy locus of the map $\phi_{D}:X\dasharrow X_D$ induced by $D$ is the stable base locus of $D$, if $\textbf{B}(D_1)\neq \textbf{B}(D_2)$ then the varieties $X_{D_1}$ and $X_{D_2}$ are not isomorphic. 
\end{Remark}

In this section we determine the stable base locus decomposition of $\Eff(\G(r,n)_1)$, and we show and how it is determined by the osculating spaces of $\G(r,n)$ at the blown-up point $p\in \G(r,n)$. We would like to mention that the connection between the behavior of the tangent and osculating spaces of $\G(r,n)$, and more generally of rational homogeneous varieties, and the birational geometry of these varieties has been pointed out in \cite{MM13}, \cite{MR16}, \cite{AMR16}, \cite{Ma16}. 

\begin{Definition}\label{oscdef}
Let $X\subset \P^N$ be a projective variety of dimension $n$, and $p\in X$ a smooth point.
Choose a local parametrization of $X$ at $p$:
$$
\begin{array}{cccc}
\phi: &\mathcal{U}\subset\mathbb{C}^n& \longrightarrow & \mathbb{C}^{N}\\
      & (t_1,\dots,t_n) & \longmapsto & \phi(t_1,\dots,t_n) \\
      & 0 & \longmapsto & p 
\end{array}
$$
For a multi-index $I = (i_1,\dots,i_n)$, set $\phi_I = \frac{\partial^{|I|}\phi}{\partial t_1^{i_1}\dots\partial t_n^{i_n}}$. For any $m\geq 0$, let $O^m_pX$ be the affine subspace of $\mathbb{C}^{N}$ centered at $p$ and spanned by the vectors $\phi_I(0)$ with  $|I|\leq m$. The $m$-\textit{osculating space} $T_p^m X$ of $X$ at $p$ is the projective closure  of  $O^m_pX$ in $\mathbb{P}^N$. Note that $T_p^0 X=\{p\}$, and $T_p^1 X$ is the usual tangent space of $X$ at $p$. When no confusion arises we will write $T_p^m$ instead of $T_p^m X$ for the $m$-osculating space of $X$ at $p$. 

The \textit{$m$-osculating dimension} of  $X$ at $p$ is $\dim(T_p^m X) = \binom{n+m}{n}-1-\delta_{m,p}$ where $\delta_{m,p}$ is the number of independent differential equations of order at most $m$ satisfied by $X$ at $p$. 
\end{Definition}

\begin{Remark}[Osculating projections]\label{oscproj}
The linear projection 
$$\Pi_{T^m_p}:X\subset\mathbb{P}^N\dasharrow X_m\subset \mathbb{P}^{N-\dim(T^m_p)-1}$$
with center $T^m_pX$ is induced by the linear system of hyperplanes of $\mathbb{P}^N$ containing $T^m_pX$.  The corresponding linear system on the blow-up $\G(r,n)_1$ of $\G(r,n)$ at $p$ is the linear system of the divisor $H-(m+1)E$. By \cite[Proposition 3.2]{MR16} we have that the rational map $\Pi_{T_{p}^m}$ is birational for every $0\leq m\leq r-1$, while $\Pi_{T_{p}^r}:\G(r,n)\dasharrow \G(r,n-r-1)$ is a fibration with fibers isomorphic to $\G(r,2r+1)$.
\end{Remark}

The next step, in order to describe the stable base locus decomposition of $\Eff(\G(r,n)_1)$, consists in understanding the stable base locus of the divisors $H-(m+1)E$.

\begin{Notation}\label{notation1}
Let $p\in \G(r,n)$ be a point and for any non negative integer $m\leq r+1$ consider the $m$-osculating space $T^m_p \G(r,n)\subset \P^N$ of $\G(r,n)$ at $p$. We denote by $R_m=R_m(p)$ the subvariety of the Grassmannian defined by $R_m:=\G(r,n)\cap T^m_p\G(r,n)$. In particular $R_0=\{p\}$.
\end{Notation}

The locus $R_m$ can be characterized in two more ways.

\begin{Lemma}\label{lemmaBS}
Choose a complete flag $\{0\}=V_0\subset V_1\subset \cdots \subset V_{n+1}=\C^{n+1}$ of linear spaces in $\C^{n+1}$,
with $V_{r+1}$ corresponding to the point $p\in \G(r,n).$ Define the following Schubert varieties in $\G(r,n)$ 
$$R_m'(p)=R_m':=\{[U]\in \G(r,n); \dim (U \cap V_{r+1})\geq r+1-m \}$$
for $m=0,1,\dots, r+1$. Moreover, for any $1\leq m\leq r+1$ define
$$R_m''(p)=R_m'':=\!\!\!\!\!\!
\bigcup_{\substack{C\mbox{ \tiny{rational normal curve} }\\ p\in C\subset \G(r,n) \\ \deg(C)\leq m}}\!\!\! \!\!\! C$$
as the locus swept out by degree $m$ rational curves contained in $\G(r,n)$ and passing through $p$. Then $R_m=R_m'=R_m''$ for any $1\leq m\leq r+1$.
\end{Lemma}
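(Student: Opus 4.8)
The statement asserts the equality of three subvarieties of $\G(r,n)$: the "osculating intersection" $R_m = \G(r,n)\cap T^m_p\G(r,n)$, the Schubert variety $R_m'$, and the "rational-curve locus" $R_m''$. I would prove it by establishing the chain of inclusions $R_m'' \subseteq R_m' \subseteq R_m \subseteq R_m''$, which closes up into equality. The bulk of the work is essentially a local coordinate computation in the big affine cell of $\G(r,n)$ around $p$, so the plan is to fix coordinates once and for all: take the flag $\{0\}=V_0\subset\cdots\subset V_{n+1}=\C^{n+1}$ with $V_{r+1}$ corresponding to $p$, pick a basis $e_0,\dots,e_n$ adapted to it, and describe points $[U]\in\G(r,n)$ near $p$ by $(r+1)\times(n-r)$ matrices $M=(m_{ij})$, where $U$ is the row span of $(\,\mathrm{Id}_{r+1}\mid M\,)$ and $p$ corresponds to $M=0$.

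\textbf{Step 1: $R_m'\subseteq R_m$, i.e. the Schubert variety lies in the osculating space.} In the affine chart above, the condition $\dim(U\cap V_{r+1})\geq r+1-m$ defining $R_m'$ translates into a rank condition on $M$: the matrix $M$ has rank at most $m$. I would compute the osculating spaces $T^k_p\G(r,n)$ explicitly in Plücker coordinates. Writing the Plücker embedding in terms of the chart variables, the coordinate $p_J$ (for $J$ differing from the "reference" index set in exactly $d$ slots) is, up to lower-order terms, a degree-$d$ monomial/minor in the entries of $M$; hence the $k$-osculating space $T^k_p$ is cut out by the vanishing of all Plücker coordinates whose "distance" from $p$ exceeds $k$. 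Therefore a point of $\G(r,n)$ lies in $T^m_p$ precisely when all its size-$(m{+}1)$ minors of $M$ vanish, which is exactly the rank-$\le m$ condition. This simultaneously gives $R_m'\subseteq R_m$ \emph{and} the reverse inclusion $R_m\subseteq R_m'$, since both are literally the determinantal locus $\{\operatorname{rank}M\le m\}$ (one must check that points of $\G(r,n)\cap T^m_p$ outside the chart also satisfy the Schubert condition, but by $B$-equivariance or a direct closure argument this is automatic, as $R_m'$ is closed and contains the dense chart part). I expect the identification of $T^k_p\G(r,n)$ with a coordinate subspace to be the technical heart of this step; it is the place where one genuinely uses the structure of the Plücker embedding, and it is exactly the content that makes the Lemma "of independent interest."

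\textbf{Step 2: $R_m''\subseteq R_m'$ and $R_m'\subseteq R_m''$, matching the Schubert variety with the rational-curve sweep.} For $R_m''\subseteq R_m'$: a degree-$d$ rational normal curve $C\subset\G(r,n)$ through $p$ with $d\le m$ can be written in the chart as $t\mapsto M(t)$ where each entry $m_{ij}(t)$ is a polynomial of degree $\le d\le m$ and $\operatorname{rank}M(t)$ is correspondingly bounded; more structurally, such a curve corresponds to a pencil-type family of subspaces and one checks $\dim(U(t)\cap V_{r+1})\ge r+1-m$ for all $t$, so $C\subseteq R_m'$. For the reverse $R_m'\subseteq R_m''$: given $[U]$ with $\operatorname{rank}M=s\le m$, I would exhibit an explicit degree-$s$ rational normal curve in $\G(r,n)$ joining $p$ to $[U]$ — concretely, diagonalize $M$ via the $GL_{r+1}\times GL_{n-r}$ action to the normal form $M=\operatorname{diag}(1,\dots,1,0,\dots,0)$ (size $s$ block), and then the curve $t\mapsto \operatorname{diag}(t,\dots,t,0,\dots,0)$ sweeps a line-by-line... more carefully, one builds a rational normal curve of degree $s$ whose span in the appropriate sub-Grassmannian $\G(r,2r+1)$-type factor realizes the standard rational normal curve; the general point of $R_m'$ is then covered, and one passes to the closure. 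The dimension bookkeeping — that these curves genuinely sweep out all of $R_m'$ and nothing more — should be checked against $\dim R_m' = \dim R_m$ computed from the determinantal description.

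\textbf{Main obstacle.} The principal difficulty is Step 1, namely pinning down $T^m_p\G(r,n)$ as a coordinate subspace in Plücker space and, equivalently, the assertion that the only differential equations of order $\le m$ satisfied by $\G(r,n)$ at $p$ are the "obvious" ones forcing high-distance Plücker coordinates to vanish to order $>m$. This requires either a clean formula for the $m$-osculating dimension of the Grassmannian (which can be extracted from its structure as a minuscule/homogeneous variety) or a hands-on computation of the partial derivatives $\phi_I(0)$ of the Plücker parametrization. Once $R_m = R_m' = \{\operatorname{rank}M\le m\}$ is established, the rational-curve identification in Step 2 is comparatively routine, relying on the $GL_{r+1}\times GL_{n-r}$-normal form for matrices and the standard description of rational normal curves in sub-Grassmannians.
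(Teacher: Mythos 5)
Your outline is correct, but it takes a genuinely different route from the paper. For the identification $R_m=R_m'$ the paper does not recompute osculating spaces at all: it simply quotes the explicit description of $T^m_p\G(r,n)$ from \cite[Proposition 2.3]{MR16}, so your Step 1 amounts to re-proving that cited result via the Pl\"ucker parametrization (which is viable: the distance-$d$ Pl\"ucker coordinate is a $d\times d$ minor of the chart matrix, so only coordinates of distance $\le m$ survive differentiation of order $\le m$ at $M=0$). For $R_m'=R_m''$ the paper argues geometrically rather than by normal forms: it first shows that any two points $p,q$ with $\dim(V_p\cap V_q)=d'$ lie on a rational normal curve of degree $r-d'$ constructed from a rational normal scroll in the dual picture, which after restricting to a sub-Grassmannian $\G(d-1,r+d)$ gives $R_m'\subseteq R_m''$; and for $R_m''\subseteq R_m'$ it uses that the union of the $r$-planes parametrized by a degree-$d$ curve is a scroll, hence a variety of minimal degree spanning only a $\P^{r+d}$, forcing $\dim(V_p\cap V_q)\ge r-d$. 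Your determinantal/normal-form approach is more elementary and self-contained (it subsumes the paper's separate step $R_{r+1}''=\G(r,n)$), at the cost of carrying out the osculating-space computation that the paper outsources.

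Two points in your sketch need repair, though both are fixable. First, in $R_m''\subseteq R_m'$ the claim that the chart entries $m_{ij}(t)$ are polynomials of degree $\le d$ is not literally correct: the curve must leave the affine chart, and the entries are ratios of binary forms which can acquire poles at finite parameters. The correct version of your idea is that each distance-$s$ Pl\"ucker coordinate restricts on the curve to a binary form of degree $d$ vanishing to order $\ge s$ at the parameter of $p$, hence vanishes identically when $s>d$; this yields the rank bound everywhere on the curve (alternatively, use the paper's minimal-degree argument). Second, in $R_m'\subseteq R_m''$ you cannot conclude by ``passing to the closure'': $R_m''$ is not obviously closed, and points of $R_m'$ outside the chart are not reached by your diagonalization. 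Instead, run the construction intrinsically: for any $q\in R_m'$ set $D=U_q\cap V_p$ with $\dim D=r+1-s$, $s\le m$, write $V_p=D\oplus A$, $U_q=D\oplus B$ with bases $a_1,\dots,a_s$ and $b_1,\dots,b_s$, and take $U(t)=D\oplus\left\langle a_1+tb_1,\dots,a_s+tb_s\right\rangle$; since $D+A+B$ is a direct sum, the coefficient wedges are linearly independent and this is a degree-$s$ rational normal curve in $\G(r,n)$ through $p$ and $q$. With these adjustments, together with the standard fact that the Schubert condition $\dim(U\cap V_{r+1})\ge r+1-m$ is equivalent on all of $\G(r,n)$ (not only on the chart) to the vanishing of all Pl\"ucker coordinates $p_J$ with $\#\left(J\cap\{0,\dots,r\}\right)<r+1-m$ — or with your equivariance remark, which is sounder than the density claim — your proof is complete.
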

\begin{proof}
First we prove that $R_{r+1}''=\G(r,n).$ Let $p,q$ be two points in $\G(r,n)$ corresponding to linear $r$-spaces $V_p$ and $V_q$ in $\P^n$ such that $\dim(V_p\cap V_q) = d$ with $-1\leq d\leq r-1$. Let $L = \left\langle V_p,V_q\right\rangle\cong\mathbb{P}^{2r-d}$. Then $p,q\in \mathbb{G}(r,L)\subset\mathbb{G}(r,n)$. Now, consider $p^{*},q^{*}\in \mathbb{G}(r-d-1,L^{\vee})\cong \mathbb{G}(r-d-1,2r-d)$ corresponding to the linear $(r-d-1)$-spaces $V_p^{*},V_q^{*}\subset L^{\vee}$. Take lines $L_0,\dots,L_{r-d-1}$ in $L^{\vee}$ that intersect $V_p^{*}$ and $V_q^{*}$, and such that they generate a $(2r-2d-1)$-plane. Let $a=L_0\cap V_p^{*}$ and $b=L_0\cap V_q^{*}$. Choose isomorphisms $\phi_i:L_0\to L_i$ for $i=1,\dots,r-d-1$ such that $\phi_i(a)=L_i \cap V_p^{*}$ and $\phi_i(b)=L_i \cap V_q^{*}$, and consider the degree $r-d$ rational normal scroll 
$$S=\bigcup_{x\in L_0}\left\langle x,
\phi_1(x),\dots,\phi_{r-d-1}(x)\right\rangle.$$
Then $C^{*}=\{[V]\in \G(r-d-1,L^{\vee});V\subset S\}$ is a rational normal curve in $\G(r-d-1,L^{\vee})$ of degree $r-d=\deg(S)$ passing through $p^{*},q^{*}\in \G(r-d-1,L^{\vee})$. Hence, dually we get a rational normal curve $C\subset \G(r,L)\subset\G(r,n)$ of degree $\deg(C) = r-d\leq r+1$ passing through $p,q\in \G(r,n)$.

Next we prove that $R_m'=R_m''$ for $1\leq m\leq r+1$. Let $q\in R_m'=R_m'(p)$. Then by the definition of $R_m'$ we have $\dim(V_p\cap V_q)=r-d$ with $d\leq m$. Therefore, there is a subspace $L\cong \P^{r+d}$ containing both $V_p$ and $V_q$. Thus, there exists a Grassmannian $\G(r,r+d)\cong \G(d-1,r+d)$ in $\G(r,n)$ such that
$p,q\in \G(d-1,r+d)$. Then, by the first part of the proof there is a rational normal curve in $\G(d-1,r+d)\subset \G(r,n)$ of degree $d\leq m$ passing through $p$ and $q$, that is $q\in R_m''$. 

Conversely, if $q\in R_m''$ then there is a rational normal curve $C$ of degree $d\leq m$
passing through $p$ and $q$. Set $X=\bigcup_{[V]\in C} V\subset \P^n$. Then $X$ is a rational normal scroll of degree $d$ and dimension $r+1$. Let $L=\P^s$ be the span of $X$ in $\P^n$. Therefore $X$ is a non degenerate subvariety of $L$ of minimal degree, that is $\deg(X)=\dim(L)-\dim(X)+1$. For details 
about rational normal scrolls and varieties of minimal degree see \cite[Chapter 19]{Ha95}.
This means that $V_p$ and $V_q$ are contained in $L\cong \P^{r+d}$. Hence $\dim(V_p\cap V_q)\geq r-d\geq r-m$ and $q\in R_m'$. 

Now, it is enough to show that $R_m=R_m'$. This follows from the description of $T^m_p\G(r,n)$ in \cite[Proposition 2.3]{MR16}.
\end{proof}

\begin{Corollary}\label{dimBS}
We have that $R_m$ is irreducible of dimension $dim(R_m)=m(n+1-m)$ for $m=0,\dots, r+1$. In particular, $R_m$ is a divisor of $\G(r,n)$ if and only if $m=r$ and $n=2r+1$.
\end{Corollary}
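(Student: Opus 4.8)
The plan is to use the identification $R_m = R_m'$ from Lemma \ref{lemmaBS} and compute the dimension of the Schubert variety $R_m' = \{[U]\in\G(r,n) : \dim(U\cap V_{r+1}) \geq r+1-m\}$ directly. First I would observe that this Schubert variety is irreducible: it is the closure of the locus where $\dim(U\cap V_{r+1})$ equals exactly $r+1-m$, which is a single Schubert cell, hence irreducible, and taking closure preserves irreducibility. So $R_m$ is irreducible.

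For the dimension, I would parametrize an $r$-plane $U$ meeting $V_{r+1}$ in at least dimension $r+1-m$, i.e. $U$ contains a subspace $W\subseteq V_{r+1}$ with $\dim W = r+1-m$ (generically exactly this). Choosing $W$ costs $\dim\Grass(r+1-m, r+1) = (r+1-m)\cdot m$ parameters. Having fixed $W$, choosing $U\supseteq W$ inside $\C^{n+1}$ with $\dim U = r+1$ amounts to choosing an $m$-dimensional subspace of $\C^{n+1}/W$, which has dimension $n+1-(r+1-m) = n-r+m$; this costs $m\cdot(n-r+m-m) = m(n-r)$ parameters. Adding these, $\dim R_m = (r+1-m)m + m(n-r) = m(n+1-m)$. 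I would double-check the boundary cases $m=0$ (giving $\dim R_0 = 0$, consistent with $R_0=\{p\}$) and $m=r+1$ (giving $\dim R_{r+1} = (r+1)(n-r) = \dim\G(r,n)$, consistent with $R_{r+1}=\G(r,n)$ as shown in Lemma \ref{lemmaBS}). Alternatively, one can read off $\dim R_m$ directly from the standard formula for the dimension of a Schubert variety in terms of its defining partition, which would serve as an independent sanity check.

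Finally, for the last assertion I would compare $\dim R_m = m(n+1-m)$ with $\dim\G(r,n) = (r+1)(n-r)$ and ask when the difference is $1$, i.e. when $R_m$ is a divisor. Since $R_m$ is a proper subvariety for $m\leq r$, we need $(r+1)(n-r) - m(n+1-m) = 1$. Viewing the left side as a function of $m$ on $\{0,\dots,r+1\}$, it is minimized at the endpoint $m=r$ among $m\leq r$ (the relevant range), giving $(r+1)(n-r) - r(n+1-r) = n - 2r - 1$; requiring this to equal $1$ forces... actually requiring it to equal $0$ when $m=r$ would give $n=2r+1$. I would argue more carefully: the codimension $(r+1)(n-r) - m(n+1-m)$ is a downward-pointing function of $m$, so on $m\in\{0,\dots,r\}$ its minimum is attained at $m=r$, where it equals $n-2r-1\geq 0$ (since $n\geq 2r+1$). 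Thus the codimension is $1$ only if it is $1$ at $m=r$, i.e. $n-2r-1=1$, which gives $n=2r+2$... let me recompute: $n-2r-1 = 1 \iff n = 2r+2$. Hmm, but the statement says $m=r$ and $n=2r+1$. I should recheck: at $m=r$, $\dim R_r = r(n+1-r)$ and $\dim\G(r,n) = (r+1)(n-r) = rn - r^2 + n - r$, while $r(n+1-r) = rn + r - r^2$, so the difference is $(rn - r^2 + n - r) - (rn + r - r^2) = n - 2r$. Setting $n-2r = 0$ gives $n=2r+1$. Good — so the codimension at $m=r$ is $n-2r$, it equals $1$ exactly when $n=2r+1$, and this is the only case a proper $R_m$ can be a divisor; this completes the proof. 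The main obstacle will be the elementary but slightly fiddly optimization showing $m=r$ is the unique candidate — carefully verifying the codimension function is decreasing in $m$ over the relevant range.
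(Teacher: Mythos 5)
Your proposal is correct and follows essentially the same route as the paper: identify $R_m$ with the Schubert variety $R_m'$ via Lemma \ref{lemmaBS} and use its dimension, the paper simply citing \cite[Example 11.42]{Ha95} for the formula $m(n+1-m)$ where you re-derive it by the standard incidence count over choices of $W\subset V_{r+1}$, and then both arguments reduce the divisor statement to the codimension computation $(r+1)(n-r)-m(n+1-m)$, minimized at $m=r$ with value $n-2r$ (your corrected value is the right one, since $n\geq 2r+1$ this is $1$ exactly when $n=2r+1$). The only cosmetic point is that the equality locus $\{\dim(U\cap V_{r+1})=r+1-m\}$ is an orbit of the parabolic stabilizing $V_{r+1}$ rather than literally a Schubert cell, but its irreducibility (and hence that of $R_m$) follows either from that or from your fibered parametrization, so the argument stands.
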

\begin{proof}
The statement follows from the description of $R_m$ as a Schubert variety in Lemma \ref{lemmaBS} and \cite[Example 11.42]{Ha95}.
\end{proof}

\begin{thm}\label{thmSBLD}
The movable cone of $\G(r,n)_1$ is given by
$$\Mov(\G(r,n)_1)=\begin{cases}
\left\langle H,H-rE\right\rangle &\mbox{ if } n=2r+1\\
\left\langle H,H-(r+1)E\right\rangle &\mbox{ if } n>2r+1
\end{cases}$$
and the divisors $E,H,H-E,\dots,H-(r+1)E$ give the walls of the stable base locus decomposition of $\Eff(\G(r,n)_1)$.
\end{thm}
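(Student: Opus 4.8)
The plan is to compute the stable base locus $\textbf{B}(D)$ of every $\mathbb{Q}$-divisor class $D$ in $\Eff(\G(r,n)_1)=\left\langle E,H-(r+1)E\right\rangle$ (Proposition \ref{G(r,n)1 spherical}) and then to read off both the wall structure and the movable cone. For the nef cone, Lemma \ref{conecurvesgeneral} with $k=1$ gives $\NE(\G(r,n)_1)=\left\langle e,h-e\right\rangle$, so dualizing through $H\cdot h=1$, $H\cdot e=0$, $E\cdot h=0$, $E\cdot e=-1$ yields $\Nef(\G(r,n)_1)=\left\langle H,H-E\right\rangle$; since $\G(r,n)_1$ is a Mori dream space (Proposition \ref{G(r,n)1 spherical} and Remark \ref{rem1}) every nef class is semiample, hence $\textbf{B}(D)=\varnothing$ for all $D$ in this cone. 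Next, for a class $D\equiv aH-bE$ with $b<0$, pushing forward identifies $H^0(\G(r,n)_1,sD)$ with $H^0(\G(r,n),\mathcal O(sa))$, so every member of $|sD|$ contains $E$ with multiplicity $s|b|$ while the residual part is base‑point free for $a\ge 1$; thus $\textbf{B}(D)=E$ on the whole region $\mathcal{C}_{-1}=[E,H)$, the ray of $E$ included.

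The heart of the proof is the identity $\textbf{B}(H-jE)=\wave{R}_{j-1}$ for $1\le j\le r+1$, where $\wave{R}_{j-1}$ is the strict transform of $R_{j-1}$ and $\wave{R}_0=\varnothing$. For the inclusion $\subseteq$: by Notation \ref{notation1} and Remark \ref{oscproj} the system $\pi_*|H-jE|$ consists of the hyperplanes of $\P^N$ containing the osculating space $T^{j-1}_p\G(r,n)$, and the base locus of this system meets $\G(r,n)$ exactly in $T^{j-1}_p\G(r,n)\cap\G(r,n)=R_{j-1}$, whence $\textbf{B}(H-jE)\subseteq\wave{R}_{j-1}\cup E$. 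To discard $E$, one uses that by Corollary \ref{dimBS} the subvarieties $R_0\subsetneq R_1\subsetneq\cdots\subsetneq R_{r+1}$ are strictly increasing (their dimensions $m(n+1-m)$ strictly increase for $m\le r+1$, since $n\ge 2r+1$), hence so are the osculating spaces $T^m_p\G(r,n)$ by Lemma \ref{lemmaBS}, with $T^{r+1}_p\G(r,n)=\P^N$; so one may pick a hyperplane $\Lambda$ containing $T^{j-1}_p\G(r,n)$ but not $T^{j}_p\G(r,n)$, and then $\Lambda\cap\G(r,n)$ has multiplicity exactly $j$ at $p$, so its strict transform lies in $|H-jE|$ and avoids $E$. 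For the inclusion $\supseteq$: if $\Delta\in|s(H-jE)|$ then $Y:=\pi_*\Delta\subset\P^N$ is a hypersurface of degree $s$ with $\mult_p Y\ge sj$, and for every rational normal curve $C\subset\G(r,n)$ through $p$ of degree $d\le j-1$ we must have $C\subseteq Y$, since otherwise $\mult_p Y\le\mult_p(Y\cap C)\le sd<sj\le\mult_p Y$; by Lemma \ref{lemmaBS} these curves sweep out $R_{j-1}$, so $R_{j-1}\subseteq Y$, i.e.\ $\wave{R}_{j-1}\subseteq\Delta$, giving $\wave{R}_{j-1}\subseteq\textbf{B}(H-jE)$.

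The same intersection estimate, run with curves of degree $\le i$, shows that $\wave{R}_i\subseteq\textbf{B}(D)$ for every effective $D\equiv aH-bE$ with $b/a>i$; combined with the subadditivity of the stable base locus this yields $\textbf{B}(D)\subseteq\textbf{B}(H-iE)\cup\textbf{B}(H-(i+1)E)=\wave{R}_{i-1}\cup\wave{R}_i=\wave{R}_i$ whenever $D$ lies in the cone spanned by $H-iE$ and $H-(i+1)E$, so $\textbf{B}(D)=\wave{R}_i$ on the whole chamber $\mathcal{C}_i=(H-iE,H-(i+1)E]$, for $1\le i\le r$. Hence the stable base locus is constant equal to $E$, $\varnothing$, $\wave{R}_1,\dots,\wave{R}_r$ on $\mathcal{C}_{-1},\mathcal{C}_0,\mathcal{C}_1,\dots,\mathcal{C}_r$ respectively, and since $\varnothing=\wave{R}_0\subsetneq\wave{R}_1\subsetneq\cdots\subsetneq\wave{R}_r$ and none of these equals $E$, the walls of the stable base locus decomposition are exactly the rays spanned by $E,H,H-E,\dots,H-(r+1)E$. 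Finally, by Corollary \ref{dimBS} we have $\codim_{\G(r,n)}R_m\ge 2$ for $m\le r-1$ while $\codim_{\G(r,n)}R_r=n-2r$; together with $\textbf{B}(D)=E$ being a divisor on $\mathcal{C}_{-1}$, this gives $\Mov(\G(r,n)_1)=\mathcal{C}_0\cup\cdots\cup\mathcal{C}_r=\left\langle H,H-(r+1)E\right\rangle$ when $n>2r+1$, and $\Mov(\G(r,n)_1)=\mathcal{C}_0\cup\cdots\cup\mathcal{C}_{r-1}=\left\langle H,H-rE\right\rangle$ when $n=2r+1$.

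The step I expect to be the main obstacle is showing $E\not\subseteq\textbf{B}(H-jE)$, equivalently that the osculating filtration $\{p\}=T^0_p\subsetneq T^1_p\subsetneq\cdots\subsetneq T^{r+1}_p=\P^N$ at $p$ is strictly increasing, so that $\G(r,n)$ carries hyperplane sections of multiplicity exactly $j$ at $p$; this is precisely what the identification of $R_m$ with a Schubert variety in Lemma \ref{lemmaBS} and the dimension count of Corollary \ref{dimBS} are there to supply. The remaining ingredients — the curve–intersection lower bound, which rests on the description of $R_m$ as the locus swept out by the rational normal curves through $p$ in Lemma \ref{lemmaBS}, and the subadditivity of $\textbf{B}$ — are routine.
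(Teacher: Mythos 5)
Your proposal is correct and follows essentially the same route as the paper's proof: lower bounds on the stable base loci coming from the rational normal curves through $p$ supplied by Lemma \ref{lemmaBS} (you phrase this as a multiplicity estimate for the pushed-forward divisor, the paper as negative intersection with the strict transforms $mh-e$), upper bounds from the linear systems of hyperplanes containing the osculating spaces as in Remark \ref{oscproj}, propagation across each chamber by monotonicity/subadditivity of $\textbf{B}$, and the movable cone read off from the codimension count of Corollary \ref{dimBS}. The only caveat is cosmetic: the strict transform of a hyperplane section with multiplicity exactly $j$ at $p$ does not \emph{contain} $E$ (it does meet it), which is what your argument actually needs and is the same level of detail at which the paper itself treats possible base points along $E$.
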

\begin{proof}
In the same notation of Theorem \ref{main2} let $D$ be a $\mathbb{Q}$-divisor in $[E,H)$. Then $\textbf{B}(D)\subset E$. Furthermore, $D\cdot e < 0$ and since the curves as class $e$ cover $E$ we get that $E\subset \textbf{B}(D)$. Therefore, $\textbf{B}(D) = E$ for any $D\in [E,H)$.     

Now, let $D_1 = H + b_1 E$, $D_2 = H + b_2E$ be effective $\mathbb{Q}$-divisors in $\mathbb{G}(r,n)_1$ such that $b_2\leq b_1\leq 0$. Note that we can write
$$D_2 = D_1+(b_2-b_1)E$$
with $b_2-b_1 \leq 0$. Therefore
\stepcounter{thm}
\begin{equation}\label{sbl}
\textbf{B}(D_1)\subset \textbf{B}(D_2).
\end{equation}
Now, let $\widetilde{C} = mh-e$ be the strict transform of a degree $m$ rational normal curve in $\mathbb{G}(r,n)$ through the blown-up point $p\in \mathbb{G}(r,n)$, and let $D_{b} = H+bE$ be an effective $\mathbb{Q}$-divisor in $\mathbb{G}(r,n)_1$ with $-(m+1)\leq b < -m$. We have $-1\leq D_{b}\cdot \widetilde{C} <0$. Therefore, $\widetilde{R}_{m}\subset \textbf{B}(D_{b})$, where $\widetilde{R}_{m}$ is the strict transform of $R_m\subset \mathbb{G}(r,n)$ in $\mathbb{G}(r,n)_1$. On the other hand, the map induced by $H-(m+1)E$ is the restriction to $\mathbb{G}(r,n)_1$ of linear projection $\mathbb{P}^N\dasharrow\mathbb{P}^{N_m}$ with center $T^m_p\G(r,n)$, where we think $\G(r,n)\subset\mathbb{P}^N$ in its Pl\"ucker embedding . Hence the indeterminacy locus of the map induced by $H-(m+1)E$ is given by the strict transform of $T^m_p\G(r,n)\cap \G(r,n)$, and Lemma \ref{lemmaBS} yields that this strict transform is $\widetilde{R}_m$. Hence $\textbf{B}(D_{-m-1})\subset \widetilde{R}_m$ holds. Therefore (\ref{sbl}) yields that
$$\textbf{B}(D_{b}) = \widetilde{R}_m$$
for any $-(m+1)\leq b< -m$. 

This argument, Corollary \ref{dimBS} and (\ref{sbl}) give the claim on the movable cone and imply that the divisors $E,H,H-E,\dots,H-(r+1)E$ are the walls of the stable base locus decomposition of $\Eff(\G(r,n)_1)$. 
\end{proof}

\begin{Question}\label{q1}
Do the divisors $E,H,H-E,\dots,H-(r+1)E$ give the walls of the Mori chamber decomposition of $\G(r,n)_1$ as well? 
\end{Question}

Finally, we are ready to prove Theorem \ref{main2}.

\begin{proof}[{Proof of Theorem \ref{main2}}]
The case $r=0$ follows from the fact that $\Bl_{p_1,\dots,p_k}(\P^n)$ is toric if $k\leq n+1$,
and that $\Aut(\P^n,p_1,\dots,p_{n+2})=\{Id\}$. Now, assume $r\!\geq \!1$.

Regarding the spherical $\G(r,n)_k$'s, the case $k=1$ is in Proposition \ref{G(r,n)1 spherical}, the case $k=2$ follows 
from Corollary \ref{corollary k=2 c=1}, and the case $k\geq 3$ from Corollary \ref{corollary k>=3 c=1}. The effective cones were computed in Section \ref{sGbu}.
The statement on the movable cone of $\G(r,n)_1$, and the stable base locus decomposition of $\Eff(\G(r,n)_1)$ follow from Theorem \ref{thmSBLD}. The equality $\NE(\G(r,n)_1)=\left\langle e,h-e\right\rangle$ follows from Lemma \ref{conecurvesgeneral}. Then $\Nef(\G(r,n)_1) = \NE(\G(r,n)_1)^{\vee} = \left\langle H,H-E\right\rangle$. 

Finally, by \cite[Theorem 2.2]{BDPP13} we have that the cone of moving curves is dual to the effective cone, that is $\mov(\G(r,n)_1) = \Eff(\G(r,n)_1)^{\vee} = \left\langle h,(r+1)h-e\right\rangle$.
\end{proof}

\bibliographystyle{amsalpha}
\bibliography{Biblio}

\newcommand{\etalchar}[1]{$^{#1}$}
\providecommand{\bysame}{\leavevmode\hbox to3em{\hrulefill}\thinspace}
\providecommand{\MR}{\relax\ifhmode\unskip\space\fi MR }
\providecommand{\MRhref}[2]{%
  \href{http://www.ams.org/mathscinet-getitem?mr=#1}{#2}
}
\providecommand{\href}[2]{#2}
\begin{thebibliography}{BCHM10}

\bibitem[AC17]{AC16}
C.~Araujo and C.~Casagrande, \emph{On the {F}ano variety of linear spaces
  contained in two odd-dimensional quadrics}, Geom. Topol. \textbf{21} (2017),
  no.~5, 3009--3045. \MR{3687113}

\bibitem[ADHL15]{ADHL15}
I.~Arzhantsev, U.~Derenthal, J.~Hausen, and A.~Laface, \emph{Cox rings},
  Cambridge Studies in Advanced Mathematics, vol. 144, Cambridge University
  Press, Cambridge, 2015. \MR{3307753}

\bibitem[AGL16]{AGL16}
M.~Artebani, A.~Garbagnati, and A.~Laface, \emph{Cox rings of extremal rational
  elliptic surfaces}, Trans. Amer. Math. Soc. \textbf{368} (2016), no.~3,
  1735--1757. \MR{3449224}

\bibitem[AHL10]{AHL10}
M.~Artebani, J.~Hausen, and A.~Laface, \emph{On {C}ox rings of {K}3 surfaces},
  Compos. Math. \textbf{146} (2010), no.~4, 964--998. \MR{2660680}

\bibitem[AL11]{AL11}
M.~Artebani and A.~Laface, \emph{Cox rings of surfaces and the anticanonical
  {I}itaka dimension}, Adv. Math. \textbf{226} (2011), no.~6, 5252--5267.
  \MR{2775900}

\bibitem[AM16]{AM16}
C.~Araujo and A.~Massarenti, \emph{Explicit log {F}ano structures on blow-ups
  of projective spaces}, Proc. Lond. Math. Soc. (3) \textbf{113} (2016), no.~4,
  445--473. \MR{3556488}

\bibitem[AMR16]{AMR16}
C.~Araujo, A.~Massarenti, and R.~Rischter, \emph{On non-secant defectivity of
  {S}egre-{V}eronese varieties}, to appear in Transactions of the American
  Mathematical Society, \url{https://arxiv.org/abs/1611.01674}, 2016.

\bibitem[Bau91]{Ba91}
S.~Bauer, \emph{Parabolic bundles, elliptic surfaces and {${\rm
  SU}(2)$}-representation spaces of genus zero {F}uchsian groups}, Math. Ann.
  \textbf{290} (1991), no.~3, 509--526. \MR{1116235}

\bibitem[BCHM10]{BCHM10}
C.~Birkar, P.~Cascini, C.~D. Hacon, and J.~McKernan, \emph{Existence of minimal
  models for varieties of log general type}, J. Amer. Math. Soc. \textbf{23}
  (2010), no.~2, 405--468. \MR{2601039}

\bibitem[BDPP13]{BDPP13}
S.~Boucksom, J-P. Demailly, M.~P\v{a}un, and T.~Peternell, \emph{The
  pseudo-effective cone of a compact {K}\"ahler manifold and varieties of
  negative {K}odaira dimension}, J. Algebraic Geom. \textbf{22} (2013), no.~2,
  201--248. \MR{3019449}

\bibitem[BL12]{BL12}
J.~Blanc and S.~Lamy, \emph{Weak {F}ano threefolds obtained by blowing-up a
  space curve and construction of {S}arkisov links}, Proc. Lond. Math. Soc. (3)
  \textbf{105} (2012), no.~5, 1047--1075. \MR{2997046}

\bibitem[Bla56]{Bl56}
A.~Blanchard, \emph{Sur les vari\'et\'es analytiques complexes}, Ann. Sci.
  Ecole Norm. Sup. (3) \textbf{73} (1956), 157--202. \MR{0087184}

\bibitem[BM17]{BM17}
M.~Bolognesi and A.~Massarenti, \emph{A variation on a conjecture of {F}aber
  and {F}ulton}, \url{https://arxiv.org/abs/1702.00068}, 2017.

\bibitem[Bri93]{Br93}
M.~Brion, \emph{Vari\'et\'es sph\'eriques et th\'eorie de {M}ori}, Duke Math.
  J. \textbf{72} (1993), no.~2, 369--404. \MR{1248677}

\bibitem[Bri07]{Br07}
\bysame, \emph{The total coordinate ring of a wonderful variety}, J. Algebra
  \textbf{313} (2007), no.~1, 61--99. \MR{2326138}

\bibitem[Bri11]{Br11}
\bysame, \emph{On automorphism groups of fiber bundles}, Publ. Mat. Urug.
  \textbf{12} (2011), 39--66. \MR{3012239}

\bibitem[CC02]{CC02}
L.~Chiantini and C.~Ciliberto, \emph{Weakly defective varieties}, Trans. Amer.
  Math. Soc. \textbf{354} (2002), no.~1, 151--178. \MR{1859030}

\bibitem[CdFG17]{CdFG17}
I.~Coskun, T.~de~Fernex, and A.~Gibney, \emph{Surveys on recent developments in
  algebraic geometry:}, Proceedings of Symposia in Pure Mathematics, American
  Mathematical Society, 2017.

\bibitem[Cho49]{Ch49}
W-L. Chow, \emph{On the geometry of algebraic homogeneous spaces}, Ann. of
  Math. (2) \textbf{50} (1949), 32--67. \MR{0028057}

\bibitem[CT06]{CT06}
A-M. Castravet and J.~Tevelev, \emph{Hilbert's 14th problem and {C}ox rings},
  Compos. Math. \textbf{142} (2006), no.~6, 1479--1498. \MR{2278756}

\bibitem[Deb01]{De01}
O.~Debarre, \emph{Higher-dimensional algebraic geometry}, Universitext,
  Springer New York, 2001.

\bibitem[DHH{\etalchar{+}}15]{DHHKL15}
U.~Derenthal, J.~Hausen, A.~Heim, S.~Keicher, and A.~Laface, \emph{Cox rings of
  cubic surfaces and {F}ano threefolds}, J. Algebra \textbf{436} (2015),
  228--276. \MR{3348473}

\bibitem[Fuj82a]{fuj82a}
T.~Fujita, \emph{Classification of projective varieties of {$\Delta $}-genus
  one}, Proc. Japan Acad. Ser. A Math. Sci. \textbf{58} (1982), no.~3,
  113--116. \MR{664549}

\bibitem[Fuj82b]{fuj82b}
\bysame, \emph{On polarized varieties of small {$\Delta $}-genera}, Tohoku
  Math. J. (2) \textbf{34} (1982), no.~3, 319--341. \MR{676113}

\bibitem[Har77]{Ha77}
R.~Hartshorne, \emph{Algebraic geometry}, Springer-Verlag, New York-Heidelberg,
  1977, Graduate Texts in Mathematics, No. 52. \MR{0463157}

\bibitem[Har95]{Ha95}
J.~Harris, \emph{Algebraic geometry}, Graduate Texts in Mathematics, vol. 133,
  Springer-Verlag, New York, 1995, A first course, Corrected reprint of the
  1992 original. \MR{1416564}

\bibitem[HK00]{HK00}
Y.~Hu and S.~Keel, \emph{Mori dream spaces and {GIT}}, Michigan Math. J.
  \textbf{48} (2000), 331--348, Dedicated to William Fulton on the occasion of
  his 60th birthday. \MR{1786494}

\bibitem[HKL16]{HKL16}
J.~Hausen, S.~Keicher, and A.~Laface, \emph{Computing {C}ox rings}, Math. Comp.
  \textbf{85} (2016), no.~297, 467--502. \MR{3404458}

\bibitem[KO73]{KO73}
S.~Kobayashi and T.~Ochiai, \emph{Characterizations of complex projective
  spaces and hyperquadrics}, J. Math. Kyoto Univ. \textbf{13} (1973), 31--47.
  \MR{0316745}

\bibitem[Kop16]{Ko16}
J.~Kopper, \emph{Effective cycles on blow-ups of {G}rassmannians}, to appear in
  Journal of Pure and Applied Algebra, \url{https://arxiv.org/abs/1612.01906},
  2016.

\bibitem[Laz04a]{La04}
R.~Lazarsfeld, \emph{Positivity in algebraic geometry. {I}}, Ergebnisse der
  Mathematik und ihrer Grenzgebiete. 3. Folge. A Series of Modern Surveys in
  Mathematics [Results in Mathematics and Related Areas. 3rd Series. A Series
  of Modern Surveys in Mathematics], vol.~48, Springer-Verlag, Berlin, 2004,
  Classical setting: line bundles and linear series. \MR{2095471}

\bibitem[Laz04b]{La04II}
\bysame, \emph{Positivity in algebraic geometry. {II}}, Ergebnisse der
  Mathematik und ihrer Grenzgebiete. 3. Folge. A Series of Modern Surveys in
  Mathematics [Results in Mathematics and Related Areas. 3rd Series. A Series
  of Modern Surveys in Mathematics], vol.~49, Springer-Verlag, Berlin, 2004,
  Positivity for vector bundles, and multiplier ideals. \MR{2095472}

\bibitem[LTU15]{LTU15}
A.~Laface, A.~L. Tironi, and L.~Ugaglia, \emph{On del pezzo elliptic varieties
  of degree $\leq 4$}, \url{https://arxiv.org/abs/1509.09220}, 2015.

\bibitem[Mas16]{Ma16}
A.~Massarenti, \emph{Generalized varieties of sums of powers}, Bull. Braz.
  Math. Soc. (N.S.) \textbf{47} (2016), no.~3, 911--934. \MR{3549076}

\bibitem[MM13]{MM13}
A.~Massarenti and M.~Mella, \emph{Birational aspects of the geometry of
  varieties of sums of powers}, Adv. Math. \textbf{243} (2013), 187--202.
  \MR{3062744}

\bibitem[Mor79]{Mo79}
S.~Mori, \emph{Projective manifolds with ample tangent bundles}, Ann. of Math.
  (2) \textbf{110} (1979), no.~3, 593--606. \MR{554387}

\bibitem[MR16]{MR16}
A.~Massarenti and R.~Rischter, \emph{Non-secant defectivity via osculating
  projections}, to appear in Annali della Scuola Normale Superiore di Pisa,
  Classe di Scienze, \url{https://arxiv.org/abs/1610.09332}, 2016.

\bibitem[Muk01]{Mu01}
S.~Mukai, \emph{Counterexample to hilbert's fourteenth problem for the
  3-dimensional additive group}, Technical report, Kyoto University, Research
  Institute for Mathematical Sciences, 2001.

\bibitem[Oka16]{Ok16}
S.~Okawa, \emph{On images of {M}ori dream spaces}, Math. Ann. \textbf{364}
  (2016), no.~3-4, 1315--1342. \MR{3466868}

\bibitem[Per14]{Pe14}
N.~Perrin, \emph{On the geometry of spherical varieties}, Transform. Groups
  \textbf{19} (2014), no.~1, 171--223. \MR{3177371}

\bibitem[Ris17]{Ri17}
R.~Rischter, \emph{Projective and birational geometry of grassmannians and
  other special varieties}, PhD Thesis, \url{https://arxiv.org/abs/1705.05673},
  2017.

\end{thebibliography}
\end{document}